\newtheorem{theorem}{Theorem}[section]
\newtheorem{lemma}[theorem]{Lemma}
\newtheorem{proposition}[theorem]{Proposition}
\newtheorem{corollary}[theorem]{Corollary}
\theoremstyle{definition}
\newtheorem{definition}[theorem]{Definition}
\newtheorem{remark}[theorem]{Remark}
\newtheorem{example}[theorem]{Example}
\newtheorem{question}[theorem]{Question}
\newcommand{\outball}{\overrightarrow{\mathcal{B}}}
\newtheorem*{lem_dual}{Lemma \ref{lemma_bookkeeping}'}
\newcommand{\lb}{\langle}
\newcommand{\rb}{\rangle}
\newcommand{\A}{\mathcal{A}}
\newcommand{\gl}{\mathscr{L}}
\newcommand{\gr}{\mathscr{R}}
\newcommand{\gh}{\mathscr{H}}
\newcommand{\defterm}[1]{\textsl{#1}}
\newcommand{\nset}{\mathbb{N}}
\newcommand{\zset}{\mathbb{Z}}
\newcommand{\emptyword}{\varepsilon}
\newcommand{\rel}[1]{\mathscr{#1}}
\newcommand{\elt}[1]{\overline{#1}}
\newcommand{\dotcup}{\ensuremath{\mathaccent\cdot\cup}}
\begin{document}
\title[Green index in semigroups]{Green index in semigroups: generators, presentations and automatic structures}
\subjclass[2000]{20M05}


\maketitle

\begin{center}

    ALAN J. CAIN\footnote{Supported by the
project PTDC/MAT/69514/2006 ‘Semigroups and Languages’, funded by FCT
and PIDDAC.}

    \medskip

    Centro de Matem\'{a}tica da Universidade do Porto,\\ Rua do Campo Alegre 687, \\
    4169-007 Porto, Portugal.

    \medskip

    \texttt{a.j.cain@gmail.com}

    \bigskip

    \bigskip

    ROBERT GRAY\footnote{Supported by an EPSRC Postdoctoral Fellowship.}, \quad NIK RUSKUC

    \medskip

    School of Mathematics and Statistics,\ University of St Andrews, \\
    St Andrews KY16 9SS, Scotland.

    \medskip

    \texttt{robertg@mcs.st-and.ac.uk}, \quad \texttt{nik@mcs.st-and.ac.uk} \\
\end{center}

\begin{abstract} 
Let $S$ be a semigroup and let $T$ be a subsemigroup of $S$. 
Then $T$ acts on $S$ by left- and by right multiplication.
This gives rise to a partition of the complement $S \setminus T$ and to each equivalence class of this partition we naturally associate a relative Sch\"{u}tzenberger group.  
We show how generating sets for $S$ may be used to obtain generating sets for $T$ and the Sch\"{u}tzenberger groups, and vice versa. 
We also give a method for constructing a presentation for $S$ from given presentations of $T$ and the  Sch\"{u}tzenberger groups. 
These results are then used to show that several important properties are preserved when passing to finite Green index subsemigroups or extensions, including: finite generation, solubility of the word problem, growth type, automaticity, finite presentability (for extensions) and 
finite Malcev presentability (in the case of group-embeddable semigroups). These results provide common generalisations of several classical results from group theory and Rees index results from semigroup theory. 
\end{abstract}

\section{Introduction}
\label{sec_intro}
The notion of the index of a subgroup is a fundamental concept in group theory. It may be viewed as providing a way of measuring the size of a subgroup relative to its containing group. From this point of view, a subgroup of finite index may be thought of as differing from its parent by only a finite amount. This intuitive idea gains more significance when one considers the long list of properties that are known to be preserved when passing to finite index subgroups or extensions, which include: finite generation and presentability (and more generally property ${\rm F}_n$ for every $(n\geq 1)$), solubility of the word problem, automaticity, the homological finiteness property ${\rm FP}_n$, 
residual finiteness, periodicity, and local-finiteness (see \cite{Brown1982,delaHarpe2000,epstein_wordproc,lyndon_cgt,Magnus1} for details of these classical results). 
On the other hand, it is still an open question as to whether the property of being presented by a finite complete rewriting system is inherited by subgroups of finite index; see \cite{Pride2000}. 
Important problems about finite index subgroups and extensions continue to receive a great deal of attention; see for example \cite{Behrstock1,Burillo1,Haglund1,Katsuya1,Nikolaev1,Nikolov2003,Nikolov1,Nikolov2007(1),Nikolov2007(2)}. 

In semigroup theory various notions of index have arisen, in several different contexts. For example, the index of a subgroup of a semigroup was considered by Bergman in \cite{Bergman}, while a notion of index for cancellative semigroups arose in work of Grigorchuk \cite{Grigorchuk1} on growth of semigroups. While these are direct generalisations of group index, they are limited since they do not apply to semigroups in general. 

For subsemigroups of semigroups in general, until recently, the most widely studied notion of index 
has been the so-called Rees index. The \emph{Rees index} of a subsemigroup is defined simply as the cardinality of its complement. It was originally introduced by Jura in \cite{Jura1}, and since then, in 
analogy with group index, many finiteness conditions have now been shown to be inherited when passing to finite Rees index substructures or extensions (see \cite{Campbell1996}, \cite{Ruskuc1} and \cite{hoffmann_autofinrees}). 

However, Rees index is not a generalisation of group index. 
In fact, it is obvious that an infinite group cannot have any proper subgroups of finite Rees index. So although Rees index results have the same look and feel as group index results, this is as far as the connection goes, 
and in particular they cannot be applied to recover the corresponding group-theoretic results on which they are modelled.   
This problem was the original motivation for the work in \cite{gray_green1} where a new notion of index was introduced, called \emph{Green index}.
The Green index of a subsemigroup $T$ of a semigroup $S$ is given by counting strong orbits (called $T$-relative $\gh$-classes) in the complement  $S \setminus T$ under the natural actions of $T$ on $S$ via right and left multiplication (see Section~2 for more details). In particular, when $S \setminus T$ is finite $T$ will have finite Green index in $S$, while if $S$ is a group and $T$ a subgroup then $T$ will have finite Green index in $S$ if and only if it has finite group index in $S$. In \cite{gray_green1} it was shown that several important finiteness conditions are preserved when taking finite Green index subsemigroups or extensions. Thus, Green index is both general enough to simultaneously subsume Rees index and group index, but also strong enough that a semigroup will share many interesting properties with its subsemigroups of finite Green index. In this paper we continue the investigation of Green index, and in particular extend the list of finiteness conditions that are known to be preserved under taking finite Green index subsemigroups and extensions.

Extending the classical ideas of Sch\"{u}tzenberger \cite{Schutzenberger1957,Schutzenberger1958}, 
with each $T$-relative $\gh$-class $H$ we associate a group $\Gamma(H)$, called its ($T$-\emph{relative}) \emph{Sch\"{u}tzenberger group}, obtained by taking the setwise stabiliser of the action of $T$ on $H$ by right multiplication and making it faithful (see Section~2 for full details). Our results then relate properties of $S$, $T$ and the family of (relative) Sch\"{u}tzenberger groups $\Gamma(H)$.

The article is laid out as follows. After the preliminaries in Section \ref{sec_prelims}, in Section \ref{sec_rewriting} we prove a fundamental lemma (the Rewriting Lemma) which underpins many of the results appearing later in the paper.  
This rewriting technique   is utilised in Section \ref{secsubsemigroups} to
obtain a generating set for $T$ from a generating set for $S$.
In Section \ref{secschgps} we obtain generating sets for the relative
Sch\"{u}tzenberger groups from a generating set for $S$.
In the case of finite Green index, finite generation is preserved in both these situations. 
In Section \ref{sec_presentations} we give a presentation for $S$ in terms of given presentations for $T$ and each of the Sch\"{u}tzenberger groups. Again, when the Green index is finite, finite presentability is preserved. 
Whether finite presentability is preserved in the other direction, i.e. from $S$ to $T$ and the Sch\"{u}tzenberger groups, remains an open problem, but in Section \ref{secmalcev}
we show that this is the case for finite Malcev (group-embeddable) presentations (in the sense of \cite{c_survey}).
In the remaining sections we consider a range of other properties related to generators in one way or another:
the word problem (Section \ref{secwp}), type of growth (Section \ref{secgrowth}),
and automaticity (Section \ref{secautomatic}) in the sense of \cite{campbell_autsg,hoffmann_relatives}.
These results provide common generalisations of the corresponding classical results from group theory, and Rees index results from semigroup theory. 

\section{Preliminaries}
\label{sec_prelims}

Let $S$ be a semigroup and let $T$ be a subsemigroup of $S$. 
We use $S^1$ to denote the semigroup $S$ with an identity element $1 \not\in S$ adjoined to it. This notation will be extended to subsets of $S$, i.e. $X^1 = X \cup \{ 1 \}$. For $u,v \in S$ define:
\[
u \gr^T v  \ \Leftrightarrow  \ uT^1 = vT^1, \quad u \gl^T v \  \Leftrightarrow \  T^1u = T^1v,
\]
and $\gh^T = \gr^T \cap \gl^T$. Each of these relations is an equivalence relation on $S$; their equivalence classes are called the
($T$-)\emph{relative}  $\gr$-, $\gl$-, and $\gh$-classes, respectively.
Furthermore, these relations respect $T$, in the sense that each
$\gr^T$-, $\gl^T$-, and $\gh^T$-class lies either wholly
in $T$ or wholly in $S \setminus T$. Following \cite{gray_green1} we define the \defterm{Green index} of $T$
in $S$ to be one more than the number of relative $\gh$-classes in $S \setminus T$. Relative Green's relations were introduced by Wallace in \cite{Wallace} generalising the the fundamental work of Green \cite{Green1951}. For more on the classical theory of Green's relations, and other basic concepts from semigroup theory, we refer the reader to \cite{howie_fundamentals}.

Throughout this paper $S$ will be a semigroup, $T$
will be a subsemigroup of $S$, and Green's relations in $S$ will
always be taken relative to $T$, unless otherwise stated. In other words, we shall write $x \gr y$ to mean that $xT^1 = yT^1$ rather than $xS^1 = yS^1$. On the few occasions that we need to refer to Green's $\gr$ relation in $S$ we will write $\gr^S$. The same goes for the relations $\gl$ and $\gh$.

The following result summarises some basic facts about relative Green's relations (see \cite{Wallace,gray_green1} for details).

\begin{proposition}\label{basicproperties}
Let $S$ be a semigroup and let $T$ be a subsemigroup of $S$.
\begin{enumerate}[(i)]
\item
\label{prop_LandRcong}
The relative Green's relation $\gr$ is a left congruence on $S$, and $\gl$ is a right congruence.
\item
\label{prop_{Rel_Greens_Lemma}}
Let $u,v \in S$  with $u \gr v$, and let $p, q \in T$ such that $up = v$ and $vq = u$. Then the mapping $\rho_p$ given by $x \mapsto xp$ is an $\gr$-class preserving bijection from $L_u$ to $L_v$, the mapping $\rho_q$ given by $x \mapsto xq$ is an $\gr$-class preserving bijection from $L_v$ to $L_u$, and $\rho_p$ and $\rho_q$ are mutually inverse.
\end{enumerate}
\end{proposition}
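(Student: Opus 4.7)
The plan is to mimic the classical proof of Green's Lemma, being careful throughout to use $T^1$ in place of $S^1$ and to track how the subsemigroup $T$ acts.

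For part (i), I would argue directly from the definition. If $u \gr v$, then $uT^1 = vT^1$. Left-multiplying both sides by any $s \in S$ gives $suT^1 = svT^1$, so $su \gr sv$. Hence $\gr$ is a left congruence on $S$. The argument that $\gl$ is a right congruence is entirely symmetric (right-multiply $T^1 u = T^1 v$ by any $s \in S$).

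For part (ii), I would first check that $\rho_p$ sends $L_u$ into $L_v$. If $x \in L_u$, then $T^1 x = T^1 u$, and so $T^1(xp) = (T^1 x)p = (T^1 u)p = T^1(up) = T^1 v$, giving $xp \in L_v$. Analogously, $\rho_q$ sends $L_v$ into $L_u$. Next, I would verify that $\rho_p$ and $\rho_q$ are mutually inverse. Note that $upq = (up)q = vq = u$, and symmetrically $vqp = v$. Now for any $x \in L_u$, since $x \in T^1 x = T^1 u$, we may write $x = tu$ for some $t \in T^1$; then
\[
(xp)q = tupq = t(upq) = tu = x,
\]
so $\rho_q \circ \rho_p$ is the identity on $L_u$. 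The same computation with the roles reversed shows $\rho_p \circ \rho_q$ is the identity on $L_v$, so both maps are bijections.

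It remains to show that $\rho_p$ preserves $\gr$-classes. For $x \in L_u$, the identity $xpq = x$ (established above) yields $xT^1 = (xp)qT^1 \subseteq xpT^1$, while the reverse inclusion $xpT^1 \subseteq xT^1$ is immediate from $p \in T \subseteq T^1$. Hence $xT^1 = xpT^1$, i.e., $x \gr xp = \rho_p(x)$, so $\rho_p$ maps each $\gr$-class inside $L_u$ into an $\gr$-class inside $L_v$ (and by the bijectivity established above, actually onto such a class). The same holds for $\rho_q$. The only real subtlety to keep straight is the distinction between $T$ and $T^1$: we are given $p,q \in T$ but the defining relation for $\gr$ uses $T^1$, and the expression $x = tu$ needs $t \in T^1$ (allowing $t = 1$ when $x = u$). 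Once this is handled, the argument is a direct transcription of the classical Green's Lemma proof.
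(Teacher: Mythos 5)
Your proof is correct, and it is exactly the standard relativization of the classical proof of Green's Lemma: the paper itself does not spell out a proof of Proposition~\ref{basicproperties}, deferring instead to Wallace and to Gray--Ru\v{s}kuc, and the argument those references use is the one you give. The only point worth stressing is the one you already flag: that ``$\gr$-class preserving'' is established in the strong form $x \gr \rho_p(x)$ for each $x \in L_u$ (via $xpq = x$ to get $xT^1 \subseteq xpT^1$ and $p \in T$ for the reverse inclusion), from which the weaker statement that $\rho_p$ carries whole $\gr$-classes onto $\gr$-classes follows immediately. Your bookkeeping of $T$ versus $T^1$ (in particular writing $x = tu$ with $t \in T^1$ to allow $t = 1$) is exactly the care the relative setting demands.
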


\begin{sloppypar}
With each relative $\gh$-class we may associate a group, which we call the \emph{Sch\"{u}tzenberger group} of the $\gh$-class.
This is done by extending, in the obvious way, the classical definition (introduced by Sch\"{u}tzenberger in \cite{Schutzenberger1957,Schutzenberger1958}) to the relative case.
For each $T$-relative $\gh$-class $H$ let
$\mathrm{Stab}(H) = \{ t \in T^1 : Ht = H \}$
(the \emph{stabilizer} of $H$ in $T$), and
define an equivalence $\gamma=\gamma(H)$ on $\mathrm{Stab}(H)$ by $(x,y) \in \gamma$ if and only if $hx = hy$
for all $h\in H$. Then $\gamma$ is a congruence on $\mathrm{Stab}(H)$ and $\mathrm{Stab}(H) / \gamma$ is a group.
The group $\Gamma(H)=\mathrm{Stab}(H) / \gamma$ is called the relative \emph{Sch\"{u}tzenberger group} of $H$.
The following basic observations about relative Sch\"{u}tzenberger groups will be needed (see \cite{Wallace,gray_green1} for details).
\end{sloppypar}

\begin{proposition}
\label{schutzstabproperties}
Let $S$ be a semigroup, let $T$ be a subsemigroup of $S$, let $H$ be a relative 
$\gh$-class of $S$, and let 
$h \in H$ be an arbitrary element. Then:
\begin{enumerate}[(i)]
\item
\label{item-sch1}
$\mathrm{Stab}(H) = \{ t \in T^1 : ht \in H  \}$.
\item 
\label{item-sch2}
$\gamma(H) = \{ (u,v) \in \mathrm{Stab}(H) \times \mathrm{Stab}(H) : hu = hv  \}$.
\item
\label{item-sch3}
$H = h \mathrm{Stab}(H)$.
\item
\label{item-sch4}
If $H'$ is an $\gh$-class belonging to the same $\gl$-class of $S$ as $H$ then $\mathrm{Stab}(H) = \mathrm{Stab}(H')$ and 
$\Gamma(H) = \Gamma(H')$. 
\item 
\label{item-sch6}
If $H'$ is an $\gh$-class of $S$ belonging to the same $\gr$-class as $H$ then $\Gamma(H') \cong \Gamma(H)$.
\end{enumerate}
\end{proposition}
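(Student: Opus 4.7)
The plan is to establish each of the five parts largely by unpacking definitions and applying the Relative Green's Lemma (Proposition~\ref{basicproperties}(ii)) together with its left-right dual, which is available by symmetry.

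For (i), the forward inclusion follows immediately from $Ht = H$. For the converse, if $ht \in H$ then $ht \gh h$. If $t = 1$ the claim is trivial; otherwise $t \in T$ realises $h\cdot t = ht$, and choosing $q\in T^1$ with $ht\cdot q = h$ (available since $h \gr ht$), Proposition~\ref{basicproperties}(ii) tells us that $\rho_t : L_h \to L_{ht}$ is an $\gr$-class preserving bijection. Since $h \gl ht$, the codomain equals $L_h$, and restricting to the $\gr$-class $R_h$ yields a bijection of $H = R_h \cap L_h$ onto itself, i.e.\ $Ht = H$. For (ii), the inclusion $(\subseteq)$ is by definition. For $(\supseteq)$, suppose $hu = hv$; any $h' \in H$ satisfies $h' \gl h$, so $h' = sh$ for some $s \in T^1$, whence $h'u = shu = shv = h'v$, proving $(u,v) \in \gamma(H)$. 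For (iii), the inclusion $h\,\mathrm{Stab}(H) \subseteq H$ is immediate. Conversely, given $h' \in H$, the relation $h' \gr h$ supplies $t \in T^1$ with $h' = ht$; part (i) then forces $t \in \mathrm{Stab}(H)$.

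For (iv), the dual form of Proposition~\ref{basicproperties}(ii) for $\gl$ gives elements $p, q \in T^1$ with $ph = h'$ and $qh' = h$, and yields a bijection $\lambda_p : R_h \to R_{h'}$ preserving $\gl$-classes, which restricts to a bijection $H \to H'$. For $\mathrm{Stab}(H) \subseteq \mathrm{Stab}(H')$: if $t \in \mathrm{Stab}(H)$ then $ht \in H$, so $h't = pht \in pH = H'$, and by part (i) $t \in \mathrm{Stab}(H')$; symmetry gives equality. Equality of $\gamma$: using characterisation (ii), $hu = hv$ implies $h'u = phu = phv = h'v$, and conversely $h'u = h'v$ gives $hu = qh'u = qh'v = hv$. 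Thus $\gamma(H) = \gamma(H')$ and the two quotient groups coincide.

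Part (v) is the only place where one must build a genuine isomorphism rather than verify equality. Fixing $p, q \in T^1$ with $hp = h'$ and $h'q = h$, Proposition~\ref{basicproperties}(ii) provides a bijection $\rho_p : H \to H'$ with inverse $\rho_q$. I would define $\phi : \mathrm{Stab}(H) \to \mathrm{Stab}(H')$ by $\phi(t) = qtp$, verifying first that $qtp \in \mathrm{Stab}(H')$ via $h'(qtp) = (ht)p \in Hp = H'$ together with part (i). Then I would check that $\phi$ descends to a well-defined map $\bar\phi : \Gamma(H) \to \Gamma(H')$ by showing $(t_1,t_2) \in \gamma(H)$ implies $(qt_1p, qt_2p) \in \gamma(H')$, again using (ii) and the mutual inversion $\rho_p \rho_q = \mathrm{id}_H$. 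The symmetric map $t' \mapsto pt'q$ furnishes a two-sided inverse modulo $\gamma$, and a short computation $qtp \cdot qt'p = qt(pq)t'p$ combined with the fact that $pq \in \mathrm{Stab}(H)$ acts trivially on $H$ (so is congruent to $1$ modulo $\gamma(H)$) confirms multiplicativity.

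The only real obstacle is the bookkeeping in (v): one must be careful that $pq$ and $qp$ are not themselves identity elements, only that they act as the identity on $H$ and $H'$ respectively, which is precisely what forces the passage to the quotient $\Gamma$. Once this is handled, everything else is a routine translation between the characterisations in parts (i)--(iii) and the classical Green-lemma bijections.
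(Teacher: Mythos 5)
The paper does not prove this proposition: it is stated as a collection of ``basic observations'' with a pointer to the references \cite{Wallace,gray_green1}, so there is no in-paper argument to compare against. Your proof is nevertheless correct and is the standard one. Parts (i)--(iii) unwind the definitions together with Proposition~\ref{basicproperties}\ref{prop_{Rel_Greens_Lemma}}; part (iv) exploits that $\gamma(H)$ is determined by the action on a single element of $H$ via your characterisation (ii), which makes the equality (not merely isomorphism) of the two quotients transparent; and part (v) is the usual conjugation-by-$(q,\cdot,p)$ argument, with well-definedness and multiplicativity on the quotient both reducing to the fact that $\rho_p$ and $\rho_q$ are mutually inverse on the $\gh$-classes (equivalently, that $pq$ and $qp$ lie in the stabilisers and act as the identity there, hence are $\gamma$-congruent to $1$). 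The one point worth flagging is cosmetic: Proposition~\ref{basicproperties}\ref{prop_{Rel_Greens_Lemma}} as stated takes $p,q\in T$, whereas in part (i) your $q$, and in parts (iv)--(v) your $p,q$, may a priori be $1\in T^1$; those degenerate cases are trivial (e.g.\ $ht=h$ already forces $Ht=H$ by left-multiplying by elements of $T^1$ witnessing $\gl$), so nothing breaks, but a sentence acknowledging them would tighten the write-up.
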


\section{The Rewriting Lemma}
\label{sec_rewriting}

The aim of this section is to prove a rewriting lemma which arises naturally from the theory of relative Green's relations, and which will be a vital tool for the proofs of many of the results about finiteness conditions that follow.

Throughout this section $S$ will be a semigroup and $T$ will be a subsemigroup of $S$.
We let $\{ H_i : i \in I \}$ be the set of relative $\gh$-classes in $S \setminus T$, with a fixed set of representatives $h_i \in H_i \ (i \in I)$, and relative Sch\"{u}tzenberger groups $\Gamma_i = \Gamma(H_i) = \mathrm{Stab}_T (H_i) / \gamma_i$. Set $I^1 = I \cup \{ 1 \}$ where we assume $1 \not\in I$. We introduce the convention $H_1 = \{ 1 \}$ and $h_1 = 1$ where $1$ is the external identity adjoined to $S$.

Next we introduce two mappings
\[
\rho: S^1 \times I^1 \rightarrow I^1, \quad \quad \lambda: I^1 \times S^1 \rightarrow I^1
\]
which reflect the way that the elements of $S^1$ act on the representatives $h_i$:

\begin{eqnarray}
\rho(s,i) & = & \begin{cases}
j & \mbox{if $sh_i \in H_j$} \\
1 & \mbox{if $sh_i \in T$},
\end{cases} \label{(1.4)}
\end{eqnarray}
and
\begin{eqnarray}
\lambda(i,s) & = & \begin{cases}
j & \mbox{if $h_i s \in H_j$} \\
1 & \mbox{if $h_i s \in T$}.
\end{cases} \label{(1.5)}
\end{eqnarray}

The following lemma introduces related elements $\sigma(s,i)$ and $\tau(i,s)$ which `connect'
$s h_i$ and $h_i s$ to their respective $\gh$-class representatives.

\begin{lemma}\label{lem_sigmaandtau}
For all $i \in I^1$ and $s \in S^1$ there exist $\sigma(s,i), \tau(i,s) \in T^1$ satisfying:
\begin{eqnarray}
s h_i & = & h_{\rho(s,i)} \sigma(s,i),  \label{(1.2)}
\end{eqnarray}
and
\begin{eqnarray}
h_i s & = & \tau(i,s) h_{\lambda(i,s)}. \label{(1.3)}
\end{eqnarray}
\end{lemma}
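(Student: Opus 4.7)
The plan is to prove the existence of $\sigma(s,i)$ and $\tau(i,s)$ by a straightforward case analysis on where the products $sh_i$ and $h_i s$ fall, reading off the required equations directly from the definitions of the relative Green's relations. I will treat the $\sigma$-statement \eqref{(1.2)} in detail, the argument for $\tau$ being completely dual.

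Fix $s \in S^1$ and $i \in I^1$ and consider $sh_i \in S^1$. There are essentially two cases to handle, matching the two branches of the definition \eqref{(1.4)} of $\rho$. In the first case, $sh_i \in T^1$, so $\rho(s,i) = 1$ (subsuming the edge case $s = 1 = h_i$, where $sh_i = 1 \in H_1$ by convention). Since $h_{\rho(s,i)} = h_1 = 1$, equation \eqref{(1.2)} reads $sh_i = \sigma(s,i)$, so we may simply set $\sigma(s,i) = sh_i \in T^1$.

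In the second case, $sh_i \in H_j$ for some $j \in I$, and by definition $\rho(s,i) = j$. Since $sh_i$ and $h_j$ lie in the same $T$-relative $\gh$-class, they are in particular $\gr$-related, so $sh_i T^1 = h_j T^1$. In particular $sh_i \in h_j T^1$, which yields an element $t \in T^1$ with $sh_i = h_j t$. Setting $\sigma(s,i) = t$ gives the required equation.

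The construction of $\tau(i,s)$ proceeds symmetrically: if $h_i s \in T^1$, take $\tau(i,s) = h_i s$; otherwise $h_i s \in H_j$ for some $j \in I$, and the relation $h_i s \gl h_j$ yields $T^1(h_i s) = T^1 h_j$, hence an element $t \in T^1$ with $h_i s = t h_{\lambda(i,s)}$. There is no real obstacle here — the lemma is essentially a bookkeeping statement that unpacks the definitions of $\gr^T$ and $\gl^T$ applied to the chosen class representatives, and its value lies in fixing notation for the rewriting arguments that follow rather than in any substantive difficulty in its proof.
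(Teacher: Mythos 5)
Your proof is correct and follows essentially the same approach as the paper's: split on whether $\rho(s,i)=1$, set $\sigma(s,i)=sh_i$ in the trivial case, and otherwise extract $\sigma(s,i)$ from the fact that $sh_i$ and $h_{\rho(s,i)}$ are $\gh^T$-related (hence $\gr^T$-related), with the $\tau$ case handled dually. You simply unpack the $\gr^T$-relation one step further than the paper does before concluding.
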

\begin{proof}
If $\rho(s,i) \neq 1$ we have $sh_i \gh h_{\rho(s,i)}$ and so there exists $\sigma(s,i) \in T^1$ satisfying
\[
sh_i = h_{\rho (s,i)} \sigma(s,i).
\]
Otherwise $\rho (s,i) = 1$, and setting $\sigma(s,i) = sh_i \in T^1$ equality \eqref{(1.2)} holds trivially.
The existence of $\tau(i,s)$ is proved dually. 
\end{proof}

The following lemma describes the effect of pushing an $\gh$-class representative through a product of elements of $S$ from left to right. 

\begin{lemma}[Rewriting lemma] \label{lemma_bookkeeping}
Let $i \in I^1$ and let $s_1, s_2, \ldots, s_n \in S$. Then
\begin{equation}
h_i s_1 s_2 \ldots s_n = t_1 t_2 \ldots t_n h_j \label{(5)}
\end{equation}
where $t_1, \ldots, t_n \in T^1$ and $j \in I^1$ are obtained as a result of the following recursion:
\begin{alignat}{3}
&i_1 && =  i&\quad& \label{(6)} \\
&i_{k+1} && =  \lambda (i_k, s_k) && (k = 1, \ldots, n), \label{(7)} \\
&j && =  i_{n+1} &&\label{(8)} \\
&t_k && =  \tau(i_k, s_k) && (k = 1, \ldots, n). \label{(9)} 
\end{alignat}
Furthermore:
\begin{enumerate}[(i)]
\item If all $s_q \in T$ and $h_i s_1 s_2 \ldots s_n \not\in T$ then 
$
h_i s_1 s_2 \ldots s_n \gl h_j.
$ 
\item If all $s_q \in T$ and $h_i s_1 s_2 \ldots s_n \in T$ then $j=1$ and so $h_j = h_1 =1$. 
\item If all $s_q \in T$ and $h_i s_1 s_2 \ldots s_n \gr h_i$ then 
$
h_i s_1 s_2 \ldots s_n \gh h_j. 
$
\end{enumerate}
\end{lemma}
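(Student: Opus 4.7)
The plan is to prove the main identity \eqref{(5)} by induction on $n$, and then derive (i)--(iii) from a joint induction on the length $k$ of the initial product. For \eqref{(5)}, the base case $n = 1$ is precisely equation \eqref{(1.3)} of Lemma \ref{lem_sigmaandtau}, with $t_1 = \tau(i_1, s_1)$ and $j = \lambda(i_1, s_1) = i_2$. For the inductive step, I would write $h_i s_1 \cdots s_{n+1} = (h_i s_1 \cdots s_n) s_{n+1}$, apply the hypothesis to the first factor to expose $h_{i_{n+1}}$, apply Lemma \ref{lem_sigmaandtau} once more to rewrite $h_{i_{n+1}} s_{n+1} = \tau(i_{n+1}, s_{n+1}) h_{\lambda(i_{n+1}, s_{n+1})}$, and use associativity to concatenate, matching the recursion for $t_{n+1}$ and $i_{n+2}$.

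For (i)--(iii), I would prove jointly by induction on $k$ (under $s_1, \ldots, s_k \in T$) the combined statement: (a) $h_i s_1 \cdots s_k \in T$ if and only if $i_{k+1} = 1$; (b) if $i_{k+1} \neq 1$ then $h_i s_1 \cdots s_k \gl h_{i_{k+1}}$; (c) if $h_i s_1 \cdots s_k \gr h_i$ then $h_i s_1 \cdots s_k \gh h_{i_{k+1}}$. Clause (a) holds because $T$ is closed under multiplication while $\lambda(1, s) = 1$, so once the running product enters $T$ it stays there and all subsequent indices $i_m$ equal $1$. Clause (b) follows from the defining property $h_{i_k} s_k \gh h_{i_{k+1}}$ of $\lambda$, combined with Proposition \ref{basicproperties}(i) that $\gl$ is a right congruence, applied to the inductive $\gl$-relation $h_i s_1 \cdots s_{k-1} \gl h_{i_k}$. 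For (c), the key observation is that right multiplication by $s \in T$ can only shrink the right $T^1$-orbit: $(xs) T^1 \subseteq x T^1$. Hence the chain $h_i T^1 \supseteq h_i s_1 T^1 \supseteq \cdots \supseteq h_i s_1 \cdots s_n T^1$ must be constant once equality at the endpoints is assumed, so $h_i s_1 \cdots s_k \gr h_i$ holds throughout. Combined with (b), the element $h_i s_1 \cdots s_k$ lies in $R_{h_i} \cap L_{h_{i_{k+1}}}$, which is a single $\gh$-class since $\gh = \gr \cap \gl$.

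The hard part will be identifying this $\gh$-class with $H_{i_{k+1}}$, equivalently showing $h_i \gr h_{i_{k+1}}$ at every step of the recursion. Because $\gr$ is only a left congruence, the step $h_{i_k} s_k = t_k h_{i_{k+1}}$ does not automatically preserve the $\gr$-class under right multiplication by $s_k$. I expect to close this by using the inductive decomposition $h_i s_1 \cdots s_{k-1} = h_{i_k} \sigma$ with $\sigma \in \mathrm{Stab}(H_{i_k})$ (Proposition \ref{schutzstabproperties}(iii)), picking a representative of the group inverse of $\sigma$ in $\Gamma(H_{i_k})$, and invoking Proposition \ref{basicproperties}(ii) to transfer the $\gr$-class of $h_i s_1 \cdots s_k = h_{i_k} \sigma s_k$ onto $h_{i_k} s_k$ and thence onto $h_{i_{k+1}}$.
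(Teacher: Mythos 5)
Your argument for the main identity \eqref{(5)} and for part~(i) is correct and matches the paper: induction using \eqref{(1.3)} for the identity, and induction plus the right-congruence property of $\gl$ for~(i).

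Your argument for part~(ii), however, has a genuine gap. You assert the equivalence ``$h_i s_1 \cdots s_k \in T$ iff $i_{k+1} = 1$'' and justify it by saying that $T$ is closed under multiplication and $\lambda(1,s)=1$, ``so once the running product enters $T$ it stays there and all subsequent indices $i_m$ equal~$1$.'' But those two facts are about two different sequences: closure of $T$ tells you the actual products $h_i s_1 \cdots s_m$ stay in $T$ once they enter, and $\lambda(1,s)=1$ tells you the indices stay $1$ once they hit $1$. Neither tells you that the step at which the product first enters $T$ is the step at which the index first becomes $1$ --- the index $i_{k+1}=\lambda(i_k, s_k)$ is determined by whether $h_{i_k}s_k \in T$, not by whether $h_i s_1\cdots s_k \in T$, and these are a priori unrelated. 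The paper bridges exactly this gap by invoking part~(i): if $k$ is the first step at which the product enters $T$, then by~(i) applied one step earlier $h_i s_1\cdots s_{k-1} \gl h_{i_k}$, so $h_i s_1\cdots s_k \gl h_{i_k}s_k$; since $\gl$-classes do not cross the $T$/complement boundary, $h_{i_k}s_k \in T$ and hence $i_{k+1}=1$. Without this step your ``only if'' direction does not follow, so you would need to make the reliance on~(b) (your version of~(i)) explicit.

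For part~(iii), you identify a ``hard part'' --- establishing $h_i \gr h_{i_{k+1}}$ --- that the paper never needs. The paper argues more directly: by induction $h_i s_1\cdots s_{n-1} \gh h_{i_n}$, and $h_i s_1\cdots s_{n-1} \gr h_i s_1\cdots s_n$, so by Proposition~\ref{basicproperties}\ref{prop_{Rel_Greens_Lemma}} right multiplication by $s_n$ is an $\gr$-class-preserving bijection between the two $\gl$-classes; it therefore maps the $\gh$-class of $h_i s_1\cdots s_{n-1}$ onto that of $h_i s_1\cdots s_n$, giving $h_{i_n}s_n \gh h_i s_1\cdots s_n$ immediately, and then $h_{i_n}s_n \in H_{i_{n+1}}$ finishes the argument. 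Your proposed closing --- decomposing $h_i s_1\cdots s_{k-1} = h_{i_k}\sigma$ with $\sigma\in\mathrm{Stab}(H_{i_k})$ and picking a representative of the group inverse of $\sigma$ in $\Gamma(H_{i_k})$ --- does converge on the same application of Proposition~\ref{basicproperties}\ref{prop_{Rel_Greens_Lemma}} (with $u=h_i s_1\cdots s_{k-1}$ and $v=h_i s_1\cdots s_k$, since these are equal to $h_{i_k}\sigma$ and $h_{i_k}\sigma s_k$), but the group-inverse apparatus is superfluous: once you observe that $h_{i_k}$ lies in the $\gl$-class of $h_i s_1\cdots s_{k-1}$ and that the latter is $\gr$-related to $h_i s_1\cdots s_k$, the bijection does the work directly, without ever establishing $h_i \gr h_{i_{k+1}}$ as an intermediate goal. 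Your chain-of-inclusions observation ($xT^1 \supseteq xsT^1$ for $s\in T$, so the chain must be constant) for obtaining $h_i s_1\cdots s_{k}\gr h_i$ at every step is a nice, slightly cleaner way to see the auxiliary fact the paper also uses.
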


\begin{lem_dual}[Dual rewriting lemma] \label{lemma_bookkeeping'}
\it
Let $i \in I^1$ and let $s_1, s_2, \ldots, s_n \in S$. Then
\begin{equation}
 s_1 s_2 \ldots s_n h_i = h_j t_1 t_2 \ldots t_n  \label{(5')}
\end{equation}
where $t_1, t_2, \ldots, t_n \in T^1$ and $j \in I^1$ are obtained as a result of the following recursion:
\begin{eqnarray}
i_n & = & i \label{(6')} \\
i_{k-1} & = & \rho (s_k, i_k) \quad (k = n, \ldots, 1), \label{(7')} \\
j & = & i_{0} \label{(8')} \\
t_k & = & \sigma(s_k, i_k) \quad (k = n, \ldots, 1). \label{(9')}
\end{eqnarray}
Furthermore:
\begin{enumerate}[(i)]
\item If all $s_q \in T$ and $s_1 s_2 \ldots s_n h_i \not\in T$ then
$
s_1 s_2 \ldots s_n h_i \gr h_j.
$
\item If all $s_q \in T$ and $s_1 s_2 \ldots s_n h_i \in T$ then $j=1$ and so $h_j = h_1 =1$.
\item If all $s_q \in T$ and $s_1 s_2 \ldots s_n h_i \gl h_i$ then
$
s_1 s_2 \ldots s_n h_i \gh h_j.
$
\end{enumerate}
\end{lem_dual}

\begin{proof}
We just prove Lemma~\ref{lemma_bookkeeping}. Lemma~\ref{lemma_bookkeeping}' may be proved using a dual argument.

For the first part we proceed by induction on $n$. The result holds trivially when $n=0$. Supposing that the result holds for $n$, the inductive step is as follows:
\[
\begin{array}{rcll}
h_i s_1 s_2 \ldots s_n s_{n+1} & = &
t_1 \ldots t_n h_{i_{n+1}} s_{n+1} & \quad \mbox{(by induction)} \\
& = &
t_1 \ldots t_n \tau(i_{n+1}, s_{n+1}) h_{\lambda(i_{n+1}, s_{n+1})} & \quad \mbox{(by \eqref{(1.3)})} \\
& = &
t_1 \ldots t_n t_{n+1} h_{i_{n+2}}.
\end{array}
\]

(i) We prove the result by induction on $n$. When $n=0$ there is nothing to prove. Now suppose that the result holds for $n-1$.
Because $s_n \in T$ and $h_i s_1 \ldots s_n \not\in T$ it follows that $h_i s_1 \ldots s_{n-1} \not\in T$ so we may apply induction to obtain:
\[
h_i s_1 s_2 \ldots s_{n-1} \gl h_{i_n}.
\]
This implies 
\[
h_i s_1 \ldots s_{n-1} s_n \gl h_{i_n} s_n \gh h_{\lambda (i_n,s_n)} = h_{i_{n+1}}
\]
by \eqref{(1.5)} and \eqref{(7)}. 

(ii) If $i=1$ then from \eqref{(1.5)}, \eqref{(6)}, \eqref{(7)} and \eqref{(8)} it follows that $1 = i_1 = i_2 = \ldots = i_{n+1} = j$.
Otherwise, since $h_i s_1 \ldots s_n \in T$ there exists $0 \leq k \leq n-1$ such that
\[
h_i s_1 \ldots s_k \not\in T \quad \& \quad h_i s_1 \ldots s_k s_{k+1} \in T.
\]
By (i) applied to $h_i s_1 \ldots s_k$ we obtain 
\[
h_{i_{k+1}} \gl h_i s_1 \ldots s_k
\]
which implies 
\[
h_{i_{k+1}} s_{k+1} \gl h_i s_1 \ldots s_k s_{k+1}
\]
and so, $h_{i_{k+1}} s_{k+1} \in T$. Hence by \eqref{(1.5)} it follows that $i_{k+2} = \lambda(i_{k+1}, s_{k+1}) = 1$. 
Then as above \eqref{(7)} gives $1 = i_{k+2} = i_{k+3} = \ldots = i_{n+1} = j$.

(iii) Again we proceed by induction on $n$. There is nothing to prove when $n=0$. Suppose that the result holds for $n-1$.
Since $h_i s_1 \ldots s_n \gr h_i$ there exists $t \in T$ such that $h_i s_1 \ldots s_n t = h_i$. But since $s_n \in T$ 
and $s_1 \ldots s_{n-1} \in T$
it follows that $h_i s_1 \ldots s_{n-1} \gr h_i$ and so we may apply induction. This gives
\[
h_i s_1 \ldots s_{n-1} \gh h_{i_n}.
\]
Since $h_i s_1 \ldots s_{n-1} \gr h_i s_1 \ldots s_{n-1} s_n$, by Proposition~\ref{basicproperties}\ref{prop_{Rel_Greens_Lemma}} the mapping $x \mapsto xs_n$ sends the $\gh$-class of $h_i s_1 \ldots s_{n-1}$ bijectively onto the $\gh$-class of $h_i s_1 \ldots s_{n-1} s_n$. In particular
\[
h_{i_n} s_n \gh h_i s_1 \ldots s_{n-1} s_n.
\]
On the other hand, $h_{i_n} s_n \in H_{\lambda(i_n,s_n)} = H_{i_{n+1}}$ by \eqref{(1.5)} and \eqref{(7)}, and so 
\[
h_{i_{n+1}} \gh h_{i_n} s_n \gh h_i s_1 \ldots s_n,
\]
as required. 
\end{proof}

\section{Generators for Subsemigroups}
\label{secsubsemigroups}

Let $S$ be a semigroup, $T$ be a subsemigroup of $S$ and $\{ H_i : i \in I \}$ the set of relative $\gh$-classes in $S \setminus T$. In this section we show how to relate generating sets for $S$, $T$ and the relative Sch\"{u}tzenberger groups $\Gamma(H_i)$. Throughout the section we use the same notation and conventions introduced in Section~\ref{sec_rewriting}.

If $B$ is a generating set for $T$ and $C$ is a subset of $S$ satisfying $S^1 = C^1 T^1$ then obviously $B \cup C$ generates $S$. In particular we have the following easy result.

\begin{theorem}\label{thm_smalltobig}
Let $S$ be a semigroup and let $T$ be a subsemigroup of $S$. If $B$ is a generating set for $T$ and $C = \{ h_i : i \in I  \}$ is a set of representatives of the relative $\gh$-classes of $S \setminus T$, then $B \cup C$ is a generating set for $S$. In particular, if $T$ is finitely generated and has finite Green index in $S$ then $S$ is finitely generated.
\end{theorem}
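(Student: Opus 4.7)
The plan is to verify the hint made in the paragraph immediately preceding the theorem, namely that $S^1 = C^1 T^1$ with $C = \{h_i : i \in I\}$, and then invoke the easy observation that in this situation $B \cup C$ generates $S$.

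First I would pick an arbitrary element $s \in S$ and split into two cases according to whether $s \in T$ or $s \in S \setminus T$. If $s \in T$, then $s$ is a product of elements of $B$ since $B$ generates $T$, so certainly $s \in \sgrp{B \cup C}$. If instead $s \in S \setminus T$, then $s$ lies in some relative $\gh$-class $H_i$ for a unique $i \in I$. Here the key input is Proposition~\ref{schutzstabproperties}\ref{item-sch3}, which gives $H_i = h_i \mathrm{Stab}(H_i) \subseteq h_i T^1$. Hence $s = h_i t$ for some $t \in T^1$; if $t = 1$ then $s = h_i \in C$, and otherwise $t \in T$ is itself a product of elements of $B$, so $s = h_i t$ is a product of elements in $B \cup C$, with $h_i \in C$ appearing as the leftmost factor.

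Having verified $S^1 = C^1 T^1$, we conclude that $B \cup C$ generates $S$. For the second statement, if $T$ is finitely generated we may choose $B$ to be finite, and finite Green index means that $|I| + 1$ is finite, so $|C| = |I|$ is finite. Hence $B \cup C$ is a finite generating set for $S$.

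There is no genuine obstacle here: the content of the theorem is really just the combination of the definition of generating set for $T$ with the structural fact that every relative $\gh$-class of $S \setminus T$ is a single right $T^1$-orbit of its representative (Proposition~\ref{schutzstabproperties}\ref{item-sch3}). The only small point to mind is that the decomposition $s = h_i t$ may involve $t = 1$, which is why we need $h_i$ itself in the generating set (not merely $h_i$ multiplied by elements of $T$). This is precisely why $C$ is included as a whole and not absorbed into $BC$.
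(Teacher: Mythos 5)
Your proof is correct and fills in exactly the step the paper leaves implicit: the paper states Theorem~\ref{thm_smalltobig} as an immediate consequence of the observation $S^1 = C^1T^1$ without proof, and your appeal to Proposition~\ref{schutzstabproperties}\ref{item-sch3} to show $H_i = h_i\,\mathrm{Stab}(H_i) \subseteq h_iT^1$ is precisely the verification of that identity. The route and the key structural fact are the same as the paper's.
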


\begin{remark}
Of course, in the above theorem we can replace $C$ by a transversal of just the relative $\gr$-classes (or $\gl$-classes) in $S \setminus T$, and $B \cup C$ will still generate $S$.
\end{remark}

Now we go on to consider the more interesting converse statement. We begin by fixing a particular choice of $\sigma$ and $\tau$ from Lemma~\ref{lem_sigmaandtau}.

The following result provides a common generalisation of the classical result of Schreier for groups (see \cite[Chapter II]{lyndon_cgt} for example) and the analogous theorem for subsemigroups of finite Rees index due to Jura \cite{Jura1}.

\begin{theorem} \label{thm_finitegeneration}
Let $S$ be a semigroup generated by $A \subseteq S$, let $T$ be a subsemigroup of $S$, and let $I$, $\sigma$, $\tau$ be as above. Then $T$ is generated by the set
\[
B = \{  
\tau(i, \sigma(a,j)) : i,j \in I^1, \; a \in A
\}.
\]
In particular, if $S$ is finitely generated and $T$ has finite Green index in $S$, then $T$ is finitely generated. 
\end{theorem}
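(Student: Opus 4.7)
The plan is as follows. Take an arbitrary $t\in T$. Since $A$ generates $S$, we may write $t=a_1a_2\cdots a_n$ with each $a_k\in A$, noting that the individual $a_k$ need not themselves lie in $T$. The plan is to turn this word into a product of elements of $B$ by two successive applications of the rewriting machinery of Section \ref{sec_rewriting}: a right-to-left pass via Lemma \ref{lemma_bookkeeping}$'$ that extracts $\sigma$-terms, followed by a left-to-right pass via Lemma \ref{lemma_bookkeeping} that packages each $\sigma$ inside a $\tau$, producing exactly the generators of $B$.

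For the first pass, apply Lemma \ref{lemma_bookkeeping}$'$ to $a_1a_2\cdots a_n\cdot h_1$ (which equals $t$ because $h_1=1$) with initial index $i=1$. The recursion yields indices $i_n=1$, $i_{k-1}=\rho(a_k,i_k)$, and a trailing index $i_0\in I^1$, together with elements $\sigma_k=\sigma(a_k,i_k)\in T^1$, giving
\[
t \;=\; h_{i_0}\,\sigma_1\,\sigma_2\,\cdots\,\sigma_n.
\]
Since the $a_k$ need not be in $T$, the furthermore clauses do not apply and we have no control over $i_0$ at this stage. For the second pass, feed this new expression into Lemma \ref{lemma_bookkeeping} with starting index $i_0$ and the $\sigma_k$'s playing the role of letters; this produces
\[
t \;=\; \tau(m_1,\sigma_1)\,\tau(m_2,\sigma_2)\,\cdots\,\tau(m_n,\sigma_n)\,h_{m_{n+1}},
\]
where $m_1=i_0$ and $m_{k+1}=\lambda(m_k,\sigma_k)$. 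Each factor has the form $\tau(m_k,\sigma(a_k,i_k))$ with $m_k,i_k\in I^1$ and $a_k\in A$, and therefore lies in $B$.

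It remains to dispose of the trailing $h_{m_{n+1}}$. Factors with $\sigma_k=1$ contribute trivially, since then $\tau(m_k,1)$ can be chosen to be $1$ and $m_{k+1}=\lambda(m_k,1)=m_k$; after deleting them, the surviving $\sigma$-letters all lie in $T$, and since the product is still $t\in T$, part (ii) of Lemma \ref{lemma_bookkeeping} now applies and forces $m_{n+1}=1$, hence $h_{m_{n+1}}=1$. This proves $t\in\langle B\rangle$; the reverse inclusion $\langle B\rangle\cap S\subseteq T$ is immediate from $B\subseteq T^1$. The finite-generation statement is read off from the crude bound $|B|\le|A|\cdot|I^1|^2$. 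The main obstacle is disposing of the stray $h$ produced by the first pass: a single rewriting step always leaves a factor outside $T^1$, and what makes the second step work is that the $\sigma$-letters from the first pass are confined to $T^1$, which is precisely the hypothesis required by part (ii) of the Rewriting Lemma.
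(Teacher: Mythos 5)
Your proof is correct and follows essentially the same two-pass rewriting strategy as the paper: apply Lemma \ref{lemma_bookkeeping}$'$ right-to-left to obtain $t = h_{i_0}\sigma_1\cdots\sigma_n$, then apply Lemma \ref{lemma_bookkeeping} left-to-right and invoke part (ii) to kill the trailing $h$. The one place you go beyond the paper is in explicitly handling the possibility that some $\sigma_k = 1 \notin T$ (Lemma \ref{lem_sigmaandtau} only guarantees $\sigma(s,i)\in T^1$), whereas the paper simply asserts $\sigma(a_k,i_k)\in T$; your observation that such factors leave the indices unchanged and can be deleted before applying part (ii) is a legitimate and welcome refinement of the same argument.
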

\begin{proof}
Let $t \in T$ and write $t = a_1 a_2 \ldots a_n$, a product of generators from $A$. Applying Lemma~\ref{lemma_bookkeeping}' gives
\[
t = h_{i_0} \sigma(a_1, i_1) \sigma(a_2, i_2) \ldots \sigma(a_n,i_n)
\]
where
\[
i_n = 1, \quad i_{k-1} = \rho(a_k, i_k), \quad k=n,n-1, \ldots, 1.
\]
This rewriting may be viewed as pushing the representative $h_1 = 1$ through the product from right to left using Lemma~\ref{lemma_bookkeeping}'. Note that $i_0$ is not necessarily equal to $1$ here, but if it were then we would be done since $\sigma(a_k,i_k) = \tau(1, \sigma(a_k, i_k)) \in B$. Applying Lemma~\ref{lemma_bookkeeping} we now perform an analogous rewriting pushing the representative $h_{i_0} = h_{j_1}$ back through the product from left to right giving
\[
\begin{array}{rcll}
& & h_{j_1} \sigma(a_1, i_1) \sigma(a_2, i_2) \ldots \sigma(a_n, i_n) &   \\
& = &
\tau(j_1, \sigma(a_1, i_1)) \tau(j_2, \sigma(a_2, i_2)) \ldots \tau(j_n, \sigma(a_n, i_n)) h_{j_{n+1}},
\end{array}
\]
where 
\[
j_1 = i_0, \quad j_{k+1} = \lambda(j_k, \sigma (a_k, i_k)), \quad k = 1,2,  \ldots, n. 
\]
Now by Lemma~\ref{lemma_bookkeeping}(ii)  since each $\sigma(a_k, i_k) \in T$ and 
\[
h_{j_1} \sigma(a_1, i_1) \sigma(a_2, i_2) \ldots \sigma(a_n, i_n) \in T
\]
it follows that  $j_{n+1} = 1$ and therefore
\[
t = \tau(j_1, \sigma(a_1, i_1)) \tau(j_2, \sigma(a_2, i_2)) \ldots \tau(j_n, \sigma(a_n, i_n)) \in \lb B \rb. \] 
The last statement in the theorem follows since if $A$ and $I$ are both finite then $B$ is finite.
\end{proof}

One natural question we might ask at this point is whether Theorem~\ref{thm_finitegeneration} might be
proved under
the weaker assumption that $S \setminus T$ is a union of finitely many $\gr$-classes (or dually $\gl$-classes).
Such a weakening is possible, for example, in the case of
groups (and more generally inverse semigroups) where for the complement
the properties of having
finitely many relative $\gr$-, $\gl$- or $\gh$-classes are all equivalent conditions.
The following example (and its dual) shows that for arbitrary semigroups
such a weakening of the hypotheses is not possible.

\begin{example}
Let $S$ be the semigroup, with a zero element $0$ and an identity $1$, defined by the following presentation:
\[
\lb a,b, b^{-1}, c \ | \ a^2 = c^2 = 0, \ ba = b^{-1}a = ca = cb = cb^{-1} = 0, \ bb^{-1} = b^{-1}b = 1 \rb.
\]
It is easily seen that a set of normal forms for the elements of $S$ is:
\[
N = \{0 \} \cup \{ a^i b^j c^k : i,k \in \{0,1\}, j \in \mathbb{Z}  \}.
\]
From this it follows that this semigroup is isomorphic to the semigroup of triples $S = \mathbb{Z}_2 \times \mathbb{Z} \times \mathbb{Z}_2 \cup \{ 0 \} $ with multiplication:
\[
(u,v,w)(d,e,f) = \begin{cases}
(u,v+e,f) & \mbox{if} \ w=d=0 \\
0 & \mbox{otherwise}.
\end{cases}
\]
Clearly $S$ is generated by $A = \{ (1,0,0), (0,1,0), (0,0,1), (0,-1,0)  \}$. Now define:
\[
T = \{ (x,y,z) \in S : z \geq x \} \cup \{ 0 \},
\]
where $\{ 0 ,1 \}$ is ordered in the usual way $0 < 1$. So $T$ contains all triples except those of the form $(1,i,0)$. Let $(x_1,y_1,z_1), (x_2,y_2,z_2) \in T$ be arbitrary. Then
\[
(x_1,y_1,z_1)(x_2,y_2,z_2) =
\begin{cases}
(x_1,y_1 + y_2,z_2) & \mbox{if $z_1 = x_2 = 0$} \\
0 & \mbox{otherwise},
\end{cases}
\]
and in the first of these two cases $(x_1,y_1 + y_2,z_2) \in T$ since $z_2 \geq x_1 = 0$. It follows that $T$ is a subsemigroup of $S$.
Now $S \setminus T$ has a single relative $\gr$-class since $S \setminus T = \{ (1,i,0): i \in \mathbb{Z}  \}$ and
\[
(1,i,0)(0,j-i,0) = (1,j,0).
\]
On the other hand, $T$ is not finitely generated since the elements in the set $\{ (1,j,1) : j \in \mathbb{Z} \}$ cannot be properly decomposed in $T$, as:
\[
(x_1,y_1,z_1)(x_2,y_2,z_2) = (1,j,1)
\]
(where $(x_i,y_i,z_i) \neq (1,j,1)$) implies that $x_1=1$, $z_2=1$ and $z_1 = x_2 = 0$. But then $(x_1,y_1,z_1) = (1,y_1,0) \not\in T$ which is a contradiction.

In conclusion, $S$ is finitely generated, $S \setminus T$ has finitely many relative $\gr$-classes, but $T$ is not finitely generated.
\end{example}

Before giving the next example we introduce a construction which will be used
several times throughout the paper.
It is a special case of the well known \emph{strong semilattice of semigroups},
where  the underlying semilattice is just a $2$-element chain; see \cite[Chapter~4]{howie_fundamentals} for details of the general construction.

\begin{definition}
\label{def_strong_semilattice}
Let $T$ and $U$ be semigroups and let $\phi: T \rightarrow U$ be a homomorphism. From this triple we construct a monoid $S = \mathcal{S}(T,U,\phi)$ where $S = T \dotcup U$ and multiplication is defined in the following way. Given $x,y \in S$ if $x,y \in T$ then we multiply as in $T$; if $x,y \in U$ then we multiply as in $U$; if $x \in T$ and $y \in U$ then take the product of $\phi(x)$ and $y$ in $U$; if $x \in U$ and $y \in T$ then take the product of $x$ and $\phi(y)$ in $T$.
\end{definition}

Another natural way that one might consider weakening the hypotheses of
Theorem~\ref{thm_finitegeneration} would be to replace the condition that there are finitely many relative $\gh$-classes in $S \setminus T$ with the weaker property that there is a finite subset $C$ of $S$ such that
\begin{equation}
\forall s \in S,  \exists c \in C, \exists t, t' \in T : s = ct' = tc. \label{***}
\end{equation}
The following example shows that Theorem~\ref{thm_finitegeneration} cannot be proved under this weaker assumption.
\begin{example}
Let $M$ be a monoid finitely generated by a set $A$, and 
with a two-sided ideal $R$ and suppose that, as a two-sided ideal, $R$ is not finitely generated.
Such examples exist; for example we could take $M$ to be the free monoid on $\{ a, b \}$ and $R$ to be the two-sided ideal generated by all words of the form $ab^ia$ $(i \in \mathbb{N})$.
Let $\overline{M}$ be an isomorphic copy of $M$ with isomorphism:
\[
\phi: M \rightarrow \overline{M}, \quad m \mapsto \overline{m}.
\]
Define $S = \mathcal{S}(M,\overline{M},\phi)$ and $T = M \cup \overline{R}$ where $\overline{R} = \{ \overline{r} : r \in R \}$.

Then $S$ is finitely generated, by $A \cup \{ \overline{e} \}$ where $e$ is the identity of $M$, and $T$ is a subsemigroup of $S$. Also $T \leq S$ satisfies condition $\eqref{***}$ with $C = \{ e, \overline{e} \}$, since for all $s \in S$
\[
s = \begin{cases}
es = se & \mbox{if $s \in M \subseteq T$} \\
\overline{e} m = m \overline{e} & \mbox{if $s = \overline{m}$ for some $m \in M \subseteq T$.}
\end{cases}
\]
However, $T$ is not finitely generated. Indeed, if $T$ were finitely generated then there would be a finite subset $X$ of $R$ satisfying $T = \lb M \cup \overline{X} \rb$. Then for every $r \in R$ we could write $\overline{r} \in T$ as a product of elements of $M \cup \overline{X}$ where, since $M \leq T$, this product would need to have at least one term from $\overline{X}$. Thus we would have $\overline{r} = \alpha \overline{x} \beta$ for some $x \in X$ and $\alpha, \beta \in T^1$ and applying $\phi^{-1}$ it would follow that, in $M$, $X$ generates $R$ as a two-sided ideal. Since $X$ is finite, this would contradict the original choice of $R$.

\end{example} 

\section{Generators for the Sch\"{u}tzenberger Groups}  
\label{secschgps}

As above, let $S$ be a semigroup and let $T$ be a subsemigroup of $S$. In this section we show how generating sets for the $T$-relative Sch\"{u}tzenberger groups in $S$ may be obtained from generating sets of $T$.

Fix an arbitrary relative $\gh$-class $H$ of $S$ and fix a representative $h \in H$. We do not insist here that $H$ is a subset of the complement $S \setminus T$, and thus allow the possibility that $H \subseteq T$ (meaning that $H$ is just an $\gh$-class of $T$ in the classical sense). Let $\mathrm{Stab}(H) \leq T$ be the stabilizer of $H$, let $\gamma$ be the Sch\"{u}tzenberger congruence and $\Gamma = \mathrm{Stab}(H) / \gamma$ be the corresponding relative Sch\"{u}tzenberger group. Let $\{H_{\lambda}: \lambda \in \Lambda\}$ be the collection of all $\gh$-classes in the $\gr$-class of $H$. By Proposition~\ref{basicproperties}(ii) we can choose elements $p_{\lambda}, p_{\lambda}' \in T^1$ such that
\[
H p_{\lambda} = H_{\lambda}, \quad h_1 p_{\lambda} p_{\lambda}' = h_1, \quad h_2 p_{\lambda}' p_{\lambda} = h_2, \quad (\lambda \in \Lambda, \ h_1 \in H, \ h_2 \in H_{\lambda}).
\]
Also we assume that $\Lambda$ contains a distinguished element $\lambda_1$ with
\[
H_{\lambda_1} = H, \quad  p_{\lambda_1} = p_{\lambda_1}' = 1.
\]
We can define an action of $T^1$ on the set $\Lambda \cup \{ 0 \}$ by:
\[
\lambda \cdot t = \begin{cases}
\mu & \mbox{if} \ \lambda, \mu \in \Lambda \ \& \ H_{\lambda}t = H_{\mu} \\
0 & \mbox{otherwise}.
\end{cases}
\]
In the classical (non-relative) case generating sets for
Sch\"{u}tzenberger groups may be obtained from a generating set of the containing monoid by
adapting the classical method in group theory for computing Schreier generators (see \cite[Chapter II]{lyndon_cgt}) for a subgroup (this may be found implicitly in Sch\"{u}tzenberger's original papers \cite{Schutzenberger1957}, \cite{Schutzenberger1958}, and explicitly in \cite{Ruskuc2}). In the following we record the easy generalisation of that result to the relative setting (the original classical results may be obtained by setting $S = T$).
\begin{theorem}\label{thm_schutz_gen}
Let $S$ be a semigroup, let $T$ be a subsemigroup of $S$ generated by a set $B$, and let $H$ be an arbitrary $T$-relative $\gh$-class of $S$. Then the relative Sch\"{u}tzenberger group $\Gamma = \Gamma(H)$ of $H$ is generated by:
\[
X = \{ (p_{\lambda} b p_{\lambda \cdot b}') / \gamma: \lambda \in \Lambda, \ b \in B, \ \lambda \cdot b \neq 0  \}.
\]
In particular, if $T$ is finitely generated, and the relative $\gr$-class of $H$ contains only finitely many relative $\gh$-classes, then $\Gamma$ is finitely generated.
\end{theorem}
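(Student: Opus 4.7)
The plan is to show every element of $\Gamma = \Gamma(H)$ lies in $\langle X \rangle$ by a Schreier-style rewriting in which each generator $b_k$ appearing in a $B$-word for a representative of a class in $\Gamma$ gets absorbed into a factor of the form $p_\lambda b_k p_{\lambda \cdot b_k}'$. The method directly mirrors the classical Schreier construction of generators for a subgroup, with the ``coset representatives'' role played by the $p_\lambda$ and their ``inverses'' by the $p_\lambda'$, and with equality of cosets replaced by the Sch\"{u}tzenberger congruence $\gamma$.

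Given $t \in \mathrm{Stab}(H)$, I may assume $t \in T$ (the case $t = 1$ yields the identity of $\Gamma$), and write $t = b_1 b_2 \cdots b_n$ with each $b_k \in B$. Set $\mu_0 = \lambda_1$ and $\mu_k = \mu_{k-1} \cdot b_k$ for $k = 1, \ldots, n$, viewed a priori in $\Lambda \cup \{0\}$. The key preliminary claim is that each $\mu_k \in \Lambda$ and that $h b_1 \cdots b_k \in H_{\mu_k}$. This uses that $ht = h b_1 \cdots b_n \in H$ (since $Ht = H$), so $h \gr h b_1 \cdots b_n$; combined with the trivial inclusions $h b_1 \cdots b_k \in h T^1$ and $h \in (h b_1 \cdots b_n) T^1 \subseteq (h b_1 \cdots b_k) T^1$, this forces $h \gr h b_1 \cdots b_k$ for every $k$. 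Hence each partial product lies in some $H_\mu$ with $\mu \in \Lambda$, and Green's lemma (Proposition~\ref{basicproperties}\ref{prop_{Rel_Greens_Lemma}}), together with a short induction, pins down $\mu = \mu_k$. In particular $\mu_0 = \mu_n = \lambda_1$, so $p_{\mu_0} = p_{\mu_n}' = 1$, and each $y_k := p_{\mu_{k-1}} b_k p_{\mu_k}'$ lies in $\mathrm{Stab}(H)$ because right multiplication by $y_k$ traces $H \to H_{\mu_{k-1}} \to H_{\mu_k} \to H$ (using $H_\mu p_\mu' = H$, which is immediate from $p_\mu p_\mu'$ fixing $H$ pointwise).

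It remains to show $t/\gamma = (y_1/\gamma)(y_2/\gamma) \cdots (y_n/\gamma)$. By Proposition~\ref{schutzstabproperties}\ref{item-sch2} this reduces to the equality $ht = h \cdot y_1 y_2 \cdots y_n$. Using $p_{\mu_0} = p_{\mu_n}' = 1$, the right-hand side telescopes to $h \cdot b_1 p_{\mu_1}' p_{\mu_1} b_2 \cdots p_{\mu_{n-1}} b_n$, which is shown by induction on $k$ to equal $h b_1 \cdots b_n = ht$ by repeatedly inserting the identity $h' p_{\mu_k}' p_{\mu_k} = h'$, valid for $h' \in H_{\mu_k}$ and in particular for $h' = h b_1 \cdots b_k$. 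The finite generation statement then follows immediately from $|X| \leq |\Lambda| \cdot |B|$. The only genuine obstacle is the $\gr$-class tracking in the preliminary step: one must rule out the possibility that some intermediate partial product $h b_1 \cdots b_k$ escapes the $\gr$-class of $H$, so that $\mu_{k-1} \cdot b_k$ genuinely lies in $\Lambda$ rather than equalling $0$. Once this is secured, the remainder of the argument is a routine insertion-of-inverses calculation.
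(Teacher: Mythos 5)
Your proposal is correct, and its heart is the same Schreier-style rewriting as the paper's: write a $B$-word for $t$, thread the ``coset representatives'' $p_\mu$, $p_\mu'$ through, and use the insertion of $p_{\mu}'p_{\mu}$ (which acts as the identity on $H_\mu$) to split $t/\gamma$ into factors from $X$. The organization, however, differs slightly and is worth contrasting. The paper introduces an auxiliary set $\Gamma' = \{ (p_\lambda t p_{\lambda\cdot t}')/\gamma : \lambda \in \Lambda,\ t\in T,\ \lambda\cdot t\neq 0\}$, proves $\Gamma=\Gamma'$, and then peels off the leftmost letter of $t$ recursively; you instead start directly from $t\in\mathrm{Stab}(H)$, track the orbit $\mu_0,\mu_1,\ldots,\mu_n$ across the whole word, and verify the full telescoping identity $ht=hy_1\cdots y_n$ in one calculation via Proposition~\ref{schutzstabproperties}\ref{item-sch2}. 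A nice feature of your version is that it makes explicit the step the paper leaves implicit: namely that no intermediate prefix $hb_1\cdots b_k$ can leave the $\gr$-class of $H$ (so the action never hits $0$), which you establish cleanly from the inclusion $hT^1\subseteq (hb_1\cdots b_k)T^1\subseteq hT^1$ forced by $ht\gr h$. The paper's two-stage setup buys a cleaner induction statement (it can assume without comment that the parameter $\lambda$ ranges over all of $\Lambda$), at the cost of an extra equality $\Gamma=\Gamma'$ to check; your approach is a little more self-contained but has to carry the $\mu_k\in\Lambda$ bookkeeping through the whole argument. Both are valid, and they are minor variations on the same idea.
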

\begin{proof}
First we prove that with
\[
\Gamma' = \{ (p_{\lambda} t p_{\lambda \cdot t}') / \gamma: \lambda \in \Lambda, \ t \in T, \ \lambda \cdot t \neq 0 \}
\]
we have $\Gamma = \Gamma'$. On one hand, given $(p_{\lambda} t p_{\lambda \cdot t}') / \gamma \in \Gamma'$ since:
\[
H p_{\lambda} t p_{\lambda \cdot t}' = H_{\lambda} t p_{\lambda \cdot t}' = H_{\lambda \cdot t} p_{\lambda \cdot t}' = H
\]
it follows that $p_{\lambda} t p_{\lambda \cdot t}' \in \mathrm{Stab}(H)$, the stabilizer of $H$, and therefore $\Gamma'$ is well-defined and $\Gamma' \subseteq \Gamma$. On the other hand, given $v / \gamma \in \Gamma$ since $Hv = H$ it follows that $\lambda_1 \cdot v = \lambda_1$ and therefore that $v / \gamma = (p_{\lambda_1} v p_{\lambda_1}') / \gamma \in \Gamma'$, and $\Gamma \subseteq \Gamma'$.

To finish the proof we must show that an arbitrary element $g = (p_{\lambda} t p_{\lambda \cdot t}') / \gamma \in \Gamma'$ can be written as a product of generators from $X$. Write $t = b_1 \ldots b_m$ $(b_j \in B)$. We proceed by induction on $m$. If
$m=1$ we have $g \in X$. Now let $m>1$ and assume that the result holds for all smaller values. Let $a = b_1$ and $u = b_2 \ldots b_m$. Now we have: 
\[
\begin{array}{rcll}
g & = &  (p_{\lambda} t p_{\lambda \cdot t }') / \gamma	\\
& =	&
(p_{\lambda} au p_{\lambda \cdot au}') / \gamma & 	\\
& = &
(p_{\lambda} a p_{\lambda \cdot a}' p_{\lambda \cdot a}    u p_{(\lambda \cdot a) \cdot u}') / \gamma
& \mbox{(by definition of $p_{\lambda \cdot a}$)} \\
& = &
(p_{\lambda} a p_{\lambda \cdot a}') / \gamma \; (p_{\lambda \cdot a}    u p_{(\lambda \cdot a) \cdot u}') / \gamma
& \mbox{(since $p_{\lambda} a p_{\lambda \cdot a}', \; p_{\lambda \cdot a}    u p_{(\lambda \cdot a) \cdot u}' \in T$)} \\
& \in & \lb X \rb & \mbox{(by induction).}
\end{array}
\]
The last part of the theorem follows since if $B$ is finite and $\Lambda$ is finite then $X$ is finite. \end{proof}

Combining this result with Theorem~\ref{thm_finitegeneration} we obtain the following. 

\begin{theorem}\label{thm_main_fg}
Let $S$ be a semigroup, let $T$ be a subsemigroup of $S$ with finite Green index, and let $\{ H_i : i \in I \}$ be the $T$-relative $\gh$-classes in the complement $S \setminus T$. Then $S$ is finitely generated if and only if $T$ is finitely generated, in which case all the
relative Sch\"{u}tzenberger groups $\Gamma(H_i)$ are finitely generated as well.    
\end{theorem}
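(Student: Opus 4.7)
The plan is to assemble the theorem as a direct corollary of Theorems \ref{thm_smalltobig}, \ref{thm_finitegeneration} and \ref{thm_schutz_gen}, with one small observation to verify the relevant finiteness hypothesis for the Sch\"utzenberger part. The finite Green index assumption simply says that $I$, and hence $I^1$, is finite.

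For the implication $T$ finitely generated $\Rightarrow$ $S$ finitely generated, I would apply Theorem \ref{thm_smalltobig} directly: if $B$ is a finite generating set for $T$ and $\{h_i : i \in I\}$ is a transversal for the relative $\gh$-classes in $S\setminus T$, then the union $B \cup \{h_i : i \in I\}$ is finite and generates $S$. For the converse, $S$ finitely generated $\Rightarrow$ $T$ finitely generated, I would quote Theorem \ref{thm_finitegeneration}: starting from a finite generating set $A$ of $S$, the set $\{\tau(i,\sigma(a,j)) : i,j \in I^1,\ a \in A\}$ generates $T$, and it is finite because $A$ and $I^1$ are both finite.

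Finally, assuming $S$ (equivalently $T$) is finitely generated, I would apply Theorem \ref{thm_schutz_gen} individually to each $H_i$ with $i \in I$, using a finite generating set $B$ of $T$ obtained above. The one point requiring a brief argument is the second finiteness hypothesis of Theorem \ref{thm_schutz_gen}, namely that the relative $\gr$-class of $H_i$ contains only finitely many relative $\gh$-classes. Here I would invoke the fact recorded in Section \ref{sec_prelims} that the relative Green's relations respect $T$: since $H_i \subseteq S\setminus T$, the entire relative $\gr$-class of $H_i$ lies in $S\setminus T$, so it is a union of some of the finitely many classes $H_j$ ($j \in I$). Hence the index set $\Lambda$ appearing in Theorem \ref{thm_schutz_gen} is finite, and that theorem yields a finite generating set for each $\Gamma(H_i)$.

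There is no real obstacle: all the technical work has already been done in the preceding theorems, and the theorem is essentially a bookkeeping combination of them. The only subtlety worth flagging explicitly in the write-up is the observation that relative $\gr$-classes of elements of $S\setminus T$ stay inside $S\setminus T$, so that finite Green index automatically controls the parameter $|\Lambda|$ needed to apply Theorem \ref{thm_schutz_gen}.
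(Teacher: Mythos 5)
Your proposal is correct and follows exactly the route the paper takes: the theorem is stated immediately after Theorem~\ref{thm_schutz_gen} as a corollary of Theorems~\ref{thm_smalltobig}, \ref{thm_finitegeneration} and \ref{thm_schutz_gen}, with the same implicit use of the fact that relative Green's classes respect $T$ to bound $|\Lambda|$. Your explicit flagging of that last point is a sensible addition but does not change the argument.
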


\section{Building a presentation from the subsemigroup and Sch\"{u}tzenberger groups}
\label{sec_presentations}

Given a semigroup $S$ and a subsemigroup $T$, in this section we show how one can obtain a presentation for $S$ in terms of a given presentation for $T$ and presentations for all the relative Sch\"{u}tzenberger groups of $S \setminus T$. In the case that the Green index of $T$ in $S$ is finite we shall see that finite presentability is preserved.

A (semigroup) \emph{presentation} is a pair $\mathfrak{P} = \lb A| \mathfrak{R} \rb$ where $A$ is a
an alphabet and $\mathfrak{R} \subseteq A^+ \times A^+$ is a set
of pairs of words. An element $(u,v)$ of $\mathfrak{R}$ is called a \emph{relation} and is
usually written $u=v$. We say that $S$ is the \emph{semigroup defined by the presentation}
$\mathfrak{P}$ if $S \cong A^+ / \eta$ where $\eta$ is the smallest congruence on $A^+$
containing $\mathfrak{R}$. We may think of $S$ as the largest semigroup
generated by the set $A$ which satisfies all the relations of $\mathfrak{R}$. We say that
a semigroup $S$ is \emph{finitely presented} if it can be defined by $\lb A|\mathfrak{R} \rb$ where $A$ and
$\mathfrak{R}$ are both finite.

Let $S$ be a semigroup defined by a presentation $\lb A|\mathfrak{R} \rb$, where we identify $S$ with $\A^+ / \eta$.  We say that the word $w \in A^+$ \emph{represents the element} $s \in S$ if $s = w / \eta$.
Given two words $w,v \in A^+$ we write $w = v$ if $w$
and $v$ represent the same element of $S$ and write $w \equiv v$ if $w$ and $v$ are
identical as words.

We continue to follow the same notation and conventions as in previous sections, so $S$ is a semigroup, $T$ is a subsemigroup, and $\Gamma_i = \mathrm{Stab}(H_i) / \gamma_i = \Gamma(H_i)$ $(i \in I)$ are the Sch\"{u}tzenberger groups of the $T$-relative $\gh$-classes in $S \setminus T$.
As above we also assume $1 \not\in I$ and follow the convention $H_1 = \{ 1 \}$ and $h_1=1$ where $1$ is the external identity adjoined to $S$.

Let $\lb B | Q \rb$ be a presentation for $T$ and $\beta: B^+ \rightarrow T$ be the natural homomorphism associated with this presentation (mapping each word to the element it represents). Next define $A = B \cup \{ d_i : i \in I \}$ and
extend $\beta$ to $\alpha: A^+ \rightarrow S$ given by extending the map
\[
\alpha(a) = \begin{cases}
\beta(a) & \mbox{if $a \in B$} \\
h_i & \mbox{if $a=d_i$ for some $i \in I$}
\end{cases}
\]
to a homomorphism.
It follows from Theorem~\ref{thm_smalltobig} that $\alpha$ is surjective.
We also
introduce the symbol $d_1$ which we use to denote the empty word.

For every $i \in I$ let $\lb C_i | W_i \rb$ be a (semigroup) presentation for the group $\Gamma_i$ and let $\xi_i: C_i^+ \rightarrow \Gamma_i$ be the associated homomorphism.
By Proposition~\ref{schutzstabproperties}(iv), for all $i,j \in I$ if $h_i \gl^T h_j$ then
$\mathrm{Stab}(H_i) = \mathrm{Stab}(H_j)$ and $\Gamma(H_i) = \Gamma(H_j)$.
Therefore we may suppose without loss of generality that
for all $i,j \in I$:
\begin{equation}
\label{dagger}
\begin{array}{lll}
h_i \gl^T h_j & \Rightarrow& C_i = C_j\ \&\ W_i = W_j\\
(h_i,h_j)\not\in\gl^T &\Rightarrow& C_i \cap C_j = \varnothing\ \&\ W_i \cap W_j = \varnothing.
\end{array}
\end{equation}
For every letter $c \in C_i$ ($i \in I$) we have
\[
\xi_i (c) \in \Gamma_i = \mathrm{Stab}(H_i) / \gamma_i.
\]
Since $\mathrm{Stab}(H_i) \subseteq T$ and $\beta: B^+ \rightarrow T$ is surjective there exists a word $\overline{\xi_i}(c) \in B^+$ with $\beta(\overline{\xi_i}(c) ) \in \mathrm{Stab}(H_i)$ and
\[
\beta(  \overline{\xi_i}(c)  ) / \gamma_i = \xi_i(c).
\]
This defines a family of mappings $\overline{\xi}_i : C_i \rightarrow B^+$ ($i \in I$), which when taken together define a
mapping from $C = \bigcup_{i \in I} C_i$ to $B^+$, which in turn extends uniquely to a homomorphism $\overline{\xi} : C^+ \rightarrow B^+$. For $i \in I$ define $\overline{\xi}_i = \overline{\xi} \upharpoonright_{C_i^+}$, the restriction of $\overline{\xi}$ to the set $C_i^+ \subseteq C^+$. 
Since $\beta$ and $\xi_i$ are homomorphisms, and $\gamma_i$ is a congruence, the mapping $\overline{\xi}_i$ satisfies:
\[
\beta(  \overline{{\xi}_i}(w)  ) / \gamma_i = \xi_i(w)
\]
for all $w \in C_i^+$.

In order to write down our presentation for $S$ we need to lift the mappings $\rho$, $\lambda$, $\sigma$ and $\tau$ introduced
in Section~\ref{sec_rewriting} from elements of $S$ to words, in the obvious way.
Abusing notation we shall use the same symbols for these liftings. Thus, considered as mappings on words,
we have
$$
\begin{array}{ll}
\rho: A^* \times I^1 \rightarrow I^1, & \lambda: I^1 \times A^* \rightarrow I^1,\\
\sigma: A^* \times I^1 \rightarrow B^*,&  \tau: I^1 \times A^* \rightarrow B^*,
\end{array}
$$
where
\[
\begin{array}{rclrcl}
\rho(w,i) & = & \begin{cases}
j & \mbox{if $\alpha(w) h_i \in H_j$} \\
1 & \mbox{if $\alpha(w) h_i \in T$},
\end{cases}  &
\lambda(i,w) & = & \begin{cases}
j & \mbox{if $h_i \alpha(w) \in H_j$} \\
1 & \mbox{if $h_i \alpha(w) \in T$},
\end{cases}
\\
\alpha(w) h_i & = & h_{\rho(w,i)} \alpha(\sigma(w,i)), &
h_i \alpha(w) & = & \alpha(\tau(i,w)) h_{\lambda(i,w)}.
\end{array}
\]

\begin{theorem}\label{thm_pres_smalltobig}
Suppose that $T$ is a subsemigroup of $S$, and that $\langle B\:|\: Q\rangle$ is a presentation for $T$.
With the remaining notation as above,
$S$ is defined by the presentation with generators $A = B \cup \{ d_i | i \in I  \}$
and set of defining relations $Q$ together with:
\begin{alignat}{3}
&ad_i & & = d_{\rho(a,i)} \sigma(a,i) &\quad&  (a \in A, i \in I^1),    \label{22} \\
& d_j b && = \tau(j,b) d_{\lambda(j,b)} &&  (b \in B, j \in I^1),  \label{33} \\
&d_i \bar{\xi} (u) && = d_i \bar{\xi} (v) &&  (i \in I^1, (u,v) \in W_i).  \label{44}
\end{alignat}
In particular if $T$ has finite Green index in $S$, and all of the Sch\"{u}tzenberger groups $\Gamma_i$ are finitely presented, then $S$ is finitely presented.
\end{theorem}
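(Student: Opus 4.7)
The plan is to prove that $\tilde S\cong S$, where $\tilde S$ denotes the semigroup defined by the proposed presentation. The canonical map $\pi\colon \tilde S\to S$ induced by $\alpha$ is surjective by Theorem~\ref{thm_smalltobig}, and the relations hold in $S$ for immediate reasons: \eqref{22} and~\eqref{33} are the defining equations of $\sigma$ and $\tau$ in Lemma~\ref{lem_sigmaandtau}, while~\eqref{44} follows because $(u,v)\in W_i$ implies $\xi_i(u)=\xi_i(v)$, so $(\beta(\bar\xi(u)),\beta(\bar\xi(v)))\in\gamma_i$ and therefore $h_i\beta(\bar\xi(u))=h_i\beta(\bar\xi(v))$.

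For injectivity I fix normal forms in $A^+$: for each $t\in T$, a word $\omega(t)\in B^+$ with $\beta(\omega(t))=t$; and for each $s\in H_i\subseteq S\setminus T$, using Proposition~\ref{schutzstabproperties}(iii) to write $s=h_ig$ uniquely with $g\in\Gamma_i$, a word $\zeta_i(g)\in C_i^*$ representing $g$, setting $\nu(s)=d_i\bar\xi(\zeta_i(g))$ (or just $d_i$ when $g$ is the identity). The aim is to show every $w\in A^+$ is $\sim$-equivalent to $\nu(\alpha(w))$. Applying~\eqref{22} as a left-moving rewriting rule $ad_i\to d_{\rho(a,i)}\sigma(a,i)$, each step pushes a $d$-letter leftward by one position; adjacent pairs of $d$-letters coalesce into one via the same rule. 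The process terminates in a form $d_jv$ with $j\in I^1$ and $v\in B^*$; the Rewriting Lemma~\ref{lemma_bookkeeping}' identifies $j$ and $v$ precisely and guarantees $h_j\beta(v)=\alpha(w)$. When $\alpha(w)\in T$, applying~\eqref{33} to push $d_j$ rightward through $v$ forces the trailing index to $1$ by Lemma~\ref{lemma_bookkeeping}(ii), yielding a pure $B^+$-word that $Q$ reduces to $\omega(\alpha(w))$.

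The remaining case $\alpha(w)\in H_k$ requires more work. Interleaving~\eqref{22} and~\eqref{33} and combining parts~(i)--(iii) of Lemma~\ref{lemma_bookkeeping} and its dual allows one to adjust the index of the $d$-letter through the relative $\gl$- and $\gr$-classes of $\alpha(w)$ until the form becomes $d_kv^\ast$ with $\beta(v^\ast)\in\mathrm{Stab}(H_k)$ and $\beta(v^\ast)/\gamma_k=g$. The main obstacle is then the final identification $d_kv^\ast\sim d_k\bar\xi(\zeta_k(g))$ in $\tilde S$. I would first establish that $x\sim x'\bmod W_k$ in $C_k^+$ implies $d_k\bar\xi(x)\sim d_k\bar\xi(x')$ in $\tilde S$: given a one-step $W_k$-rewrite $x=x_1ux_2\to x_1vx_2$ with $(u,v)\in W_k$, commute $d_k$ past $\bar\xi(x_1)$ via~\eqref{33} (the index is preserved to $k$ because $\beta(\bar\xi(x_1))\in\mathrm{Stab}(H_k)$, by Lemma~\ref{lemma_bookkeeping}(iii)), apply~\eqref{44}, and commute $d_k$ back via~\eqref{22}. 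Secondly, for any $B$-word $v^\ast$ with $\beta(v^\ast)\in\mathrm{Stab}(H_k)$, I would show $d_kv^\ast\sim d_k\bar\xi(x)$ for some $x\in C_k^+$ with $\xi_k(x)=\beta(v^\ast)/\gamma_k$ by pushing $d_k$ rightward through both words via~\eqref{33} to obtain tail forms whose left action on $h_k$ agrees, then reconciling via $Q$ together with the first step.

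Once injectivity is established, the finite presentability claim is immediate: finite $B$, $Q$, $I$, and finite $C_i$, $W_i$ for each (of finitely many) $i\in I$ yield a finite generating set $A$ and only finitely many instances of~\eqref{22},~\eqref{33},~\eqref{44}, hence a finite presentation for $S$.
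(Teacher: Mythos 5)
Your broad plan (show every word over $A$ reduces to a fixed normal form $\nu(\alpha(w))$, hence two words equal in $S$ become identical) is sound and runs parallel to the paper's argument for the case $\alpha(w)\in T$ and to the ``first step'' of your treatment of $\alpha(w)\in H_k$. But the ``second step'' has a genuine gap, and it is precisely the point the paper's proof is engineered to avoid.

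You propose to show $d_k v^\ast \sim d_k\bar\xi(x)$ (with $\xi_k(x)=\beta(v^\ast)/\gamma_k$) by pushing $d_k$ rightward through $v^\ast$ and through $\bar\xi(x)$ via~\eqref{33}, obtaining $p\,d_k$ and $q\,d_k$, and then ``reconciling via $Q$.'' However, what the two rightward passes give you is only $\beta(p)h_k=\beta(q)h_k$, not $\beta(p)=\beta(q)$. The map $t\mapsto th_k$ restricted to the \emph{left} stabiliser of $H_k$ has a non-trivial kernel in general (the left-sided analogue of $\gamma_k$), so $p$ and $q$ need not represent the same element of $T$, and the relations $Q$ cannot join them. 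Nothing among \eqref{22}, \eqref{33}, \eqref{44} operates on a word of the form $p\,d_k$ without first moving $d_k$ again, so ``$Q$ together with the first step'' does not close this gap.

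The paper sidesteps the problem entirely by making a careful choice of the lifted $\sigma$: for letters $b\in B$ and indices $l$ with $H_l$ in the same $\gl^T$-class as $H_k$ (and $\rho(b,l)\neq 1$), the word $\sigma(b,l)\in B^+$ is chosen to lie in the image of $\bar\xi_k$, which is possible because $\alpha(\sigma(b,l))\in\mathrm{Stab}(H_k)$ by Proposition~\ref{schutzstabproperties}. Then the third pass (pushing the $d$-letter leftward through a $B$-word via~\eqref{22}) lands \emph{directly} on a word $d_k\bar\xi_k(\overline w)$ for some $\overline w\in C_k^+$, with no need to convert an arbitrary $v^\ast$ into a $\bar\xi_k$-word afterward. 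With this choice in place, your ``second step'' becomes unnecessary: you obtain a $\bar\xi_k$-word automatically, and your ``first step'' (translating $W_k$-rewrites into applications of \eqref{33} and \eqref{44}, using Lemma~\ref{lemma_bookkeeping}(iii) to control the subscript) finishes the argument. If you insist on keeping your normal-form formulation, you must either (a) build in the same careful choice of $\sigma$ and replace the faulty ``reconcile via $Q$'' step with a leftward push through $p$, or (b) find an entirely different justification that $d_kv^\ast\sim d_k\bar\xi(x)$ using only the stated relations.
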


\begin{proof} The defining relations $Q$ and  \eqref{22}--\eqref{44} clearly all hold. We want to show that any relation $w_1 = w_2$ ($w_1, w_2 \in A^+$) that holds in $S$ is a consequence of these relations.

Consider the word $w_1$ and transform it using our defining relations as follows. First write $w_1 = w_1 d_1$. Then use relations \eqref{22} to move $d_1$ through the word $w_1$ from right to left, one letter at a time. We obtain a word $d_i w_1'$ where $w_1' \in B^+$ and the subscript $i$ is computed by the algorithm given in Lemma~\ref{lemma_bookkeeping}'.
Next, use relations \eqref{33} to move $d_i$ through $w_1'$ from left to right, one letter at a time, to obtain a word $w_1'' d_j$ where $w_1'' \in B^+$ and $d_j$ is computed by the algorithm given in Lemma~\ref{lemma_bookkeeping}.

If $\alpha(w_1) \in T$ we have $j=1$ by Lemma~\ref{lemma_bookkeeping}(ii), and so we have transformed $w_1$ into a word $w_1'' \in B^+$. The same process applied to $w_2$ would then give a word $w_2'' \in B^+$. Since $\lb B | Q \rb$ is a presentation for $T$, the relation $w_1'' = w_2''$ is a consequence of $Q$, and so $w_1 = w_2$ is a consequence of the relations in this case.

Now consider the case $\alpha(w_1) = \alpha(w_2) \not\in T$. In this case, applying Lemma~\ref{lemma_bookkeeping}(i) shows that $h_j = \alpha(d_j) \gl \alpha(w_1)$. Using relations \eqref{22} once more, we rewrite $w_1'' d_j$ into $d_k w_1'''$. This time Lemma~\ref{lemma_bookkeeping}'(iii) applies, and so $h_k = \alpha(d_k) \gh \alpha(w_1)$. Furthermore, because $\alpha(d_j) \gl \alpha(w_1) \gh \alpha(d_k)$, it follows that all the intermediate $d_l$ appearing in this rewriting also satisfy $\alpha(d_l) \gl \alpha(w_1)$, and so $C_l = C_k$ by \eqref{dagger}. Thus all $\sigma(b,l)$ arising from applications of \eqref{22} are in the image of $\bar{\xi}_k$, and, since $\bar{\xi}_k$ is a homomorphism it follows that $w_1''' \equiv \bar{\xi}_k(\overline{w}_1) \equiv \bar{\xi} (\overline{w}_1)$ for some $\overline{w}_1 \in C_k^+$. The same process applied to $w_2$ rewrites it into a word $d_r \bar{\xi} (\overline{w}_2)$. From 
$$
h_1 = \alpha(d_r) \gh \alpha(w_2) = \alpha(w_1) \gh \alpha(d_k) = h_k
$$ 
it follows that $r=k$, and $\overline{w}_2 \in C_k^+$.

From $\alpha(w_1) = \alpha(w_2)$ we have $h_k \alpha( \bar{\xi} (\overline{w}_1)) = h_k \alpha (\bar{\xi} (\overline{w}_2))$, and so 
$$( \alpha(\bar{\xi}(\overline{w}_1)), \alpha(\bar{\xi}(\overline{w}_2)) ) \in \gamma_k.$$ Since $\lb C_k | W_k \rb$ is a presentation
for $\Gamma_k$, it follows that $\overline{w}_1 = \overline{w}_2$ is a consequence of the relations $W_k$. So, $\overline{w}_2$ can be obtained from $\overline{w}_1$ by applying relations from $W_k$. We shall now show that this can be translated into a sequence of applications of the relations \eqref{33} and \eqref{44} transforming $d_k \bar{\xi} (\overline{w}_1)$ into $d_k \bar{\xi} (\overline{w}_2)$.

Clearly it is sufficient to consider the case where $\overline{w}_2$ is obtained from $\overline{w}_1$ by a single application of a relation from $W_k$, so:
\[
\overline{w}_1 \equiv xuy, \quad \overline{w}_2 \equiv xvy, \quad x,y \in C_k^*, \ (u=v) \in W_k.
\]
There is a sequence of applications of \eqref{33} transforming $d_k \bar{\xi}(x)$ into $zd_t$ where $z \in B^*$. Moreover, since $x \in C_k^+$, it follows that $\alpha(\bar{\xi}(x)) \in \mathrm{Stab}(H_k)$ and so $$\alpha(d_k \bar{\xi} (x)) \gh \alpha (d_k),$$ 
implying $t=k$. Now applying \eqref{44} we obtain:
$$
d_k \bar{\xi} (\overline{w}_1) \equiv d_k \bar{\xi} (x) \bar{\xi} (u) \bar{\xi} (y) = z d_k \bar{\xi} (u) \bar{\xi}(y)
= z d_k \bar{\xi} (v) \bar{\xi} (y) = d_k \bar{\xi} (x) \bar{\xi} (v) \bar{\xi} (y) \equiv d_k \bar{\xi} (\overline{w}_2),
$$
thus completing the proof of the theorem.
\end{proof}

At present we do not know how to obtain `nice' presentations in the converse direction.
In particular, we pose:

\begin{question}
\label{presentationsquestion}
Let $T$ be a subsemigroup of finite Green index in a semigroup $S$.
Supposing that $S$ is finitely presented, is it true that:
(i) $T$ is necessarily finitely presented?
(ii) All $T$-relative Sch\"{u}tzenberger groups of $\gh^T$-classes in $S\setminus T$ are necessarily finitely presented?
\end{question} 

If the answers are affirmative, the proof is likely  to involve a combination of the methods used in
the classical Reidemeister--Schreier theory for groups, those for Rees index \cite{Ruskuc1}, and Sch\"{u}tzenberger groups \cite{Ruskuc2}. 
A major obstacle at present is the nature of the rewriting process employed in the proof of Theorem 
\ref{thm_main_fg}, whereby a word is first rewritten from left to right, and then once again from right to left.
This is in contrast with the rewritings employed in all the other contexts mentioned above, which are all essentially `one-sided'.

In the remainder of this section we give some corollaries, examples and further comments concerning Theorem \ref{thm_pres_smalltobig}

To begin with, note that Theorem~\ref{thm_pres_smalltobig} applies when
the complement is finite, in which case all of the relative Sch\"{u}tzenberger groups $\Gamma_i$ are finite and hence finitely presented, so we recover the following result, originally proved in \cite{Ruskuc1}.

\begin{corollary}[{\cite[Theorem~4.1]{Ruskuc1}}]
Let $S$ be a semigroup and let $T$ be a subsemigroup of $S$ with finite Rees index. If $T$ is finitely presented then $S$ is finitely presented.
\end{corollary}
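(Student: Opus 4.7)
The plan is to derive the result as a direct application of Theorem~\ref{thm_pres_smalltobig}. The hypothesis that $T$ has finite Rees index in $S$ means that $S \setminus T$ is a finite set. From this two observations follow immediately: first, the relative $\gh^T$-classes $\{H_i : i \in I\}$ in $S \setminus T$ partition a finite set, so $I$ is finite and $T$ has finite Green index in $S$; second, each individual class $H_i$ is itself finite, being a subset of $S \setminus T$.

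Next I would argue that each relative Sch\"{u}tzenberger group $\Gamma_i = \mathrm{Stab}(H_i)/\gamma_i$ is finite. By Proposition~\ref{schutzstabproperties}\ref{item-sch3} we have $H_i = h_i\,\mathrm{Stab}(H_i)$, so the map $\mathrm{Stab}(H_i) \to H_i$, $t \mapsto h_i t$, is surjective. By Proposition~\ref{schutzstabproperties}\ref{item-sch2} two stabiliser elements $u,v$ are $\gamma_i$-related precisely when $h_i u = h_i v$, so this map descends to a bijection $\Gamma_i \to H_i$. Therefore $|\Gamma_i| = |H_i| < \infty$, and in particular each $\Gamma_i$ is a finite group, hence finitely presented.

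Having secured that $T$ is finitely presented (by hypothesis), that the Green index is finite, and that every $\Gamma_i$ admits a finite presentation, the conclusion is then immediate from Theorem~\ref{thm_pres_smalltobig}: the generating set $A = B \cup \{d_i : i \in I\}$ is finite, and the three families of defining relations $Q$, \eqref{22}--\eqref{44} are each finite (the first because $Q$ is finite, the second and third because $A$, $B$, $I$ and the relation sets $W_i$ are all finite). This yields a finite presentation for $S$.

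There is essentially no obstacle here; the only thing to verify carefully is the cardinality identification $|\Gamma_i| = |H_i|$, which is a standard consequence of the basic properties of relative Sch\"{u}tzenberger groups recorded in Proposition~\ref{schutzstabproperties}.
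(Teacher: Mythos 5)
Your argument is correct and is exactly the paper's own (terse) justification: since $S\setminus T$ is finite, the Green index is finite and every $\Gamma_i$ is finite (via the bijection $\Gamma_i\to H_i$, which you verify carefully), hence finitely presented, so Theorem~\ref{thm_pres_smalltobig} applies directly. Your write-up merely spells out the cardinality identification $|\Gamma_i|=|H_i|$ that the paper leaves implicit.
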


In Example \ref{ex_fpcounter} and Theorems \ref{thm_aptrick1}, \ref{thm_aptrick2} below we will make use of the construction $\mathcal{S}(U,V,\phi)$, introduced in Definition~\ref{def_strong_semilattice}.
But first we record the following properties of this construction; the proofs are straightforward and are omitted:

\begin{lemma}
\label{lem_free_strong_semilattice}
Let $\phi\::\: T\rightarrow U$ be a surjective homomorphism of semigroups, and let $S=\mathcal{S}(T,U,\phi)$.
\begin{enumerate}[(i)]
\item
$T\leq S$ and $S\setminus T=U$.
\item
The relative $\gr^T$-classes, $\gl^T$-classes and $\gh^T$-classes in $U$ are precisely
$\gr$-classes, $\gl$-classes and $\gh$-classes respectively of $U$.
\item
The $T$-relative Sch\"{u}tzenberger group of an $\gh^T$-class $H\subseteq U$ is isomorphic to
the Sch\"{u}tzenberger group of $H$.
\end{enumerate}
\end{lemma}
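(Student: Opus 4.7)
The plan is to unpack the multiplication rule of $\mathcal{S}(T,U,\phi)$ and the definitions of the relative Green's relations and Sch\"{u}tzenberger groups, and verify each of (i)--(iii) directly; no ingredient beyond the construction itself is required.

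Part (i) falls out of the definition: products of two elements of $T$ are by convention computed in $T$, so $T$ is closed under multiplication, and since $S = T \dotcup U$ as a set we have $S \setminus T = U$ immediately.

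For part (ii), the key observation is that the action of $T$ on $U$ is mediated by $\phi$: for $u \in U$ and $t \in T$ we have $ut = u\phi(t)$ and $tu = \phi(t)u$, both evaluated inside $U$. Using surjectivity of $\phi$, I would conclude that for every $u \in U$,
\[
uT^1 = \{u\} \cup u\phi(T) = \{u\} \cup uU = uU^1,
\]
and dually $T^1 u = U^1 u$. Hence for $u, v \in U$ the conditions $uT^1 = vT^1$ and $uU^1 = vU^1$ are literally identical, so $\gr^T$, $\gl^T$ and $\gh^T$ restricted to $U$ coincide with $\gr$, $\gl$ and $\gh$ computed in $U$.

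For part (iii), fix an $\gh^T$-class $H \subseteq U$, which by (ii) is also an $\gh$-class of $U$. Extend $\phi$ to $\tilde{\phi}\colon T^1 \to U^1$ by sending the adjoined identity to the adjoined identity; this is easily verified to be a homomorphism. The computation from (ii) yields $ht = h\tilde{\phi}(t)$ for every $h \in H$ and $t \in T^1$, so $\tilde{\phi}$ carries $\mathrm{Stab}_T(H)$ onto $\mathrm{Stab}_U(H)$ (surjectivity following from $\phi(T)=U$), and the Sch\"{u}tzenberger congruence on $\mathrm{Stab}_T(H)$ is precisely the pullback along $\tilde{\phi}$ of the Sch\"{u}tzenberger congruence on $\mathrm{Stab}_U(H)$. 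The induced map on quotient groups is then the desired isomorphism. No step is genuinely delicate; the only mild bookkeeping point is keeping the externally adjoined identities of $T^1$ and $U^1$ distinct from any internal identities of $T$ or $U$, which is handled uniformly by the prescription $\tilde{\phi}(1) = 1$.
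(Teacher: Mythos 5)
Your proof is correct, and since the paper explicitly omits the proof of Lemma~\ref{lem_free_strong_semilattice} as ``straightforward,'' there is no argument in the paper to compare against; your direct unwinding of the multiplication rule (using $ut = u\phi(t)$ and $tu = \phi(t)u$ for $u \in U$, $t \in T$, together with surjectivity of $\phi$ to get $uT^1 = uU^1$ and $T^1u = U^1u$, and then pulling the Sch\"{u}tzenberger congruence back along $\tilde\phi$) is exactly the natural argument the authors are implicitly appealing to.
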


We now proceed to exhibit an example which shows that the the condition of finite presentability on the relative Sch\"{u}tzenberger groups in Theorem~\ref{thm_pres_smalltobig} cannot be dropped.

\begin{example}
\label{ex_fpcounter}
Let $G$ be a finitely presented group which has a non-finitely presented homomorphic image $H$,
and let $\phi\::\: G\rightarrow H$ be an epimorphism.
($H$ can be chosen to be any finitely generated, non-finitely presented group, say with $r$ generators, and $G$ to be free of rank $r$.)
Let $S=\mathcal{S}(G,H,\phi)$.
By Lemma \ref{lem_free_strong_semilattice}, $G$ has Green index 2 in $S$.
On the other hand, $S$ is not finitely presented.
To see this one can check the easy facts that $H$ is a retract of $S$, and that finite presentability is
preserved under retracts (see also \cite{Wang2000}).
Alternatively one can apply results on strong semilattices of monoids from
\cite{Araujo2001}.
\end{example}

As another application of Theorem~\ref{thm_pres_smalltobig}, we obtain a rapid proof of the following result from \cite{Ruskuc2}:

\begin{theorem}[{\cite[Corollary~3.3]{Ruskuc2}}]
\label{thm_aptrick1}
Let $S$ be a semigroup with finitely many left and right ideals. If all Sch\"{u}tzenberger groups of $S$ are finitely presented then so is $S$.
\end{theorem}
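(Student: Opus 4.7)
The plan is to apply Theorem~\ref{thm_pres_smalltobig} with $T$ chosen to be a maximal subgroup of $S$ at an idempotent in the minimum ideal. Since $S$ has finitely many $\gl$- and $\gr$-classes, it possesses a unique minimum two-sided ideal $K$ that is completely simple; in particular $K \cong \mathcal{M}(G; I, \Lambda; P)$ with $I, \Lambda$ finite and $G$ the structure group. Pick any idempotent $e \in K$ and set $T := H_e^S$, the maximal subgroup of $S$ at $e$. Then $T$ is isomorphic to the classical Sch\"{u}tzenberger group $\Gamma(H_e^S)$ of the $\gh^S$-class $H_e^S$, and hence is finitely presented by hypothesis.

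To invoke Theorem~\ref{thm_pres_smalltobig} we must verify two points: (i) $T$ has finite Green index in $S$, and (ii) each $T$-relative Sch\"{u}tzenberger group on an $\gh^T$-class in $S \setminus T$ is finitely presented. For (i), since $\gh^T$ refines $\gh^S$ and there are finitely many $\gh^S$-classes, it suffices to show that each $\gh^S$-class decomposes into only finitely many $\gh^T$-classes. The Rees matrix structure of $K$, together with the fact that $K$ is an ideal of $S$, provides the required bound in terms of $|I|$ and $|\Lambda|$, via the action of $T$ on rows and columns of $K$ and the way arbitrary elements of $S$ multiply into $K$. For (ii), once (i) holds, each $T$-relative Sch\"{u}tzenberger group $\Gamma_T(H)$ on an $\gh^T$-class $H \subseteq H'$ embeds as a finite-index subgroup of the classical Sch\"{u}tzenberger group $\Gamma(H')$ of the enclosing $\gh^S$-class $H'$, which is finitely presented by hypothesis; the Reidemeister--Schreier theorem then delivers finite presentability of $\Gamma_T(H)$.

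The main obstacle is step (i). A direct computation shows that taking $T$ to be a single maximal subgroup can fail to give the desired finite splitting: elements of $S$ whose row index differs from that of $e$ can produce $\gh^T$-singletons inside an $\gh^S$-class, so some care is needed. The resolution will likely involve either enlarging $T$ to a subsemigroup uniting several $\gh^S$-classes of $K$ that is closed under multiplication (thereby furnishing enough witnesses to collapse the $T$-action into finitely many orbits on each $\gh^S$-class), or proceeding by induction on the number of $\gj^S$-classes of $S$ so as to peel off $K$ and reduce to a semigroup with strictly fewer $\gj$-classes, where the inductive hypothesis and Theorem~\ref{thm_pres_smalltobig} can be combined with the known finite presentability of completely simple semigroups over a finitely presented group. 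With the finite Green index established, Theorem~\ref{thm_pres_smalltobig} then yields that $S$ is finitely presented.
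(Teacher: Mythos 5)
Your strategy of choosing $T$ to be a maximal subgroup of $S$ does not work, and the gap you flag in step (i) is fatal rather than repairable by the fixes you sketch. A concrete illustration: let $G$ be an infinite finitely presented group and let $S = G^0 = G \cup \{0\}$. The minimum ideal is $\{0\}$, the maximal subgroup at $0$ is trivial, so your $T = \{0\}$. For distinct $g,h \in G$ one has $gT^1 = \{g,0\} \neq \{h,0\} = hT^1$, so every element of $G$ is its own $T$-relative $\gh$-class and $T$ has infinite Green index in $S$. Enlarging $T$ to some subsemigroup of the minimum ideal $K$ runs into the same problem: whenever some $\gj^S$-class above $K$ carries a nontrivial Sch\"{u}tzenberger group, multiplication by elements of $K$ collapses into $K$ and cannot identify elements sitting in that higher class, so the $T$-relative $\gh$-classes there stay singletons. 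The induction on $\gj$-classes is also not straightforward, because removing $K$ need not leave a subsemigroup.

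The paper takes an opposite route. Rather than searching for a suitable $T$ \emph{inside} $S$, it enlarges $S$: pick a finitely generated free semigroup $F$ and an epimorphism $\phi : F \to S$ (possible since the finitely generated Sch\"{u}tzenberger groups force $S$ to be finitely generated), and form $W = \mathcal{S}(F,S,\phi)$ as in Definition~\ref{def_strong_semilattice}. By Lemma~\ref{lem_free_strong_semilattice}, $F \leq W$, $W \setminus F = S$, the $F$-relative $\gh$-classes in $S$ are exactly the ordinary $\gh$-classes of $S$ (hence finitely many), and the $F$-relative Sch\"{u}tzenberger groups are exactly the Sch\"{u}tzenberger groups of $S$, which are finitely presented by hypothesis. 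Since $F$ is free of finite rank it is finitely presented, so Theorem~\ref{thm_pres_smalltobig} applies with $(W,F)$ in place of $(S,T)$ to give that $W$ is finitely presented; then $S$, being a retract of $W$, is finitely presented. Note in particular that Theorem~\ref{thm_pres_smalltobig} is applied in the direction \emph{subsemigroup and Sch\"{u}tzenberger groups to overgroup}, with the target $S$ realised as the \emph{complement}, not as the subsemigroup --- this is the key idea your proposal is missing.
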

\begin{proof}
Let $\{ H_i: i \in I \}$ be the set of all $\gh$-classes of $S$ where for each $i \in I$, $h_i \in H_i$ is a fixed representative and  $\Gamma_i = \Gamma(H_i)$ denotes the  Sch\"{u}tzenberger  group of $H_i$.
Suppose that all the Sch\"{u}tzenberger groups of $S$ are finitely presented. In particular they are all finitely generated and from this it easily follows that $S$ itself is finitely generated. Indeed, for each $i \in I$ we may fix a finite subset $A_i$ of $\mathrm{Stab}(H_i)$ satisfying $\lb A_i / \gamma_i \rb = \Gamma_i$. Then it is easily seen that
\[
A = (\bigcup_{i \in I} A_i) \cup \{ h_i : i \in I \}
\]
is a finite generating set for $S$.

Now let $W = \mathcal{S}(F,S,\phi)$ where $F$ is an appropriate free semigroup of finite rank.
Since $S$  has only finitely many $\gh^S$-classes and all the Sch\"{u}tzenberger groups are finitely presented, by Lemma~\ref{lem_free_strong_semilattice} if follows that $F$ is a subsemigroup of $W$ with finite Green index and with all the $F$-relative  Sch\"{u}tzenberger groups of $\gh$-classes in $W\setminus F=S$ finitely presented.
Since $F$ is free of finite rank, and hence is finitely presented, it follows from Theorem~\ref{thm_pres_smalltobig} that
$W$ is finitely presented. As in Example~\ref{ex_fpcounter} this implies that $S$ is finitely presented, since $S$ is a retract of $W$.
\end{proof}

We end this section by observing that the same trick used in the previous theorem may be applied to recover the corresponding result (originally proved in \cite{GrayRuskucResFin}) for residual finiteness, by using the
following result from \cite{gray_green1}:

\begin{proposition}[{\cite[Theorem 20]{gray_green1}}]
\label{schgprf}
Suppose $T$ is a subsemigroup of finite Green index in a semigroup $S$.
Then $S$ is residually finite if and only if $T$ and all the $T$-relative Sch\"{u}tzenberger
groups of $S\setminus T$ are residually finite.
\end{proposition}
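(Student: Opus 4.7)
The plan is to handle the two implications separately; the forward direction is essentially direct, while the converse requires the Rewriting Lemma of Section~\ref{sec_rewriting} to assemble the finite quotients coherently.

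For the forward direction, assume $S$ is residually finite. Then $T$ inherits residual finiteness: any homomorphism $\phi\colon S\to F$ into a finite semigroup restricts to a homomorphism of $T$ into a finite subsemigroup of $F$, so distinct elements of $T$ can always be separated by such restrictions. For a relative Sch\"{u}tzenberger group $\Gamma_i=\mathrm{Stab}(H_i)/\gamma_i$ and distinct classes $u/\gamma_i\ne v/\gamma_i$, pick $h\in H_i$ with $hu\ne hv$ (using Proposition~\ref{schutzstabproperties}\ref{item-sch2}) and a finite quotient $\phi\colon S\to F$ with $\phi(hu)\ne\phi(hv)$. I would then define a relation on $\mathrm{Stab}(H_i)$ by $u\sim v\Leftrightarrow \phi(h'u)=\phi(h'v)$ for all $h'\in H_i$. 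This is a congruence on $\mathrm{Stab}(H_i)$: right compatibility is immediate, and left compatibility uses that $h's\in H_i$ whenever $s\in\mathrm{Stab}(H_i)$. It contains $\gamma_i$, and the induced quotient of $\Gamma_i$ is finite because each $\sim$-class is determined by the right-multiplication function $\phi(H_i)\to F$, $\phi(h')\mapsto\phi(h')\phi(u)$, and only finitely many such functions exist. Since $\phi(hu)\ne\phi(hv)$, this quotient separates $u/\gamma_i$ from $v/\gamma_i$.

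For the converse, assume $T$ and every $\Gamma_i$ is residually finite and let $s_1\ne s_2$ in $S$. The aim is to construct a finite semigroup quotient $\Phi\colon S\to F$ separating them. I would build $F$ out of three ingredients: (a) the finite address set $I^1$ indexing the $\gh^T$-classes (with the address $1$ assigned to $T$); (b) a finite quotient $T\to F_T$ of $T$ separating any required pair in $T$; (c) for each $i\in I$, a finite group quotient $\Gamma_i\to G_i$ separating any required pair of classes. The cases to consider are natural: if $s_1$ and $s_2$ have distinct addresses, ingredient (a) alone distinguishes them; if both lie in $T$, (b) does; if both lie in the same $H_i$, writing $s_k=h_iu_k$ with $(u_1,u_2)\notin\gamma_i$ reduces to (c) via the bijection $\Gamma_i\to H_i$, $u/\gamma_i\mapsto h_iu$.

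The main obstacle is fusing (a)--(c) into a genuine semigroup homomorphism, since the naive map on coordinates need not respect products. The Rewriting Lemma (Lemma~\ref{lemma_bookkeeping}) is precisely the tool needed: it expresses any product $h_is_1\dots s_n$ as $t_1\dots t_n h_j$ with $t_k\in T^1$ and $j$ computed by iterating the finite action $\lambda$, so $S$ acts in a controlled way on the finite set $I^1$. To define $\Phi$, I would either carry out a wreath-product-style construction using the action of $S$ on $I^1$ together with the finite quotients $F_T$ and $G_i$, or equivalently introduce an equivalence on $S$ identifying two elements exactly when they share the same address and their internal labels (in $F_T$ if both lie in $T$, or in $G_i$ if both lie in $H_i$) agree. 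Checking that this equivalence is a congruence of finite index reduces to verifying that the maps $\rho,\lambda,\sigma,\tau$ from Section~\ref{sec_rewriting} interact correctly with the chosen finite quotients; this compatibility check, guided by the rewriting identities $\alpha(w)h_i=h_{\rho(w,i)}\alpha(\sigma(w,i))$ and $h_i\alpha(w)=\alpha(\tau(i,w))h_{\lambda(i,w)}$, is where essentially all the technical content of the argument lies.
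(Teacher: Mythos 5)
The paper does not actually prove this proposition: it is stated as a citation of {\cite[Theorem 20]{gray_green1}}, so there is no in-paper argument to compare yours against. I therefore assess your proposal on its own merits.

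Your forward direction is correct and essentially complete. Restricting finite quotients of $S$ to $T$ handles $T$. For a Sch\"utzenberger group $\Gamma_i$, your relation $u\sim v\Leftrightarrow\phi(h'u)=\phi(h'v)$ for all $h'\in H_i$ is indeed a congruence on $\mathrm{Stab}(H_i)$ (left compatibility uses $h'w\in H_i$ for $w\in\mathrm{Stab}(H_i)$, as you note), it contains $\gamma_i$, and the map to functions $\phi(H_i)\to F$ is well defined and injective on $\sim$-classes, so the quotient is finite. Choosing $\phi$ to separate $hu$ from $hv$ then separates $u/\gamma_i$ from $v/\gamma_i$. This is a clean argument.

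The converse, however, is not a proof: it is a plan. You correctly identify the three ingredients (the finite index set $I^1$ with its $\lambda$/$\rho$ action, a finite quotient of $T$, and finite quotients of the $\Gamma_i$) and you correctly flag that the Rewriting Lemma is what makes the action of $S$ on $I^1$ tractable, but you then explicitly defer the entire construction of the congruence on $S$ to ``where essentially all the technical content of the argument lies'' without carrying it out. Two genuine gaps remain. First, the quotients $F_T$ and $G_i$ cannot be chosen independently to separate a single pair: the congruence on $T$ must be refined enough to respect the induced action on $I^1$ and to be compatible with all the $\gamma_i$ (via $\mathrm{Stab}(H_i)\le T^1$), which in practice means intersecting finitely many finite-index congruences on $T$ and $\Gamma_i$ until the system is coherent; you never say how to arrange this. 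Second, it is not enough to record ``address plus internal label'': to get a congruence one typically needs to record how $s$ transforms each $h_j$ on both sides (the data $\rho(s,\cdot)$, $\sigma(s,\cdot)$, $\lambda(\cdot,s)$, $\tau(\cdot,s)$ reduced modulo the chosen quotients), and then verify via the rewriting identities that this richer tuple is multiplicative. Until that is written down and checked, the converse direction stands unproved.
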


Recall that a semigroup $S$ is \emph{residually finite} if for any pair $x, y \in S$ of distinct elements
there exists a homomorphism $\phi$ from $S$ into a finite semigroup such that $x \phi \neq y \phi$.
Clearly this is equivalent to the existence of a congruence with finitely many classes separating $x$ from $y$.

\begin{theorem}[{\cite[Theorem 7.2]{GrayRuskucResFin}}]
\label{thm_aptrick2}
Let $S$ be a semigroup with finitely many left and right ideals. Then $S$ is residually finite if and only if all of the Sch\"{u}tzenberger groups of $S$ are residually finite.
\end{theorem}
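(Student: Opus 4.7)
The plan is to mirror the proof of Theorem~\ref{thm_aptrick1}, substituting the appeal to Theorem~\ref{thm_pres_smalltobig} with an application of Proposition~\ref{schgprf}. Since $S$ has finitely many left and right ideals, it has only finitely many $\gh$-classes. Pick any surjection $\phi\colon F\to S$ from a free semigroup $F$ (for instance $F$ free on the set $S$) and form $W=\mathcal{S}(F,S,\phi)$. By Lemma~\ref{lem_free_strong_semilattice}, $F$ has finite Green index in $W$, and the $F$-relative Sch\"{u}tzenberger groups of the $\gh^F$-classes in $W\setminus F=S$ are (isomorphic to) the ordinary Sch\"{u}tzenberger groups of $S$. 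Free semigroups are residually finite, so the whole theorem will drop out once the following intermediate claim is established.

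\medskip
\noindent\textbf{Intermediate claim.} \emph{$W$ is residually finite if and only if $S$ is residually finite.}
\medskip

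For the forward implication, $S$ is a retract of $W$ via the map which is $\phi$ on $F$ and the identity on $S$; residual finiteness is inherited by retracts (indeed by arbitrary subsemigroups), giving $W$ r.f.\ $\Rightarrow$ $S$ r.f. For the converse, assume $S$ is residually finite and separate an arbitrary pair $u\neq v$ in $W$ into three cases. (i) If $u,v\in S$, use the retraction $\pi\colon W\to S$ composed with a finite separating quotient of $S$. (ii) If $u,v\in F$, observe that $S$ is an ideal of $W$, so the Rees quotient gives a homomorphism $W\to W/S\cong F^0$; since $F$ is residually finite this quotient separates $u$ from $v$ through a finite image. (iii) If (say) $u\in F$ and $v\in S$, again pass to $W\to F^0$, under which $u\mapsto u$ and $v\mapsto 0$; any homomorphism from $F$ into a finite semigroup $M$ extends to $F^0\to M^0$ sending $0$ to $0$, separating the non-zero image of $u$ from the image $0$ of $v$.

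With the claim in hand, apply Proposition~\ref{schgprf} to the pair $F\leq W$: $W$ is residually finite iff $F$ and all $F$-relative Sch\"{u}tzenberger groups of $\gh^F$-classes of $W\setminus F=S$ are residually finite. Since $F$ is always residually finite, this collapses to: $W$ is residually finite iff all Sch\"{u}tzenberger groups of $S$ are residually finite. Combined with the intermediate claim this is exactly the statement of the theorem.

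The main obstacle is case (iii) of the intermediate claim; the other two cases reduce immediately to known facts, and the success of the whole argument hinges on the observation that $S$ is an ideal of $W$, so that the Rees quotient supplies a clean homomorphism $W\to F^0$ which detects membership in $F$ versus $S$.
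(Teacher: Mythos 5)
Your proof is correct and takes essentially the same route as the paper: form $W=\mathcal{S}(F,S,\phi)$ with $F$ a free semigroup surjecting onto $S$, prove that $W$ is residually finite iff $S$ is, and then apply Proposition~\ref{schgprf} to the pair $F\leq W$ together with Lemma~\ref{lem_free_strong_semilattice}. The only cosmetic difference is in your case~(iii): the paper separates $u\in F$ from $v\in S$ directly by the two-class congruence on $W$ with classes $F$ and $S$ (a congruence precisely because $S$ is an ideal of $W$), which is simpler than passing through $F^0\to M^0$; in fact this case is the easiest of the three rather than, as you suggest, the main obstacle.
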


\begin{proof}
Let $\phi\::\: F\rightarrow S$ be an epimorphism from a (not necessarily finitely generated this time) free semigroup onto $S$,
and let $W=\mathcal{S}(F,S,\phi)$.
It is not hard to see that $W$ is residually finite if and only if
$S$ is residually finite. The direct part  of this claim is trivial since $S$ is a subsemigroup of $W$. For the converse, given $x,y \in W$ with $x \neq y$ we have the following possibilities: If $x \in F$ and $y \in S$ (or vice versa) then the congruence with two classes $F$ and $S$ separates $x$ from $y$. If $x,y \in F$ then we may identify all the elements in $S$ and apply the fact that $F$ is residually finite to separate $x$ from $y$ with a finite index congruence.
Finally, if $x,y \in S$ then since $S$ is residually finite there is a finite index congruence $\sigma$ on $S$ separating $x$ from $y$, and this may be extended to a finite index congruence on $W$ by taking the preimage of $\sigma$ under $\phi$, completing the proof of our assertion.

On the other hand since $F$ has finite Green index in $W$, and $F$ is residually finite, it follows from Proposition \ref{schgprf} that $W$ is residually finite if and only if all of the $F$-relative Sch\"{u}tzenberger groups of $\gh^F$-classes in $S$ are residually finite. But by Lemma~\ref{lem_free_strong_semilattice} these are precisely the Sch\"{u}tzenberger groups of $S$, and this completes the proof of the theorem.
\end{proof}

\section{Malcev presentations}
\label{secmalcev}

In the previous section we outlined the difficulties, related to the specific nature of our rewriting process, that at present prevent us from proving
that finite presentability is preserved when passing to subsemigroups of finite Green index.
In this section we prove such a result for so-called Malcev presentations, which are presentations of 
semigroups that can be embedded into groups.
(For a survey of the theory of Malcev presentations, see \cite{c_survey}.)
We do this by dispensing with rewriting altogether, and using properties of universal groups instead.

A congruence $\sigma$ on a semigroup $S$ is said to be a
\defterm{Malcev congruence} if $S/\sigma$ is embeddable in a group.
If $\{\sigma_i : i \in I\}$ is a set of Malcev congruences on $S$,
then $\sigma = \bigcap_{i \in I} \sigma_i$ is also a Malcev congruence
on $S$. This is true because $S/\sigma_i$ embeds in a group $G_i$ for
each $i \in I$, so $S/\sigma$ embeds in $\prod_{i \in I} S/\sigma_i$,
which in turn embeds in $\prod_{i \in I} G_i$.

Let $A^+$ be a free semigroup; let $\rho \subseteq A^+ \times A^+$ be
any binary relation on $A^+$. Let $\rho^M$ denote the smallest
Malcev congruence containing $\rho$ --- namely,
\[
\rho^M = \bigcap \left\{\sigma : \sigma \supseteq \rho, \text{
  $\sigma$ is a Malcev congruence on }A^+\right\}.
\]
Then $\langle A \:|\: \rho\rangle$ is a
\defterm{Malcev presentation} for (any semigroup isomorphic to)
$A^+/\rho^M$.

The main result of this section (generalising \cite[Theorem 1]{crr_finind}) is:

\begin{theorem}
\label{thmmalcev1}
Let $S$ be a group-embeddable semigroup, and let $T$ be a subsemigroup of finite Green index in $S$. 
Then $S$ has a finite Malcev presentation if and
only if $T$ has a finite Malcev presentation.
\end{theorem}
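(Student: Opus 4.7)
The plan is to reduce the theorem to the analogous statement about finite presentability of groups. I will use the standard characterization for Malcev presentations (see \cite{c_survey}): a group-embeddable semigroup $S$ admits a finite Malcev presentation if and only if $S$ is finitely generated and its universal group $U(S)$ is finitely presented as an abstract group. Combined with Theorem \ref{thm_main_fg}, which gives that $S$ is finitely generated iff $T$ is, the theorem reduces to showing that $U(S)$ is finitely presented iff $U(T)$ is. The key step will be to prove that $U(T)$ has finite index in $U(S)$; once this is established, the classical Reidemeister--Schreier machinery (finite presentability is preserved in both directions under passage to a finite-index subgroup or overgroup) immediately yields the result.

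To establish $[U(S):U(T)]<\infty$, fix an embedding $S \hookrightarrow G$ of $S$ into a group $G$, and identify $U(S) = \langle S\rangle_G$ and $U(T) = \langle T\rangle_G$. Using the $\gh^T$-class representatives $\{h_i : i \in I^1\}$ with $h_1 = 1 \in G$, for every $s \in S$ the relation $s \, \gr^T \, h_i$ (with $i = 1$ when $s \in T$) supplies a $t \in T^1$ with $s = h_i t$, so that $s \in h_i U(T)$. Thus the finite family $X = \{h_i U(T) : i \in I^1\}$ of right cosets contains the coset of every element of $S$.

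The crucial observation is that $X$ is invariant under left multiplication by the whole of $U(S)$ on $U(S)/U(T)$; since $U(T) = h_1 U(T) \in X$, this forces $gU(T) \in X$ for every $g \in U(S)$, hence $X = U(S)/U(T)$ and $[U(S):U(T)] \leq |I^1|$. To see the invariance, fix $s \in S$ and $i \in I^1$: then $sh_i \in S$, so by the previous paragraph $sh_i \in h_k U(T)$ for some $k \in I^1$. Hence left multiplication by $s$ sends $X$ into $X$. This map is a bijection of $U(S)/U(T)$ and $X$ is finite, so its restriction is a bijection of $X$, and consequently left multiplication by $s^{-1}$ also preserves $X$. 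Because every element of $U(S) = \langle S \rangle_G$ is a product of factors from $S \cup S^{-1}$, it follows that $X$ is preserved by $U(S)$, completing the proof of the key lemma.

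The main obstacle foreseen is the one highlighted in the discussion after Question \ref{presentationsquestion}: the two-sided nature of the rewriting used in Theorem \ref{thm_pres_smalltobig} obstructs a direct Reidemeister--Schreier-style proof for ordinary presentations. The point of the plan above is to bypass rewriting entirely by working inside the universal group, where the short orbit-finiteness argument replaces the need for any rewriting at all.
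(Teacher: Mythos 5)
The overall scaffolding you set up is the right one, and matches the paper's: reduce to the statement ``$U(S)$ is finitely presented iff $U(T)$ is finitely presented'' via Proposition~\ref{prop:finsgmpresifffinugpres} and the finite-generation result, and then invoke the Reidemeister--Schreier theorem once a finite-index containment of universal groups is established. The orbit argument on the finite set of cosets $X=\{h_iU(T):i\in I^1\}$ is also a nice, self-contained way to get finiteness of a subgroup index.

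The gap is in the sentence ``fix an embedding $S\hookrightarrow G$ and identify $U(S)=\langle S\rangle_G$ and $U(T)=\langle T\rangle_G$.'' The first identification is fine provided you take $G$ to be $U(S)$ itself. The second is not an identification but a claim requiring proof: in general, for a subsemigroup $T$ of $S$, the subgroup $\langle T\rangle_{U(S)}$ is only a \emph{homomorphic image} of the universal group $U(T)$ (because $T$ embeds in and generates $\langle T\rangle_{U(S)}$, so the universal property supplies a surjection $U(T)\twoheadrightarrow\langle T\rangle_{U(S)}$), and nothing forces this surjection to be an isomorphism. Your orbit argument correctly shows $[U(S):\langle T\rangle_{U(S)}]\leq |I^1|<\infty$, and hence that $\langle T\rangle_{U(S)}$ is finitely presented iff $U(S)$ is. But since finite presentability transfers neither up nor down along an arbitrary group surjection, this tells you nothing about $U(T)$ unless you prove $U(T)\cong\langle T\rangle_{U(S)}$. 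That isomorphism is precisely the technical heart of the paper's Theorem~\ref{thm:gorisoug}: one establishes a dichotomy (Corollary~\ref{corol:sgrpifftgrp}) that either both $S$ and $T$ are groups — in which case $U(S)=S$, $U(T)=T$ and the index is just group index — or neither is, and then Lemma~\ref{lem:rlquotients} shows every $s\in S\setminus T$ is both a left and a right quotient of elements of $T$, so that the argument of \cite[Theorem~3.1]{crr_finind} gives $U(T)\cong U(S)$. Your proposal bypasses this dichotomy and quotient lemma entirely, and so leaves the crucial identification unjustified.
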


The proof of Theorem \ref{thmmalcev1} is at the end of the section.
We begin by recalling
the concept of universal groups of semigroups and
their connection to Malcev presentations. For further background on
universal groups  refer to \cite[Chapter~12]{clifford_semigroups2};
for their interaction with Malcev presentations, see
\cite[\S1.3]{c_phdthesis}.

Let $S$ be a group-embeddable semigroup. The \defterm{universal group}
$U$ of $S$ is the largest group into which $S$ embeds and which $S$
generates, in the sense that all other such groups are homomorphic
images of $U$.
The concept of a universal group can be defined for all semigroups, not
just those that are group-embeddable. However, the definition above
will suffice for the purposes of this paper. The universal group of a
semigroup is unique up to
isomorphism.

\begin{proposition}[{\cite[Construction~12.6]{clifford_semigroups2}}]
\label{prop:ugsamepres}
Let $S$ be a group-embeddable semigroup. Suppose $S$ is presented by (an ordinary semigroup presentation) 
$\langle A\:|\: \rho\rangle$
for some alphabet $A$ and set of defining relations $\rho$. Then
the group defined by the presentation $\langle A\:|\: \rho\rangle$ is {\rm[}isomorphic to{\rm]} the universal group of $S$.
\end{proposition}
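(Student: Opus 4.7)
The plan is to verify that the group $G$ defined by $\langle A\:|\:\rho\rangle$ satisfies the defining universal property of the universal group of $S$: namely, $S$ embeds in $G$, the image of $S$ generates $G$, and every semigroup homomorphism from $S$ into a group factors uniquely through this embedding. Once this is established, uniqueness (up to isomorphism) of the universal group forces $G$ to be isomorphic to it.

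First I would construct a natural map $\psi\colon S\to G$. Identify $S$ with $A^+/\eta$, where $\eta$ is the smallest semigroup congruence on $A^+$ containing $\rho$. The composition $A^+\hookrightarrow F(A)\twoheadrightarrow G$ sends every pair in $\rho$ to an equality, so it descends to a well-defined semigroup homomorphism $\psi\colon S\to G$. Because the letters of $A$ generate $G$ as a group and each lies in $\psi(S)$, the image $\psi(S)$ generates $G$.

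Next I would show that $\psi$ is injective; this is the only step with any real content. Using the hypothesis that $S$ is group-embeddable, fix any embedding $\iota\colon S\hookrightarrow K$ into a group $K$. Composing with $A^+\twoheadrightarrow S\hookrightarrow K$ gives a semigroup homomorphism $A^+\to K$ satisfying every relation in $\rho$; by the universal property of the free group $F(A)$, it extends to a group homomorphism $F(A)\to K$ killing every element $uv^{-1}$ with $(u,v)\in\rho$, hence factoring through a group homomorphism $\bar\iota\colon G\to K$ with $\bar\iota\circ\psi=\iota$. Since $\iota$ is injective, so is $\psi$.

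Finally, for the universal property, let $\phi\colon S\to H$ be any semigroup homomorphism into a group $H$. The map $A\to H$ sending each $a$ to $\phi$ of the element it represents extends to a group homomorphism $F(A)\to H$ under which each $(u,v)\in\rho$ becomes an equality (as the relation already holds in $S$ and $\phi$ is a homomorphism). Hence it factors through $G$, giving $\bar\phi\colon G\to H$ with $\bar\phi\circ\psi=\phi$; uniqueness of $\bar\phi$ follows from the fact that $\psi(S)\supseteq A$ generates $G$. Putting Steps 1--3 together, $G$ with the embedding $\psi$ realises the universal group of $S$, which is the claim. The potential obstacle is Step 2, but the group-embeddability hypothesis is precisely what supplies the external group $K$ needed to invoke the universal property of the group presentation and thereby furnish a left inverse to $\psi$ for free.
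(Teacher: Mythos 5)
Your argument is correct: the only substantive point is injectivity of the canonical map $S\to G$, and your factorisation of a chosen embedding $S\hookrightarrow K$ through $G$ handles it exactly as needed, with the universal property giving the rest. The paper itself offers no proof of this proposition (it is quoted from Clifford and Preston, Construction~12.6), and your proof is precisely the standard argument underlying that cited construction, so there is nothing to reconcile beyond the one-line appeal you already make to the uniqueness of the universal group in the paper's sense of ``largest group that $S$ embeds in and generates.''
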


The following two results show the connection between universal groups
and Malcev presentations. The proof of the first result is somewhat long and
technical; the second is a fairly direct corollary of the first.

\begin{proposition}[{\cite[Proposition~1.3.1]{c_phdthesis}}]
\label{prop:sgmpresiffugpres}
Let $S$ be a semigroup that embeds into a group. If
$\langle A\:|\: \rho\rangle$ is a Malcev presentation for $S$, then the universal
group of $S$ is presented by $\langle A\:|\: \rho\rangle$ considered as a group presentation. Conversely, if
$\langle A\:|\: \rho\rangle$ is a presentation for the universal group of $S$,
where $A$ represents a generating set for $S$ and $\rho \subseteq A^+
\times A^+$, then $\langle A\:|\: \rho\rangle$ is a Malcev presentation for $S$.
\end{proposition}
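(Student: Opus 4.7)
My plan is to prove both implications by playing the universal property of $U$ (as the largest group generated by $S$) against the universal property of the group presentation $\langle A\:|\:\rho\rangle$, using at each step the fact that a subsemigroup of a group gives rise to a Malcev congruence.

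For the forward direction, assume $\langle A\:|\:\rho\rangle$ is a Malcev presentation for $S$, so $S \cong A^+/\rho^M$, and let $G$ denote the group defined by $\langle A\:|\:\rho\rangle$. I would first observe that the kernel of the natural map $\psi\::\:A^+ \to G$ is a Malcev congruence containing $\rho$, since $\psi(A^+)$ is a subsemigroup of the group $G$. Hence $\ker\psi \supseteq \rho^M$, giving a well-defined semigroup homomorphism $\phi\::\:S \to G$ whose image generates $G$ as a group. To show $\phi$ is injective, I would invoke the universal group $U$ of $S$: the relations $\rho$ hold in $S$ and hence in $U$, so the group presentation $\langle A\:|\:\rho\rangle$ produces a group homomorphism $\pi\::\:G \to U$ extending the map on $A$, and the composition $\pi\circ\phi\::\:S \to U$ then agrees with the embedding $S \hookrightarrow U$ on the generators (the images of $A$) and therefore everywhere, so in particular $\phi$ is injective. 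With $\phi$ injective, $S$ embeds in $G$ and generates it as a group, so the universal property of $U$ produces a surjection $\iota\::\:U \twoheadrightarrow G$; both $\pi\circ\iota$ and $\iota\circ\pi$ fix $S$ pointwise, and since $S$ generates both groups, both composites are the identity, whence $G \cong U$.

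For the converse, suppose $\langle A\:|\:\rho\rangle$ is a group presentation for $U$ with $A$ mapped to a semigroup-generating set for $S$. I would form $S' = A^+/\rho^M$, which is group-embeddable by the very construction of $\rho^M$, and let $U'$ be its universal group. Applying the forward direction to the Malcev presentation $\langle A\:|\:\rho\rangle$ of $S'$, the universal group $U'$ is presented as a group by $\langle A\:|\:\rho\rangle$, so $U' \cong U$; under this isomorphism the canonical embedding $S' \hookrightarrow U'$ identifies $S'$ with the subsemigroup of $U$ generated by (the image of) $A$, which by hypothesis is $S$. Thus $S \cong A^+/\rho^M$, as required.

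The main obstacle is the injectivity of $\phi\::\:S \to G$ in the forward direction: there is no direct control over which words in $A^+$ become equal in $G$ beyond those forced by $\rho^M$, and the trick is to use the universal group $U$ as an external witness, exploiting that $\ker(A^+\twoheadrightarrow S\hookrightarrow U) = \rho^M$ because $S\hookrightarrow U$ is injective, so the map $G \to U$ produced by the group presentation composes with $\phi$ to recover the embedding of $S$ into $U$.
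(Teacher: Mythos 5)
Your argument is correct, but note that the paper itself offers no proof of this statement: it is quoted from \cite[Proposition~1.3.1]{c_phdthesis}, with the remark that the proof there is ``somewhat long and technical'', so there is no in-paper proof to compare against. Your route is a clean universal-property argument: for the forward direction you observe that the kernel of $A^+\to G$ (with $G$ the group defined by $\langle A\,|\,\rho\rangle$) is a Malcev congruence containing $\rho$, hence contains $\rho^M$, giving $\phi\colon S\to G$; injectivity of $\phi$ is extracted by composing with the homomorphism $\pi\colon G\to U$ supplied by von Dyck's theorem and checking on generators that $\pi\circ\phi$ is the canonical embedding; and then the maximality of $U$ yields the inverse map, with both composites being the identity because they fix generating sets. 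The converse, applying the forward direction to $S'=A^+/\rho^M$ and matching the two copies of the group presented by $\langle A\,|\,\rho\rangle$ generator-by-generator so that $S'$ is carried onto the subsemigroup of $U$ generated by the images of $A$, namely $S$, is also sound. The only point to make explicit is your use of the universal property: the paper's one-sentence definition (``all other such groups are homomorphic images of $U$'') must be read in the strong form that the surjection $U\twoheadrightarrow G$ restricts on $S$ to the given embedding $\phi$ --- this is what \cite[Construction~12.6]{clifford_semigroups2} actually provides, and your proof needs it when you assert that $\pi\circ\iota$ and $\iota\circ\pi$ fix $S$ (respectively $\phi(S)$) pointwise. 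With that reading spelled out, your proof is complete and arguably more streamlined than the thesis argument the paper alludes to, since it avoids any syntactic analysis of the Malcev congruence $\rho^M$ beyond its definition as an intersection.
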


In other words, Malcev presentations for $S$ are precisely group presentations for its universal group
involving no inverses of generators.

\begin{proposition}[{\cite[Corollary~1.3.2]{c_phdthesis}}]
\label{prop:finsgmpresifffinugpres}
If a group-embeddable semigroup $S$ has a finite Malcev presentation,
then its universal group $G$ is finitely presented. Conversely, if the
universal group of $S$ is finitely presented and $S$ itself is
finitely generated, then $S$ admits a finite Malcev presentation.
\end{proposition}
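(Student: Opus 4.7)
For the forward implication, I just appeal to Proposition \ref{prop:sgmpresiffugpres}: a finite Malcev presentation $\langle A \:|\: \rho\rangle$ of $S$ is, by that proposition, also a finite group presentation of the universal group $G$, so $G$ is finitely presented.

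The converse is the substantive direction. I fix a finite semigroup generating set $A$ for $S$ (which exists by hypothesis); since $S$ generates $G$, the same $A$ is a finite group-generating set for $G$, and standard Tietze transformations applied to any finite group presentation of $G$ yield a finite group presentation $\langle A \:|\: R\rangle$ of $G$ on these generators, with $R \subseteq F(A) \times F(A)$ finite (where $F(A)$ denotes the free group on $A$, and $R$ may involve inverses). The plan is then to replace $R$ by a finite set of \emph{positive} relations $\rho \subseteq A^+ \times A^+$ that still presents $G$ as a group; Proposition \ref{prop:sgmpresiffugpres} will immediately upgrade such a group presentation into the desired finite Malcev presentation of $S$.

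The technical heart of the argument will be the following claim: the kernel $K$ of the canonical epimorphism $F(A) \twoheadrightarrow G$ is the normal closure in $F(A)$ of the set
\[
P = \{ uv^{-1} : u, v \in A^+ \text{ and } u = v \text{ in } G\}.
\]
To prove it I will set $N = \langle\langle P\rangle\rangle$; the inclusion $N \subseteq K$ is immediate and produces a canonical surjection $F(A)/N \twoheadrightarrow G$. Conversely, because any two positive words equal in $G$ are identified in $F(A)/N$, the composite $A^+ \hookrightarrow F(A) \twoheadrightarrow F(A)/N$ factors as $A^+ \twoheadrightarrow S \xrightarrow{\phi} F(A)/N$, and composing $\phi$ with $F(A)/N \twoheadrightarrow G$ recovers the embedding $S \hookrightarrow G$, which forces $\phi$ itself to be injective. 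Thus $F(A)/N$ is a group into which $S$ embeds and which $S$ generates, so the universal property of $G$ supplies an inverse $G \to F(A)/N$ to $F(A)/N \twoheadrightarrow G$; both compositions agree with the identity on the generating set $S$, so $F(A)/N \cong G$ and $N = K$.

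Given the claim the proof concludes quickly: finite presentability of $G$ on $A$ makes $K$ finitely normally generated in $F(A)$, so writing each of its finitely many normal generators as a finite product of conjugates of elements of $P$ extracts a finite subset $\rho \subseteq A^+ \times A^+$ whose normal closure is still $K$; Proposition \ref{prop:sgmpresiffugpres} then promotes $\langle A \:|\: \rho\rangle$ from a finite group presentation of $G$ into a finite Malcev presentation of $S$. I expect the only genuinely non-routine step to be the universal-property argument identifying $K$ with the normal closure of $P$; the rest is standard manipulation of Tietze moves and normal closures.
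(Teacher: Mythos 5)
Your proposal is correct and follows essentially the route the paper intends: the result is quoted from \cite{c_phdthesis} as a fairly direct corollary of Proposition \ref{prop:sgmpresiffugpres}, and both of your directions rest on that proposition together with a finite-extraction argument for normal closures (pass from a presentation of the universal group on the finite generating set $A$ to finitely many positive relations). The only real difference is that your central claim --- that the kernel of $F(A)\twoheadrightarrow G$ is the normal closure of the set of positive relations of $S$ over $A$ --- is essentially Proposition \ref{prop:ugsamepres} (equivalently Proposition \ref{prop:sgmpresiffugpres}) applied to the tautological semigroup presentation of $S$ on $A$, so it could have been cited rather than re-proved by hand via the universal property.
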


Our strategy in proving Theorem \ref{thmmalcev1} relies on a dichotomy:
either $S$ and $T$ are both groups, in which
case the problem reduces to the finite presentability of groups, or else
$S$ and $T$ have isomorphic universal groups. The key technical observation is the following:

\begin{lemma}
\label{lem:rlquotients}
Let $G$ be a group, let $S$ be a subsemigroup of $G$, and let $T$ be a subsemigroup of finite Green index in $S$.
 Then either $T$
is a group or for any $s \in S\setminus T$ there exist $u_s,v_s,w_s,x_s \in T$
with $s = u_sv_s^{-1}$ and $s = w_s^{-1}x_s$ in $G$.
\end{lemma}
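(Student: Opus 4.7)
The plan is to establish the contrapositive form of the ``or'': assuming $T$ is not a group, I show that for every $s\in S\setminus T$ both requested decompositions exist. First observe that $s=u_s v_s^{-1}$ with $u_s,v_s\in T$ is equivalent to $sT\cap T\neq\varnothing$, and dually $s=w_s^{-1}x_s$ with $w_s,x_s\in T$ is equivalent to $Ts\cap T\neq\varnothing$ (given $v\in T$ with $sv\in T$, set $u:=sv$; the reverse direction is just rearrangement). By symmetry it therefore suffices to prove: if some $s\in S\setminus T$ satisfies $sT\cap T=\varnothing$, then $T$ is a subgroup of $G$.

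So assume $sT\cap T=\varnothing$ and pick an arbitrary $t\in T$. Because $T$ is closed, every $st^k$ ($k\geq 1$) lies in $sT\subseteq S\setminus T$, and hence the whole sequence $s,st,st^2,\ldots$ lives inside $S\setminus T$. Finite Green index forces only finitely many $\gr^T$-classes in $S\setminus T$, so pigeonhole yields $0\leq i<j$ with $st^i\,\gr^T\,st^j$, giving $st^i=st^j x$ for some $x\in T^1$. Canceling $s$ in the ambient group $G$ gives $x=t^{-(j-i)}$. Setting $m:=j-i\geq 1$, either $t^{-m}\in T$ (if $x$ is a genuine element of $T$) or $t^m=1_G$ (if $x$ is the formal identity of $T^1$, which when interpreted in $G$ is $1_G$). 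In the first case $t^{-1}=t^{m-1}\cdot t^{-m}$ lies in $T$ as a product of elements of $T$ (with the obvious direct reading when $m=1$); in the second case $t$ has finite order $d$ in $G$, and $t^{-1}=t^{d-1}$ lies in $T$ (if $d=1$ then $t=1_G\in T$ and $t^{-1}=t$). Either way $t^{-1}\in T$, and consequently $1_G=t\cdot t^{-1}\in T$. Since $t\in T$ was arbitrary, $T$ is closed under inversion in $G$ and contains $1_G$, i.e.\ $T$ is a subgroup of $G$, contradicting the standing assumption that $T$ is not a group.

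The second decomposition $s=w_s^{-1}x_s$ follows from the dual argument, applied on the left: the finitely many $\gl^T$-classes in $S\setminus T$ together with sequences of the form $s,ts,t^2s,\ldots$ yield $Ts\cap T\neq\varnothing$ by identical reasoning. The main subtlety I expect is the bookkeeping around the formally adjoined identity of $T^1$: when pigeonhole returns only the bare equality $st^i=st^j$ in $S$ (corresponding to the formal identity being the witness), this equation must be reinterpreted in $G$ as $t^{j-i}=1_G$ and handled by the finite-order sub-case above. Past this point the argument is essentially just pigeonhole on the orbit of $s$ combined with cancellation in $G$; the Rewriting Lemma and the other machinery developed in earlier sections play no role here.
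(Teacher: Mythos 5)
Your proof is correct and takes essentially the same approach as the paper's: consider the sequence $s, st, st^2, \ldots$, apply pigeonhole to the finitely many relative $\gr^T$-classes in $S\setminus T$, and cancel $s$ in $G$ to conclude that $t$ is invertible in $T$. The only difference is presentational --- the paper fixes a particular non-unit $t\in T$ and derives the decomposition directly (since otherwise that $t$ would be a unit), whereas you run the same mechanism in contrapositive form, showing that a failure of the decomposition for a single $s$ forces every $t\in T$ to be invertible and thus $T$ to be a group.
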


\begin{proof}
Let $J$ be the group of units of $T$, if $T$ is a monoid, and otherwise set $J = \varnothing$. If $J = T$ there is nothing to
prove; so suppose $T \neq J$. 
Let $s \in S\setminus T$. Pick any $t \in T \setminus J$ and consider the elements $s, st, st^2,\ldots$. 
Since $T$ has finite Green index in $S$, either we have $st^i\in T$ for some $i$, or else
$st^i\gr st^j$ for some $i<j$.
If $st^i\in T$ the elements $u_s=st^i$ and $v_s=t^i$ belong to $T$ and satisfy $u_sv_s^{-1}=s$.
On the other hand, if $st^i\gr st^j$ then there exists $u\in S^1$ such that $st^ju=st^i$, which implies
$t^{j-i}u=1$, and contradicts the assumption $t\not\in J$.
Similar reasoning
using $\rel{L}$ yields $w_s$ and $x_s$.
\end{proof}

Any finite cancellative semigroup is a group, so for the class of cancellative semigroups the property of being a group is a 
finiteness condition. The following result shows that for cancellative semigroups this property is preserved when taking finite Green index subsemigroups or extensions. 

\begin{proposition}
\label{lem_cancellative}
Let $S$ be a cancellative semigroup and let $T$ be a subsemigroup with finite Green index in $S$. 
Then $S$ is a group if and only if $T$ is a group. 
\end{proposition}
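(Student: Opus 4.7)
The plan is to prove each direction via a pigeonhole argument on the powers of a single element, exploiting the finiteness of the number of $\gh^T$-classes in $S\setminus T$.

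For $(\Leftarrow)$, assume $T$ is a group with identity $e$. Since $e^2=e$ and $S$ is cancellative, $e$ is also the identity of $S$: from $e(es)=es$ and left cancellation one gets $es=s$, and dually $se=s$. Fix $s\in S$. The first step is to show some power $s^k$ lies in $T$: if not, finite Green index forces $s^i\gh^T s^j$ for some $i<j$, and from the $\gr^T$-component $s^i=s^ja$ with $a\in T^1$, left cancellation of $s^i$ gives $e=s^{j-i}a$. If $a=1$ is the external identity, then $s^{j-i}=e\in T$; if $a\in T$, then multiplication by $a^{-1}\in T$ gives $s^{j-i}=a^{-1}\in T$; both contradict the supposition that $\{s^n : n\geq 1\}\subseteq S\setminus T$. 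With $s^k\in T$ in hand, set $b=(s^k)^{-1}\in T$; then $s^{k-1}b$ is a right inverse and $bs^{k-1}$ a left inverse of $s$, and they coincide by the usual monoid argument, exhibiting $s$ as invertible.

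For $(\Rightarrow)$, assume $S$ is a group, and fix $t\in T$; the goal is to show $t^{-1}\in T$. If $t$ has finite order $m$ in $S$, then $t^{-1}=t^{m-1}$ is either $e=t$ (when $m=1$) or a positive power of $t$ (when $m\geq 2$), so $t^{-1}\in T$. If $t$ has infinite order, the negative powers $t^{-1},t^{-2},\ldots$ are pairwise distinct elements of $S$. Assuming they all lie in $S\setminus T$, finite Green index produces $i<j$ with $t^{-i}\gh^T t^{-j}$, and from $t^{-j}=t^{-i}b$ with $b\in T^1$ one reads off $b=t^{-(j-i)}$ in $S$. The case $b=1$ yields $t^{-j}=t^{-i}$, contradicting infinite order; the case $b\in T$ gives $t^{-(j-i)}\in T$, contradicting the standing assumption. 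Hence some $t^{-n}$ lies in $T$, and then $t^{-1}=t^{-n}\cdot t^{n-1}\in T$ (the factor $t^{n-1}$ being omitted when $n=1$).

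The main delicacy in both directions lies in the case-split around the external identity of $T^1$: one must carefully distinguish the formal symbol $1$, which acts as identity in $S^1$ but is not itself an element of $S$ (forcing equalities such as $s^i=s^j$ and hence $s^{j-i}=e$ via cancellation), from witnesses $a,b\in T$, where the group structure of $T$ provides inverses. A secondary point in the backward direction is checking that the one-sided inverse obtained from $s^k\in T$ is genuinely two-sided; this follows from the classical monoid identity $y=y(xz)=(yx)z=z$ once $e$ has been identified as the identity of $S$.
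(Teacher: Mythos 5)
Your proof is correct. For the direction ``$T$ a group implies $S$ a group'' your argument is essentially the paper's: cancellativity promotes the identity $e$ of $T$ to a two-sided identity of $S$, a pigeonhole on the positive powers of a given $s\in S$ (using finiteness of the number of $\gh^T$-classes in $S\setminus T$) forces some $s^k\in T$, and invertibility of $s^k$ then yields invertibility of $s$ itself. Your explicit case-split on whether the connecting witness lies in $T$ or is the adjoined identity $1\in T^1$ tidies up a point the paper's phrasing passes over quickly, though the paper's version does go through. For the direction ``$S$ a group implies $T$ a group'' the paper gives no argument at all, simply citing Theorem~5.1 and Proposition~5.3 of the earlier Green-index paper; you instead supply a self-contained proof by running the same pigeonhole idea on the negative powers $t^{-1},t^{-2},\ldots$ of a given $t\in T$ after separately disposing of the finite-order case. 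This mirrors the converse direction and keeps the whole proposition internal to the paper, which is a modest gain in self-containment at the cost of a slightly longer proof.
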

\begin{proof}
In \cite[Theorem~5.1 \& Proposition~5.3]{gray_green1}
it is shown that $T$ is a group if $S$ is a group.

For the converse, suppose that $T$ is a group, say with identity element $e$. Since $S$ is cancellative and $e$ is an idempotent, $e$ is a two-sided identity in $S$. Therefore $S$ is a monoid and $T$ is a subgroup of the group of units of $S$. 
Let $s \in S$ be arbitrary. We claim that $s^i \in T$ for some $i \in \mathbb{N}$. Otherwise, since the Green index is finite there would exist $i < j$ with $s^i \gr^T s^j$, implying $s^j = s^i t$ for some $t \in T$ which by left cancellativity yields $s^{j-i} = t \in T$, a contradiction. Therefore $s^i$ belongs to the group of units of $S$ for some $i \in \mathbb{N}$ which is clearly only possible if $s$ itself is invertible. Thus every element is invertible and we conclude that $S$ is a group. 
\end{proof}

\begin{corollary}
\label{corol:sgrpifftgrp}
Let $G$ be a group, let $S$ be a subsemigroup of $G$, and let $T$ be a subsemigroup of finite Green index in $S$.
Then $T$ is a
group if and only if $S$ is a group.
\end{corollary}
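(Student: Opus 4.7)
The plan is to observe that this corollary is an immediate consequence of the more general Proposition \ref{lem_cancellative} applied to the cancellative semigroup $S$. Since $S$ is a subsemigroup of the group $G$, and every subsemigroup of a group inherits cancellativity (both left and right), $S$ is a cancellative semigroup. The hypothesis that $T$ has finite Green index in $S$ is exactly what is needed to invoke Proposition \ref{lem_cancellative}, and the biconditional conclusion there is precisely what we want.

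So the proof would be roughly two sentences: first, noting $S\leq G$ implies $S$ is cancellative; second, applying Proposition \ref{lem_cancellative} to the pair $T\leq S$ to conclude that $T$ is a group if and only if $S$ is a group. No further use of Lemma \ref{lem:rlquotients} or of the universal group machinery is required for this particular statement, although those tools are set up here because they will be needed for Theorem \ref{thmmalcev1}. There is essentially no obstacle, since the real work was already done in establishing Proposition \ref{lem_cancellative}, where both directions were handled (one via \cite[Theorem 5.1 \& Proposition 5.3]{gray_green1} for the ``$S$ a group implies $T$ a group'' direction, and the other by the periodicity-style argument showing every $s\in S$ has some power $s^i\in T$).
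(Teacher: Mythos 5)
Your proof is correct and is exactly the paper's intended argument: the corollary appears immediately after Proposition~\ref{lem_cancellative} with no proof supplied, precisely because $S$, being a subsemigroup of a group, is cancellative, so the proposition applies verbatim. Nothing more is needed.
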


\begin{theorem}
\label{thm:gorisoug}
Let $S$ be a group-embeddable semigroup, and let $T$ be a subsemigroup of finite
Green index. Then either $S$ and $T$ are both groups or $S$ and
$T$ have isomorphic universal groups.
\end{theorem}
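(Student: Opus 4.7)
The plan is to use the dichotomy supplied by Corollary \ref{corol:sgrpifftgrp}: either both $S$ and $T$ are groups, giving the first alternative, or neither is. In the latter case, set $G = U(S)$ and $U = U(T)$ and let $\phi\colon U \to G$ be the group homomorphism obtained from the universal property of $U$ applied to the inclusion $T \hookrightarrow S \hookrightarrow G$, so that $\phi$ restricts to the identity on $T$. By Lemma \ref{lem:rlquotients} every $s \in S$ lies in $T T^{-1}$ inside $G$, so the subgroup of $G$ generated by $T$ already contains $S$; since $S$ generates $G$, so does $T$, and hence $\phi$ is surjective. The heart of the proof is injectivity of $\phi$.

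For injectivity I would use Tietze transformations on the presentation of $S$ supplied by Theorem \ref{thm_pres_smalltobig}. Pick a semigroup presentation $\lb B \mid Q \rb$ of $T$ and presentations of each relative Sch\"{u}tzenberger group $\Gamma_i$; Theorem \ref{thm_pres_smalltobig} then presents $S$ on $A = B \cup \{ d_i : i \in I \}$ with defining relations $Q$ together with \eqref{22}--\eqref{44}, and by Proposition \ref{prop:ugsamepres} the same data presents $G$ as a group. I would add each of the relations $d_i = u_i v_i^{-1}$ (which is a group-theoretic consequence of $Q$ and \eqref{33}, because the word $d_i v_i$ rewrites via \eqref{33} to the word in $B^+$ representing $h_i v_i = u_i \in T$, exactly as in the proof of Theorem \ref{thm_pres_smalltobig}), and then eliminate each $d_i$. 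The resulting group presentation has generators $B$ and relations $Q$ together with the substituted forms of \eqref{22}--\eqref{44}; establishing that these are group-theoretic consequences of $Q$ will identify $G$ with $\lb B \mid Q \rb_{\mathrm{group}} = U$ and make $\phi$ an isomorphism.

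The crux is to use \emph{both} representations $h_i = u_i v_i^{-1} = w_i^{-1} x_i$ provided by Lemma \ref{lem:rlquotients}. The defining equations $h_i v_i = u_i$ and $w_i h_i = x_i$ in $S$ immediately combine to yield the bridge equation $w_i u_i = x_i v_i$ in $T$, which in $U$ becomes the consistency identity $u_i v_i^{-1} = w_i^{-1} x_i$, so that each $h_i$ has a well-defined meaning in $U$ regardless of which decomposition is used. To derive the substituted \eqref{33} from $h_j b = \tau(j,b) h_k$, multiply on the left by $w_j$ and on the right by $v_k$: using $w_j h_j = x_j$ and $h_k v_k = u_k$ this collapses to the genuine $T$-equation $x_j b v_k = w_j \tau(j,b) u_k$, which when interpreted in $U$ and combined with the bridges rearranges into $u_j v_j^{-1} b = \tau(j,b) u_k v_k^{-1}$. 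The substituted \eqref{22} with $a \in B$ is analogous, multiplying $a h_i = h_j \sigma(a,i)$ on the left by $w_j$ and on the right by $v_i$ to produce the $T$-equation $w_j a u_i = x_j \sigma(a,i) v_i$. The remaining cases $a = d_l$ of \eqref{22} reduce to those already handled: using \eqref{22} for letters $b \in B$ one can rewrite both $w_l d_l d_i$ and $w_l d_k \sigma$ into words lying over $B^+$ without any $d$-letters, after which the desired substituted relation becomes a consequence of $Q$ and the bridges for $h_l$, $h_i$, $h_k$.

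Finally, the relations \eqref{44} trivialize in the group-embeddable setting: since $S \leq G$ is cancellative, every Sch\"{u}tzenberger congruence $\gamma_i$ is trivial, so $\Gamma_i = \mathrm{Stab}(H_i)$ is a subgroup of $T$ and the two sides of each relation in \eqref{44} already represent equal elements of $T$, making \eqref{44} a consequence of $Q$. The main obstacle, which the two-representation trick overcomes, is that in the non-abelian group $G$ a quotient identity $u v^{-1} = u' v'^{-1}$ does not by itself translate into an identity inside $T$, and a single-quotient substitution $h_i \mapsto u_i v_i^{-1}$ leaves intractable factors of the form $v_j^{-1} \sigma v_i$ stranded in the middle of the derivation; the bridge $w_i u_i = x_i v_i$ coming from combining the right and left quotient decompositions of each $h_i$ is precisely what clears these denominators and delivers the required $T$-relations.
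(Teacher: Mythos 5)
The paper's proof of Theorem~\ref{thm:gorisoug} is essentially two lines: after invoking Corollary~\ref{corol:sgrpifftgrp} for the dichotomy and Lemma~\ref{lem:rlquotients} for the quotient decompositions, it simply observes that the hypotheses of \cite[Theorem~3.1]{crr_finind} are met and cites that result. Your proposal instead reconstructs the universal-group isomorphism from scratch by Tietze-transforming the presentation of Theorem~\ref{thm_pres_smalltobig}. The dichotomy and surjectivity steps are fine; the ``bridge'' identity $w_iu_i=x_iv_i$ is a genuinely nice observation; and your treatment of the substituted versions of \eqref{33}, of \eqref{22} restricted to $a\in B$, and of \eqref{44} (using cancellativity to kill $\gamma_i$) is correct.

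However, there is a gap in your treatment of \eqref{22} when $a=d_l$. You claim that by multiplying $d_ld_i=d_k\sigma(d_l,i)$ on the left by $w_l$ and rewriting with \eqref{22} for $b\in B$, both $w_ld_ld_i$ and $w_ld_k\sigma(d_l,i)$ reduce to words over $B^+$ with no $d$-letters. This is not so. From $w_ld_ld_i$, pushing $d_l$ through $w_l$ (using $w_lh_l=x_l\in T$) does give $w_l'd_i$ with $w_l'\in B^+$ representing $x_l$, but pushing $d_i$ through $w_l'$ then yields $d_qw_l''$ with $q=\rho(w_l',i)$; here $q=1$ holds precisely when $x_lh_i\in T$, which is not guaranteed. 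Likewise, pushing $d_k$ through $w_l$ in $w_ld_k\sigma(d_l,i)$ yields $d_pw_l'''\sigma(d_l,i)$ with $p=\rho(w_l,k)$, and $p=1$ holds only if $w_lh_k\in T$. Moreover, even when neither reduces fully, the residual indices $p,q$ need not coincide: Lemma~\ref{lemma_bookkeeping}' only controls them up to relative $\gr$-class, while the two products $x_lh_i$ and $w_lh_k$ are different elements of $S$. So the ``reduces to cases already handled'' step is not established, and neither $w_l$-premultiplication nor the analogous $v_i$-postmultiplication trick (which runs into the same representative-mismatch problem via Lemma~\ref{lemma_bookkeeping}(i)) obviously repairs it. The paper's route via \cite[Theorem~3.1]{crr_finind} avoids all of this because it never passes through the $\gh$-class presentation at all; if you want to keep the presentation-theoretic approach, the $a=d_l$ case needs a new argument, not an appeal to the $a\in B$ case.
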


\begin{proof}
Let $G$ be the universal group of $S$ and view $S$ and $T$ as being
subsemigroups of $G$. By Corollary~\ref{corol:sgrpifftgrp} either
both $S$ and $T$ are groups or neither are groups. 
In the former case,
the proof is complete.
In the latter case, Lemma~\ref{lem:rlquotients} says that every element of $S\setminus T$ can be
expressed as a right or left quotient of elements of $T$. The proof of
\cite[Theorem~3.1]{crr_finind} thus applies to show that the universal
group of $T$ is isomorphic to $G$.
\end{proof}

The following is now immediate:

\begin{corollary}
\label{corol:ugfi}
Let $S$ be a group-embeddable semigroup, and let $T$ be a subsemigroup of finite
Green index. Let $G$ and $H$ be the universal groups of $S$ and
$T$ respectively. Then $G$ contains a subgroup of finite index isomorphic to $H$.
\end{corollary}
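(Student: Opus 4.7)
The plan is to derive the corollary as an immediate case split using Theorem~\ref{thm:gorisoug}, which already supplies precisely the dichotomy we need.

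First I would address the case when $S$ and $T$ are both groups. The universal group of a group is (up to isomorphism) the group itself, so $G = S$ and $H = T$. Hence $H$ sits inside $G$ via the inclusion $T \subseteq S$, and only finite index remains to be verified. For a subgroup of a group, the Green index coincides with the ordinary group index, as recalled in Section~\ref{sec_intro} (and immediate from the definition of the relative $\gh$-relation once one observes that a group has a unique $\gh^T$-class of the form $tT^1$ for each left coset of $T$, and similarly on the right). Therefore the hypothesis of finite Green index translates directly into $[G:H] < \infty$.

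In the remaining case, neither $S$ nor $T$ is a group, and Theorem~\ref{thm:gorisoug} produces an isomorphism $G \cong H$. Then $G$ itself is a subgroup of $G$ of index one that is isomorphic to $H$, so the conclusion holds trivially.

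No genuine obstacle arises: the entire substantive content has already been absorbed into Theorem~\ref{thm:gorisoug}, which in turn rests on Lemma~\ref{lem:rlquotients} (expressing elements of $S\setminus T$ as one-sided quotients of elements of $T$) and on the argument of \cite[Theorem~3.1]{crr_finind}. The only small point requiring checking is the identification of Green index with ordinary group index in the first case, which is a direct unravelling of definitions.
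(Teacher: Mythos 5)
Your proof is correct and takes the same route the paper intends: the paper simply states that the corollary is ``immediate'' from Theorem~\ref{thm:gorisoug}, and the implicit reasoning is exactly your dichotomy (both groups, hence finite Green index becomes finite group index; or isomorphic universal groups, giving index one). The only thing to tidy is the parenthetical justification that Green index equals group index in the group case: the $\gr^T$-classes are the cosets $sT$, the $\gl^T$-classes are the cosets $Ts$, and finitely many $\gh^T$-classes is equivalent to finitely many cosets, but this is a standard remark already made in the paper's introduction, so your appeal to it is legitimate.
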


We are now in a position to prove our main result of this section.

\begin{proof}[of Theorem \ref{thmmalcev1}]
Let $G$ and $H$ be the universal groups of $S$ and $T$,
respectively. By Corollary~\ref{corol:ugfi}, $H$ is a finite index
subgroup of $G$; hence, by the Reidemeister--Schreier Theorem
\cite[\S II.4]{lyndon_cgt}, $G$ is finitely presented if and only
if $H$ is finitely presented. Furthermore, from Theorem~\ref{thm_finitegeneration} above $S$ is finitely generated if and only if $T$ is finitely generated.

Now, by the observations in the foregoing paragraph and by using Proposition~\ref{prop:finsgmpresifffinugpres} twice, one sees that:\\
$\phantom{\iff} S$ has a finite Malcev presentation\\
$\iff S$ is finitely generated and $G$ is finitely presented \\
$\iff S$ is finitely generated and $H$ is finitely presented \\
$\iff T$ is finitely generated and $H$ is finitely presented \\
$\iff T$ has a finite Malcev presentation.
\end{proof}

\begin{remark}
It is natural to ask whether preservation of finite presentability when passing to subsemigroups of finite Green index holds for other types of presentations, e.g. presentations of cancellative semigroups, left or right cancellative semigroups, or inverse semigroups.
The corresponding results for finite Rees index are known
(\cite[Theorems 2, 3]{crr_finind} and \cite[Theorem 1.2]{inverserees}),
but rely on the result for the `ordinary' presentations \cite[Theorem 1.3]{Ruskuc1}.
Consequently, for Green index, these results either have to wait for a positive solution to Problem \ref{presentationsquestion}, or else entirely new methods are required.
\end{remark}

The method of proof used above reduces either to the case where $S$
and $T$ are both groups, or to the case where, as for finite Rees
index, every element of $S$ can be expressed as a right or left
quotient of $T$. In light of this, one might suspect that perhaps
finite Green index for group-embeddable semigroups reduces either to
finite group index or to finite Rees index. The following example
dispels these suspicions:

\begin{example}
Let $n \in \nset$. Let $S = \zset \times (\nset \cup\{0\})$ and let $T
= \zset \times ((\nset \cup\{0\}) - \{1,\ldots,n\})$. Then $S$ and $T$
are both group-embeddable and $T$ is a subsemigroup of $S$. Furthermore,
\[
S-T = \zset \times \{1,\ldots,n\}.
\]
Let $k \in \{1,\ldots,n\}$. Then for any $z \in \zset$, the
$\rel{R}^T$-class of $(z,k)$ is $\zset \times \{k\}$. Since $S$ is
commutative, these are the $\rel{L}^T$ and thus the
$\rel{H}^T$-classes. Therefore there are only $n$ different
$\rel{H}^T$-classes in $S-T$. Thus $T$ has finite Green index in
$S$. Since $S-T$ is infinite, $T$ does not have finite Rees index in
$S$. Furthermore, neither $S$ nor $T$ are groups.
\end{example}

\section{The Word Problem}
\label{secwp}

In this section we consider some questions relating to decidability. Recall that for a semigroup $S$ finitely generated by a set $A$ we say that $S$ has a \emph{soluble word problem} (with respect to $A$) if there exists an algorithm which, for any two words $u,v \in A^+$, decides whether the relation $u=v$ holds in $S$ or not. For finitely generated semigroups it is easy to see that solubility of the word problem does not depend on the choice of (finite) generating set for $S$.

The following result concerning the word problem essentially follows from the arguments in the proof of Theorem~\ref{thm_pres_smalltobig}.

\begin{theorem}\label{thm_soluble}
Let $S$ be a finitely generated semigroup with $T$ a subsemigroup of $S$ with finite Green index. Then $S$ has soluble word problem  if and only if $T$ and all the relative Sch\"{u}tzenberger groups of $S \setminus T$ have soluble word problem.
\end{theorem}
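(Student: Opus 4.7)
The plan is to extract an algorithm from the rewriting procedure developed for Theorem~\ref{thm_pres_smalltobig}. Since $T$ has finite Green index, the index set $I$ is finite, and together with a finite generating set for $S$ this makes all of the auxiliary data ($\rho,\lambda,\sigma,\tau$ restricted to the generators) into finite lookup tables that can be used effectively in both directions.

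For the forward direction, I would assume $S$ has soluble word problem with respect to some finite generating set $A$. Theorem~\ref{thm_finitegeneration} produces an explicit finite generating set $B$ for $T$ in which each generator is a concrete element of $T$ expressible as a word in $A^+$. Thus to decide $u=v$ for $u,v\in B^+$ I rewrite to $\tilde u,\tilde v\in A^+$ and appeal to the word problem in $S$, since equality in a subsemigroup coincides with equality in the ambient semigroup. Analogously, Theorem~\ref{thm_schutz_gen} supplies a finite generating set $X_i$ for $\Gamma_i$ in which each generator is the $\gamma_i$-class of a specific element of $\mathrm{Stab}(H_i)\subseteq T$, and hence expressible as a word in $A^+$. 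For $u,v\in X_i^+$ with associated words $\tilde u,\tilde v\in A^+$, the equality $u=v$ in $\Gamma_i$ is equivalent to $h_i\tilde u = h_i\tilde v$ in $S$, which is decidable by hypothesis (fixing $h_i$ once and for all as some word in $A^+$).

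For the backward direction, assume $T$ and each $\Gamma_i$ have soluble word problem, and work with the generating set $A=B\cup\{d_i:i\in I\}$ of Theorem~\ref{thm_pres_smalltobig}, which is legitimate since solubility is independent of the finite generating set. Given $w_1,w_2\in A^+$, I run the three-stage rewriting from the proof of that theorem: adjoin $d_1$ on the right and push it leftward using \eqref{22} to obtain $d_{i_0}w'$ with $w'\in B^*$; push $d_{i_0}$ rightward using \eqref{33} to obtain $w''d_j$; and if $j\ne 1$, push $d_j$ back leftward using \eqref{22} to obtain $d_k w'''$, where $w'''\in\overline{\xi}_k(C_k^+)$ by the analysis in the proof of Theorem~\ref{thm_pres_smalltobig}. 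In parallel with the third stage I extract a preimage $\overline{w}\in C_k^+$ of $w'''$ by recording which $C_k$-generator each intermediate $\sigma$-lookup corresponds to. Comparison of $w_1$ and $w_2$ then splits into cases: if exactly one lands in $B^+$ they are unequal; if both land in $B^+$ invoke the word problem of $T$ on $w_1'',w_2''$; and if both land in the form $d_{k_i}w_i'''$, they are equal if and only if $k_1=k_2$ and $\overline{w}_1=\overline{w}_2$ in $\Gamma_{k_1}$, each of which is decidable.

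The main technical obstacle, as in Theorem~\ref{thm_pres_smalltobig}, is the concurrent extraction of the $C_k^+$-word $\overline{w}$ during the third stage: this requires committing at the outset to a preferred expression of each $\sigma$-value $\sigma(b,l)$ (with $h_l \,\gl^T\, h_k$) as an $\overline{\xi}_k$-image of a specific $C_k^+$-word. Such a table of expressions is finite by the finite Green-index hypothesis together with the finiteness of $A$, so this is bookkeeping rather than new mathematics, and is precisely the content that must be transported from the proof of Theorem~\ref{thm_pres_smalltobig}.
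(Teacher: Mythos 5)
Your proposal is correct and follows essentially the same two routes as the paper: for one direction you run the three-stage rewriting of Theorem~\ref{thm_pres_smalltobig} as an effective algorithm (using finiteness of $I$ and $A$ to make $\rho,\lambda,\sigma,\tau$ into finite lookup tables) and split into cases according to where the canonical forms land; for the other direction you reduce the word problems of $T$ and each $\Gamma_i$ to that of $S$ by translating generators into $A^+$ and, for the Sch\"utzenberger groups, testing $h_i\tilde u=h_i\tilde v$ in $S$. Your explicit $k_1=k_2$ check and the remark about fixing expressions of the $\sigma$-values as $\overline{\xi}_k$-images are exactly the bookkeeping implicit in the paper's proof, so there is no substantive divergence.
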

\begin{proof}
\begin{sloppypar}
Assume that $T$ has soluble word problem and that all of the relative Sch\"{u}tzenberger groups $\Gamma_i$ of $S \setminus T$ have soluble word problem. By Theorem~\ref{thm_main_fg}, $T$ is generated by a finite subset $B \subseteq T$ say, and $S = \lb A \rb$ where $A = B \cup \{ h_i : i \in I \}$. Theorem~\ref{thm_pres_smalltobig} gives a (possibly infinite) presentation for $S$ but where the sets of relations \eqref{22} and \eqref{33} are both finite since $A$, $B$ and $I$ are all finite.
\end{sloppypar}

Let $w_1, w_2 \in A^+$. As in the proof of Theorem~\ref{thm_pres_smalltobig} using the relations  \eqref{22} and \eqref{33} we can rewrite $w_1$ into a word of the form $w_1'' d_j$  where $w_1'' \in B^+$ and similarly rewrite $w_2$ into a word of the form $w_2'' d_k$ with  $w_2'' \in B^+$. By Lemma~\ref{lemma_bookkeeping}(i) $w_1$ represents an element of $T$ if and only if $j=1$, while $w_2$ represents an element of $T$ if and only if $k=1$. So if $j=1$ and $k \neq 1$ (or vice versa) we deduce that $w_1 \neq w_2$. If $j=k=1$ then $w_i = w_i'' \in B^+$  ($i = 1,2$) and $w_1 = w_2$ if and only if $w_1'' = w_2''$ in $T$ which can be decided since $T$ has soluble word problem.

The remaining possibility is that $j \neq 1$ and $k \neq 1$ so $w_1$ and $w_2$ both represent elements from $S \setminus T$. Now, again following the argument in the proof of Theorem~\ref{thm_pres_smalltobig} using the relations \eqref{22} and \eqref{33} we deduce:
\[
w_1 = d_r \overline{\xi} (\overline{w_1}),\ 
w_2 = d_r \overline{\xi} (\overline{w_2})
\]
where $\overline{w_1}, \overline{w_2} \in C_k^+$. Now $w_1 = w_2$ in $S$ if and only if $\overline{w_1} = \overline{w_2}$ in the Sch\"{u}tzenberger group $\Gamma_k$ and this can be decided since $\Gamma_k$ has soluble word problem by assumption.

For the converse, suppose that $S$ has soluble word problem. Then immediately $T$ has soluble word problem since it is a finitely generated subsemigroup of $S$. Finally let $H$ be a $T$-relative $\gh$-class in $S \setminus T$, with fixed representative $h \in H$. The group $\Gamma = \Gamma(H) = \mathrm{Stab}(H) / \gamma$ is finitely generated by Theorem~\ref{thm_schutz_gen}. Let $Y$ be a finite subset of $\mathrm{Stab}(H)$ such that $\lb Y / \gamma \rb = \Gamma(H)$. Let $w_1, w_2 \in     (Y / \gamma)^*$ Then $w_i = w_i' / \gamma$ where $w_i' \in B^*$ ($i = 1,2$) and $w_1 = w_2$ if and only if $hw_1' = hw_2'$ in $S$ which is decidable since $S$ is assumed to have soluble word problem.
\end{proof}

As with other results in this article,
Theorem~\ref{thm_soluble} generalises the well-known classical result from group theory and the corresponding result for finite Rees index subsemigroups proved in \cite{Ruskuc1}.
Just as for
Theorems~\ref{thm_aptrick1} and \ref{thm_aptrick2},
Theorem~\ref{thm_soluble} may be used to prove that a finitely generated semigroup with finitely many left and right ideals has soluble word problem if and only if all of its Sch\"{u}tzenberger groups have soluble word problem.

A finitely generated group $G$ has only finitely many subgroups of any given finite index $n$.
If $G$ is finitely presented, then a list of generating sets of all these subgroups can be obtained effectively.
In \cite[Corollary~32]{gray_green1} it was shown that the first of these two facts generalises to semigroups: a finitely generated semigroup has only finitely many subsemigroups of any given finite Green index $n$. We now show that the second statement does not generalise to semigroups and Green index.

\begin{theorem}
There  does not exist an algorithm which would take as its input a finite semigroup presentation (defining a semigroup $S$)
and a natural number $n$, and which would return as the output a list of generators of all subsemigroups of $S$ of Green index $n$.
\end{theorem}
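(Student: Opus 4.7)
The goal is to rule out any such algorithm, so my plan is to reduce from a known undecidable problem about finitely presented semigroups, most naturally the word problem: fix once and for all a finitely presented semigroup $M=\lb A\mid R\rb$ with undecidable word problem, and fix a pair of words $u_0,v_0\in A^+$ for which ``$u_0=v_0$ in $M$'' is algorithmically undecidable. Under the assumption that the algorithm in the theorem exists, the plan is to turn it into a decision procedure for this word problem, producing the desired contradiction.

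The core step is a uniform construction: from an arbitrary pair $(u,v)$ of words over the generators of $M$, produce algorithmically a finite semigroup presentation $\mathfrak{P}_{u,v}$ for a semigroup $S_{u,v}$ and a natural number $n$ such that $S_{u,v}$ possesses a subsemigroup of Green index $n$ with a prescribed, algorithmically recognisable generating set if and only if $u=v$ holds in $M$. A natural vehicle for this construction is the strong semilattice of semigroups $\mathcal{S}(T,U,\phi)$ of Definition~\ref{def_strong_semilattice}, combined with an encoding of the word $(u,v)$ into the homomorphism $\phi$. Lemma~\ref{lem_free_strong_semilattice} gives tight control over the relative Green relations in the upper layer, so that the Green-index-$n$ subsemigroups of $S_{u,v}$ can be forced to reflect precisely the status of $u=v$ in $M$.

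Given the construction, running the putative algorithm on input $(\mathfrak{P}_{u,v},n)$ would output a finite list of finite generating sets, finiteness being guaranteed by \cite[Corollary~32]{gray_green1}. Checking whether the prescribed generating set appears in the list is then a purely finitary matter: comparing finitely many finite tuples of words, which is decidable. This yields a decision procedure for ``$u=v$ in $M$'', contradicting the undecidability of the word problem.

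The main obstacle is the construction step: one has to ensure that the subsemigroup structure of $S_{u,v}$ at Green index exactly $n$ faithfully encodes the bit ``$u=_M v$'', with no spurious subsemigroups of Green index $n$ muddying the reduction, and that the encoding is transparent enough that the generating sets output by the hypothetical algorithm can be decoded algorithmically. Choosing $U$ and $\phi$ in $\mathcal{S}(T,U,\phi)$ to be sufficiently rigid (for instance with a single or prescribed small number of $\gh$-classes so that Lemma~\ref{lem_free_strong_semilattice} pins down the $T$-relative Sch\"{u}tzenberger groups) is the delicate design problem, and resolving it is where the real work of the proof lies.
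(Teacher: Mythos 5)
Your proposal is an honest outline but it stops short precisely at the step that constitutes the whole difficulty: you never exhibit a construction of $\mathfrak{P}_{u,v}$ with the required property, and you explicitly flag that ``resolving it is where the real work of the proof lies.'' As written, this is a plan for a proof, not a proof. There is also a subtler issue lurking in your ``purely finitary'' decoding step: the hypothetical algorithm returns generating \emph{sets}, not canonical subsemigroups, and deciding whether a given finite set generates the same subsemigroup as a prescribed one is not decidable in general (subsemigroup membership is undecidable in finitely presented semigroups). So even once the construction were in hand, you would need the prescribed target subsemigroup to be syntactically recognizable from any generating set --- for instance a singleton idempotent --- and this constraint should be built into the design from the outset rather than left implicit.

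The paper sidesteps all of this with a two-line reduction of a quite different flavour. Let $S_0$ be $S$ with a zero $0$ adjoined and take $T=\{0\}$. For any $u\neq v$ in $S$ one has $uT^1=\{0,u\}\neq\{0,v\}=vT^1$, so every element of $S_0\setminus\{0\}=S$ is alone in its relative $\gh^T$-class; hence the Green index of $\{0\}$ in $S_0$ equals its Rees index, namely $|S_0\setminus\{0\}|=|S|$ (up to the additive constant in the convention). In other words, for this particular pair $(S_0,\{0\})$ the Green index problem \emph{is} the Rees index problem, and the latter was already shown undecidable by Ru\v{s}kuc and Thomas \cite[Theorem~5.5]{Ruskuc&Thomas}. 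That argument, in turn, does reduce from an undecidable problem about finitely presented semigroups, but the point is that the present theorem does not need to redo that reduction: it simply transports it across the observation that Green index collapses to Rees index on the subsemigroup $\{0\}$. Note also that $\{0\}$ is exactly the kind of ``syntactically recognizable'' target I mentioned above, so the decoding difficulty never arises. Your strong-semilattice strategy might ultimately be made to work, but it recreates, at considerable cost and with nontrivial gaps still to fill, machinery that the zero-adjunction trick makes unnecessary.
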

\begin{proof}
Let $S_0$ denote $S$ with a zero element $0$ adjoined. The Green index of the subsemigroup $\{ 0 \}$ in $S_0$ is equal to $| S_0 \setminus \{0 \} | = |S|$.
This observation along with the argument \cite[Theorem~5.5]{Ruskuc&Thomas} suffices to prove the theorem.
\end{proof}

\section{Growth}
\label{secgrowth}

A (discrete) \emph{growth function} is a monotone non-decreasing function from $\mathbb{N}$ to $\mathbb{N}$. For growth functions $\alpha_1, \alpha_2$ we write $\alpha_1 \preccurlyeq \alpha_2$ if there exist natural numbers $k_1, k_2 \geq 1$ such that
$\alpha_1(t) \leq k_1 \alpha_2(k_2 t)$
for all $t \in \mathbb{N}$. We define an equivalence relation on growth functions
by $\alpha_1 \sim \alpha_2$ if and only if $\alpha_1 \preccurlyeq \alpha_2$ and $\alpha_2 \preccurlyeq \alpha_1$.
The $\sim$-class $[\alpha]$ of a growth function $\alpha$ is called the
\emph{growth type} or just \textit{growth} of the function $\alpha$.

Let $S$ be a semigroup and let $X$ be a subset of $S$. Note that we do not insist here that $X$ generates $S$. Then for $s \in S^1$ and $n \in \mathbb{N}$ we define:
\[
\outball_X(s,n) = \{ s x_1 \ldots x_r \in S : x_i \in X^1, r \leq n \}
\]
and call this the \emph{out-ball of radius $n$ around $s$ with respect to $X$}.
For a
semigroup $S$ generated by a finite set $A$ the function
\[
g_S: \mathbb{N} \rightarrow \mathbb{N},
\quad
g_S(m) = |\outball_A(1,m)|
\]
is called the \emph{growth function} of the semigroup $S$.
It is well-known (and easily proved) that the growth type of a semigroup is independent of the choice of finite generating set. Also note that if $T$ is a finitely generated subsemigroup of a finitely generated semigroup $S$ then $g_T \preccurlyeq g_S$ (since we may take a finite generating set for $S$ that contains a finite generating set for $T$). In general the converse is not true, but it is in the case that $S$ is a group and $T$ is a subgroup of finite index (this follows from the more general fact that growth type is a quasi-isometry invariant; see \cite[p115, Section 50]{delaHarpe2000}).
Here we shall show that this fact is more generally true for subsemigroups of finite Green index. In fact, the result goes through under far weaker hypotheses as we now see.
The following result is very straightforward to prove and it is quite likely that it is already known. We include it here for completeness.

\begin{proposition}
Let $S$ be a semigroup and let $T$ be a subsemigroup of $S$.
Suppose that $T$ is finitely generated and that there exists a finite subset $R$ of $S^1$ with $1 \in R$ and $S^1 = RT^1$. Then $S$ and $T$ are both finitely generated and have the same type of growth.
\end{proposition}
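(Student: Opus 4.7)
My plan is to verify three claims in sequence: (a) $S$ is finitely generated; (b) $g_T \preccurlyeq g_S$; (c) $g_S \preccurlyeq g_T$. Claims (a) and (b) are routine, while (c) is the substantive step.

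For (a), I would take a finite generating set $B$ for $T$ and set $A = B \cup (R \setminus \{1\})$, which is finite; every $s \in S \subseteq S^1 = RT^1$ factors as $rt$ with $r \in A^1$ and $t \in \langle B \rangle^1 \subseteq \langle A \rangle^1$, so $A$ generates $S$. For (b), since $B \subseteq A$, we have $\outball_B(1,n) \subseteq \outball_A(1,n)$ as subsets of $S$, and hence $g_T(n) \leq g_S(n)$.

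For (c), the plan is to exhibit a constant $M$ such that $\outball_A(1,n) \subseteq R \cdot \outball_B(1,Mn)$; this immediately yields $g_S(n) \leq |R| \, g_T(Mn)$ and hence $g_S \preccurlyeq g_T$. First, for each $a \in A^1$ fix a decomposition $a = r_a t_a$ with $r_a \in R$, $t_a \in T^1$, and for each pair $r,r' \in R$ fix a decomposition $rr' = \rho_{r,r'}\sigma_{r,r'}$. Let $F_0 = \{t_a : a \in A^1\} \cup \{\sigma_{r,r'} : r,r' \in R\} \subseteq T^1$, a finite set. Given $s = a_1 \cdots a_n \in \outball_A(1,n)$, I expand $s = r_{a_1}t_{a_1} \cdots r_{a_n}t_{a_n}$ in $S^1$ and iteratively rearrange this product by pushing each $R$-letter leftward past intervening $T$-letters (using the rule $tr = \rho\sigma$ afforded by $S^1 = RT^1$) and combining adjacent $R$-letters (using the fixed $rr' = \rho\sigma$ decompositions). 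A bookkeeping analysis shows that each such processing step adds $O(1)$ new $T$-letters, so after $n$ steps one obtains $s = r \cdot u$ with $r \in R$ and $u \in T^1$ a product of $O(n)$ $T$-letters drawn from the closure $F^*$ of $F_0$ under the pushing operation $(t,r) \mapsto \sigma$.

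The main obstacle is verifying that $F^*$ is finite: this is what forces the $T$-letters appearing in $u$ to come from a bounded set. Granted this, setting $M' = \max_{f \in F^*}\ell_B(f) < \infty$, the word $u$ has $B$-length at most $M' \cdot O(n) = O(n)$, so $u \in \outball_B(1,Mn)$ for a suitable absolute $M$, and the required inclusion follows. The finiteness of $F^*$ should follow from the hypothesis $S^1 = RT^1$ with $R$ finite: each new element of $F^*$ arises as the $T$-component of $tr$ for $t \in F^*$ and $r \in R$, so the finiteness of $R$ together with the structural constraint from $S^1 = RT^1$ should force the iteration $(t,r) \mapsto \sigma$ to stabilize at a finite set. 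Formalizing this stabilization is the most delicate part of the argument, but once established the growth inequality drops out of a count of the injection $s \mapsto (r_s, t_s)$ from $\outball_A(1,n)$ into $R \times \outball_B(1,Mn)$.
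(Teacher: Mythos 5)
Your parts (a) and (b) are fine and agree with the paper. The gap is in (c), and you have put your finger on it yourself: everything rests on the finiteness of the closure $F^*$ of $F_0$ under the operation $(t,r) \mapsto \sigma$, where $tr = \rho\sigma$ with $\rho \in R$, $\sigma \in T^1$. You assert that this ``should follow'' from $S^1 = RT^1$ with $R$ finite, but no argument is given, and there is no reason to expect it. The trouble is built into your rewriting scheme: after expanding each $a_i$ as $r_{a_i}t_{a_i}$ and starting to push $R$-letters leftward, you must decompose products $tr$ in which $t$ is an \emph{accumulated} element of $T$ (a previously produced $\sigma$), not a member of any fixed finite set. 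Each such decomposition is free to produce a fresh $T$-component, and iterating this can escape every finite set. Without a proof that $F^*$ is finite, there is no finite $M$ with $u \in \outball_B(1,Mn)$, and the inequality $g_S(n) \leq |R|\,g_T(Mn)$ is not obtained.

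The paper avoids the issue by a different rewriting, in the spirit of the Dual Rewriting Lemma (Lemma~\ref{lemma_bookkeeping}'). With $A = B \cup R$ as the generating set, fix for each pair $a_1,a_2 \in A$ a decomposition $a_1 a_2 = r(a_1,a_2)\,\mu(a_1,a_2)$ with $r(a_1,a_2) \in R$, $\mu(a_1,a_2) \in T^1$. Since $R \subseteq A$, this in particular decomposes every $ar$ with $a \in A$, $r \in R$. Now rewrite $a_1\cdots a_k$ by sweeping a \emph{single} representative from $R$ through the word from right to left: at each step the product being decomposed is $a_i r$ with $a_i$ an original generator from $A$ and $r \in R$ the current representative, so the $T$-piece emitted always lies in the fixed finite set $\{\mu(a,a') : a,a' \in A\}$. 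This gives $a_1\cdots a_k = r\,\mu_1\cdots\mu_k$ directly, hence $\outball_A(1,n) \subseteq \bigcup_{r\in R}\outball_B(r,k_2n)$ with $k_2 = \max_{a,a'} l_B(\mu(a,a'))$, and the growth inequality follows with no closure step at all. The essential difference is that the representative is only ever multiplied against an original generator, never against an accumulated $T$-element; replacing your ``expand then collect'' scheme by this single-representative sweep repairs the argument.
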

\begin{proof}
Let $B \subseteq T$ be a finite generating set for $T$ and define $A = B \cup R$ which is clearly a finite generating set for $S$.
For $t \in T$ let $l_B(t)$ be the shortest length of a word in $B^+$ representing $t$ (i.e. the length of the element $t$ with respect to $B$).
Now $g_T \preccurlyeq g_S$ since $T \leq S$ so we just have to prove $g_S \preccurlyeq g_T$.

As in Lemma~\ref{lem_sigmaandtau}, for all $a_1, a_2 \in A$ there exists $r = r(a_1, a_2) \in R$ and $\mu(a_1, a_2) \in T^1$ satisfying:
\begin{equation}
a_1 a_2 = r(a_1, a_2) \mu(a_1, a_2). \label{mix}
\end{equation}
We claim that with $k_1 = |R|$ and $k_2 = \max\{ l_B(\mu(a_1,a_2)) :a_1, a_2 \in A  \}$ we have
\[
g_S(n) \leq k_1 g_T(k_2n)
\]
for all $n \in \mathbb{N}$. Indeed, applying \eqref{mix}, given any word $a_1 \ldots a_k \in A^+$ there exists $r \in R$ and $\mu_i \in \{ \mu(a_1,a_2)) :a_1, a_2 \in A \}$ ($i \in \{1,\ldots,k\}$) with:
\[
a_1 \ldots a_k = r \mu_1 \ldots \mu_k.
\]
(This is proved in much the same way as the first part of Lemma~\ref{lemma_bookkeeping}.)
For all $i = 1, \ldots, k$ we have $\mu_i \in B^+$ and $l_B(\mu_i) \leq k_2$.
It follows that for all $n \in \mathbb{N}$:
\begin{equation}
\outball_A (1,n)
\subseteq
\bigcup_{r \in R}
\outball_B (r,k_2 n).
\label{balls_eq}
\end{equation} 
But for all $s \in S$ and $m \in \mathbb{N}$ clearly we have:
\[
| \outball_B (s,m) | \leq | \outball_B (1,m ) |.
\]
Therefore by \eqref{balls_eq}:
\[
g_S(n) =
|\outball_A(a,n) \leq
|R||\outball_B(1,k_2n)| = 
k_1 g_T (k_2 n),
\]
for all $n \in \mathbb{N}$. 
\end{proof}

\begin{corollary}
Let $S$ be a semigroup and let $T$ be a subsemigroup of finite Green index. 
Then $S$ is finitely generated if and only if $T$ is finitely generated, in which case $S$ and $T$ have the same type of growth. 
\end{corollary}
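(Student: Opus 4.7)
The first equivalence---that $S$ is finitely generated if and only if $T$ is---is immediate from Theorem~\ref{thm_main_fg}. It therefore remains to show that when both are finitely generated, they have the same growth type. My plan is to reduce this to the preceding Proposition by exhibiting a finite set $R \subseteq S^1$ with $1 \in R$ and $S^1 = RT^1$.

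To build $R$, let $\{H_i : i \in I\}$ be the relative $\gh$-classes in $S \setminus T$, with a fixed representative $h_i \in H_i$ for each $i$; finite Green index says precisely that $I$ is finite, so
\[
R = \{1\} \cup \{h_i : i \in I\}
\]
is a finite subset of $S^1$ containing $1$. I would then verify $S^1 = RT^1$ in three cases: $1 = 1 \cdot 1$; every $t \in T$ is $1 \cdot t$; and for $s \in S \setminus T$, $s$ lies in some $H_i$, so $s \gr^T h_i$, i.e.\ $sT^1 = h_iT^1$, which yields $s = h_i t$ for some $t \in T^1$. Applying the preceding Proposition then completes the proof.

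There is no real obstacle here: the corollary is a direct consequence of Theorem~\ref{thm_main_fg} combined with the preceding Proposition, and the only substantive step---the verification $S^1 = RT^1$---is immediate from the very definition of $\gr^T$. I would also note in passing that the growth comparison itself uses only finiteness of the set of relative $\gr^T$-classes in $S \setminus T$, which is formally weaker than finite Green index; finite Green index is needed in the first place to invoke Theorem~\ref{thm_main_fg} and conclude that finite generation passes between $S$ and $T$.
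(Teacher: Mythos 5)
Your proposal is correct and matches the (implicit) proof in the paper: the corollary follows directly from Theorem~\ref{thm_main_fg} together with the preceding proposition, applied with $R=\{1\}\cup\{h_i : i\in I\}$, and the verification $S^1=RT^1$ is exactly the unwinding of the definition of $\gr^T$ that you give. Your side remark that the growth comparison only needs finitely many $\gr^T$-classes, while Theorem~\ref{thm_main_fg} requires full finite Green index, is also accurate and consistent with the example following Theorem~\ref{thm_finitegeneration}.
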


\section{Automaticity}
\label{secautomatic}

In this section we apply our results concerning generators and rewriting
to investigate how the property of being automatic behaves with respect to finite Green index subsemigroups.
In what follows we will give a very rapid summary of the basic definitions; for a better paced introduction we
refer the reader to
\cite{campbell_autsg}, \cite{hoffmann_relatives}, or
\cite{c_phdthesis}.

Following \cite{epstein_wordproc}, and unlike the previous sections, throughout this section
we will make a strict distinction between a word over an alphabet and the element of the semigroup this word represents. Let $A$ be an alphabet representing a generating set for a
semigroup $S$. If $w$ is a word in $A^+$, it represents an element
$\elt{w}$ in $S$. If $K \subseteq A^+$, then $\elt{K}$ denotes the set
of elements of $S$ represented by at least one word in $K$.

Now suppose $A$ and $B$ are two alphabets, and let $\$$ be a symbol belonging to neither.
Let
$C = \{(a,b) : a,b \in A \cup \{\$\}\} - \{(\$,\$)\}$ be a new
alphabet. Define the mapping $\delta : A^+ \times A^+ \to C^+$
by
\[
(u_1\cdots u_m,v_1\cdots v_n) \mapsto
\begin{cases}
(u_1,v_1)\cdots(u_m,v_n) & \text{if }m=n,\\
(u_1,v_1)\cdots(u_n,v_n)(u_{n+1},\$)\cdots(u_m,\$) & \text{if }m>n,\\
(u_1,v_1)\cdots(u_m,v_m)(\$,v_{m+1})\cdots(\$,v_n) & \text{if }m<n,
\end{cases}
\]
where $u_i\in A$, $v_i \in B$.

Suppose now that $L$ is a regular language over $A$ such that
$\elt{L} = S$. For any $w \in A^*$, define the relation
\[
L_w = \{(u,v) : u,v \in L, \elt{uw} = \elt{v}\}.
\]

The pair $(A,L)$ forms an \defterm{automatic structure} for $S$ if the
language $L_a\delta$ is regular for each $a \in A \cup
\{\emptyword\}$. An \defterm{automatic semigroup} is a semigroup that
admits an automatic structure.

Our main result for this section is:

\begin{theorem}
\label{thm:fgiautdown}
Let $S$ be a semigroup and let $T$ be a subsemigroup of $S$ of finite Green index. If $S$ is automatic, then $T$ is automatic.
\end{theorem}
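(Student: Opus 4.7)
The plan is to choose an automatic structure for $S$ over a generating set that includes generators of $T$, restrict to the sublanguage representing $T$, and push it through the two-pass rewriting of Theorem~\ref{thm_finitegeneration} to obtain a candidate automatic structure for $T$. By Theorem~\ref{thm_main_fg}, $T$ is finitely generated; pick a finite generating set $B$ of $T$, and take $A = B \cup \{d_i : i \in I\}$ as a generating set for $S$, where $d_i$ represents the fixed $\gh^T$-class representative $h_i$. Fix an automatic structure $(A, L)$ for $S$, which may be taken to have the uniqueness property (so that $\overline{\cdot} : L \to S$ is a bijection).

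The function $w \mapsto \rho(w, 1) \in I^1$ is computed by a right-to-left finite automaton with state set $I^1$, initial state $1$, and transition $(i, a) \mapsto \rho(a, i)$, so by Lemma~\ref{lemma_bookkeeping}'(ii) the sublanguage $L_T := \{w \in L : \overline{w} \in T\}$ is regular. Now define a rational transduction $\Phi : A^+ \to B^+$ implementing the two-pass rewriting of Theorem~\ref{thm_finitegeneration}: fix finitely many words $\widehat{\sigma}(a, i), \widehat{\tau}(j, \widehat{\sigma}(a, i)) \in B^*$ representing the corresponding elements of $T^1$, run a right-to-left transducer tracking $i_k$ that emits $\widehat{\sigma}(a_k, i_k)$, then a left-to-right transducer tracking $j_k$ that wraps each emitted block inside $\widehat{\tau}(j_k, \cdot)$. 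Setting $L' := \Phi(L_T) \subseteq B^+$, closure of regular languages under rational transductions yields that $L'$ is regular, and the computation from Theorem~\ref{thm_finitegeneration} guarantees $\overline{L'} = T$. Uniqueness of $L$ implies $\Phi : L_T \to L'$ is a bijection.

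The remaining (and hardest) step is to verify that, for each $b \in B \cup \{\varepsilon\}$, the multiplier $M_b^T := \{(u, v) : u, v \in L',\ \overline{ub} = \overline{v}\}$ gives a regular language under the pairing $\delta$. By the bijectivity of $\Phi$ and the identity $\overline{\Phi(u_0)} = \overline{u_0}$, one has $M_b^T = (\Phi \times \Phi)(M_b^S \cap (L_T \times L_T))$, and $M_b^S \cap (L_T \times L_T)$ has regular $\delta$-image in the paired alphabet of $A$. The main obstacle is transporting this regularity through the componentwise rewriting: $\Phi$ is not length-preserving (though $|\Phi(u)| \leq C |u|$ for some constant $C$ depending only on $A$ and $I$), so the $\delta$-pairing for $A$ does not map cleanly to the $\delta$-pairing for $B$. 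The plan is to realise the componentwise $\Phi$ as a single transducer on $\delta$-paired inputs that decodes each track, applies $\Phi$ separately, and re-encodes with $\$$-padding, using a bounded buffer per track to manage length mismatches; by closure of regular relations under such rational operations, $M_b^T \delta$ is regular, completing the proof that $(B, L')$ is an automatic structure for $T$.
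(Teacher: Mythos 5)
Your plan agrees with the paper's proof at the level of strategy --- start from an automatic structure $(A,L)$ for $S$, cut out the regular sublanguage $L_T$ of words representing elements of $T$ (your argument for its regularity via Lemma~\ref{lemma_bookkeeping}$'$(ii) is correct), apply the two-pass rewriting of Theorem~\ref{thm_finitegeneration}, and transport the multiplier languages along the rewriting. But there is a genuine gap at the step you yourself flag as the hardest: the rewriting map $\Phi$ you define is not length-preserving, and the bounded-buffer device you propose cannot repair this.

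Concretely, your $\Phi$ replaces each letter $a_k\in A$ by a word $\widehat{\tau}(j_k,\widehat{\sigma}(a_k,i_k))\in B^*$ whose length varies with the letter and its subscripts. For $(u,v)\in M_b^S$ one does know that $\bigl||u|-|v|\bigr|$ is bounded (pump on the $\$$-padded tail of $M_b^S\delta$). But $\bigl||\Phi(u)|-|\Phi(v)|\bigr|$ can grow linearly in $|u|$: if the letters of $u$ consistently expand to blocks of length two while those of $v$ expand to blocks of length one, the misalignment accumulates without bound. A synchronous two-tape automaton reading $(\Phi(u),\Phi(v))\delta$ would then need an unbounded buffer, so the asserted closure of $\delta$-regular relations under $\Phi\times\Phi$ fails. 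This is not a presentational issue but a real obstruction; it is connected to the fact that automaticity of semigroups is in general sensitive to the choice of finite generating set, so one cannot freely push an automatic structure through a non-length-preserving rewriting.

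The paper's proof sidesteps this with a different choice of target alphabet. Instead of fixing a generating set $B$ of $T$ in advance and writing each $\tau(j,\sigma(\overline{a},i))$ as a word over $B$, it introduces a fresh alphabet
\[
B=\{\,b_{j,a,i}:i,j\in I^1,\ a\in A\,\},\qquad \overline{b_{j,a,i}}=\tau(j,\sigma(\overline{a},i)),
\]
with one new letter per triple $(j,a,i)$. The rewriting then becomes a relation $R\subseteq A^+\times B^+$ pairing a word of length $n$ with a word of length $n$; it is length-preserving, hence synchronous, and its $\delta$-image is regular because the rewriting conditions are finitely many local checks requiring only the subscripts of the previously read letter. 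With $R$ length-preserving and $\delta$-regular, $M=\{v:(\exists u\in L)\,(u,v)\in R\}$ is regular, and $M_b=R^{-1}\circ L_w\circ R$ remains $\delta$-regular, yielding an automatic structure $(B,M)$ for $T$. Your argument would go through if you replaced your fixed generating set of $T$ with this triple-indexed alphabet so that $\Phi$ becomes a letter-to-letter substitution.
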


\begin{proof}
Suppose that $S$ admits an automatic structure  $(A,L)$.
All the notation fixed in Section \ref{sec_rewriting} will remain in force throughout this proof.
The goal is
to construct an automatic structure for $T$. 
The proof is based on the rewriting
technique given in Lemma~\ref{lemma_bookkeeping} and Theorem~\ref{thm_finitegeneration} above.

In Theorem ~\ref{thm_finitegeneration} we proved that the set
\begin{equation}
\label{eqaut1}
\{  
\tau(i, \sigma(\elt{a},j)) : i,j \in I^1, \; a \in A
\}
\end{equation}
generates $T$.
More precisely, we proved that an element $\elt{a_1}\, \elt{a_2}\ldots \elt{a_n}\in T$, where $a_i\in A$,
can be re-written as
$$
\elt{a_1}\, \elt{a_2}\ldots \elt{a_n} = 
\tau(j_1, \sigma(\elt{a_1}, i_1)) \tau(j_2, \sigma(\elt{a_2}, i_2)) \ldots \tau(j_n, \sigma(\elt{a_n}, i_n)),
$$
where the indices $i_k$, $j_k$ are computed by the following recursion:
\begin{enumerate}[(i)] 
\item $i_n = 1$,
\item $i_{k-1} = \rho(\elt{a_k},i_k)$ for $k =n,n-1,\ldots,2$,
\item $j_1 = \rho(\elt{a_1},i_1)$,
\item $j_{l+1} = \lambda(j_l,\sigma(\elt{a_l},j_l))$ for $l=1,2,\ldots,n-1$,
\item $\lambda(j_n,\sigma(\elt{a_n},i_n)) = 1$.
\end{enumerate}

Let us introduce a new alphabet representing the elements of (\ref{eqaut1}):
$$
B=\{b_{j,a,i}\::\: i,j\in I^1,\ a\in A\},\ \elt{b_{j,a,i}}=\tau(j, \sigma(\elt{a},i)).
$$
Let $R \subseteq A^+ \times B^+$ be the relation consisting of pairs of strings
\begin{equation}
\label{eq:transrel}
(a_1a_2\cdots a_n,b_{j_1,a_1,i_1}b_{j_2,a_2,i_2}\cdots b_{j_n,a_n,i_n})
\end{equation}
such that the properties (i)--(v) above are satisfied.
Notice in particular the correspondence between the letters of $a_i$
and the middle subscripts of the letters $b_{j_k,a_k,i_k}$ in
\eqref{eq:transrel}. It is clear that the set of pairs of \emph{all} strings
\eqref{eq:transrel} -- or rather the image of this under $\delta$ --
is regular. An automaton recognizing this set can easily be adapted to
check the properties (i)--(v): conditions~(i), (iii), and~(v) are all single
`local' checks, and conditions~(ii) and~(iv) require only that the automaton
store the subscripts from the previously read letter of $B$. Thus the
language $R\delta$ is regular.

We now have:
\begin{enumerate}[(i)]
\setcounter{enumi}{5}
\item If $u \in A^+$ represents an element of $T$, then there is a unique string $v \in B^+$ with $(u,v) \in R$ and $\elt{u} = \elt{v}$.
\item If $v = b_{j_1,a_1,i_1}b_{j_2,a_2,i_2}\cdots b_{j_n,a_n,i_n} \in B^+$ satisfies conditions (i)--(v), then there is a unique string $u \in A^+$ with $(u,v) \in R$.
\item If $(u,v) \in R$ then $\elt{u} = \elt{v}$ and so $\elt{u} \in T$.
\end{enumerate}

Let $M = \{v \in B^+ : (\exists u \in L)((u,v) \in R)\}$. The aim is
now to show that $(B,M)$ is an automatic structure for $T$. Clearly, the language $M$ maps onto $T$.

Let $b \in B$. Let $w \in A^+$ be such that $\elt{w} = \elt{b}$. The
language $L_w\delta$ is regular by
\cite[Proposition~3.2]{campbell_autsg}.
The language $(R^{-1} \circ L_w \circ R)\delta$ is thus also regular
and
\begin{align*}
&\;\phantom{\iff}\;(u,v) \in R^{-1} \circ L_w \circ R\\
&\iff u,v \in M \land (\exists p,q \in L)((u,p) \in R^{-1} \land (p,q) \in L_w \land (q,v) \in R)\\
&\iff u,v \in M \land (\exists p,q \in L)(\elt{p} = \elt{u} \land \elt{q} = \elt{v} \land \elt{pw} = \elt{q})\\
&\iff u,v \in M \land (\exists p,q \in L)(\elt{p} = \elt{u}  \land \elt{q} = \elt{v} \land \elt{u}\,\elt{w} = \elt{v})\\
&\iff u,v \in M \land \elt{u}\,\elt{w} = \elt{v}\\
&\iff u,v \in M \land \elt{u}\,\elt{b} = \elt{v}\\
&\iff (u,v) \in M_b.
\end{align*}
Thus $M_b = R^{-1} \circ L \circ R$. So $M_b\delta$ is regular
and so $(B,M)$ is an
automatic structure
for $T$.
\end{proof}

This theorem provides a common generalisation of the corresponding group theoretic result
 \cite[Theorem~4.1.4]{epstein_wordproc}
(without relying on the geometric `fellow traveller' property) and
\cite[Theorem~1.1]{hoffmann_autofinrees} for Rees index.

A variation of the notion of automatic semigroup is that of an asynchronously automatic semigroup.
Here we require that each relation $L_a$ (for $a\in A\cup\{\epsilon\})$ is recognised by an asynchronous two-tape automaton;
see \cite[Definition 3.3]{hoffmann_relatives} for details.
The proof of Theorem \ref{thm:fgiautdown} carries over verbatim to the asynchrnous case; the reference to
\cite[Proposition~3.2]{campbell_autsg} should be replaced by \cite[Proposition~2.1(3)]{hoffmann_relatives}.
Thus we have:

\begin{theorem}
\label{thm:fgiautdown1}
Let $S$ be a semigroup and let $T$ be a subsemigroup of $S$ of finite Green index. If $S$ is asynchronously automatic, then $T$ is asynchronously automatic.
\end{theorem}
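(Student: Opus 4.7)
The plan is to mimic the proof of Theorem~\ref{thm:fgiautdown} verbatim, replacing only the single closure property for regular word relations that is used. Starting from an asynchronous automatic structure $(A,L)$ for $S$, I would construct the same generating set
\[
B = \{b_{j,a,i} : i,j \in I^1,\ a \in A\},\quad \elt{b_{j,a,i}} = \tau(j,\sigma(\elt{a},i))
\]
for $T$ (justified by Theorem~\ref{thm_finitegeneration}), and the same relation $R \subseteq A^+ \times B^+$ consisting of pairs \eqref{eq:transrel} subject to the recursive indexing conditions (i)--(v). Since this relation is length-preserving and encodes the indices $i_k$, $j_k$ computed by a two-way pass that only requires bounded local memory, the argument that $R\delta$ is regular goes through without change (it was never a claim about synchrony beyond the usual convention for $\delta$-padding).

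I would then take $M = \{v \in B^+ : (\exists u \in L)((u,v) \in R)\}$ as the candidate language of representatives for $T$. That $\elt{M} = T$ and that $M$ is regular follow exactly as in the synchronous proof: $M$ is the projection to the second coordinate of a regular relation intersected with the regular language $L$ on the first coordinate, and surjectivity onto $T$ uses properties (vi)--(viii) of $R$ carried over directly.

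The only point where the synchronous proof invoked something that does not hold in the asynchronous world is the regularity of $L_w\delta$, previously cited from \cite[Proposition~3.2]{campbell_autsg}. Replacing this with \cite[Proposition~2.1(3)]{hoffmann_relatives} gives that $L_w\delta$ is recognised by an asynchronous two-tape automaton for each $w \in A^*$ (in particular for a fixed $w$ with $\elt{w} = \elt{b}$). The same chain of equivalences
\[
M_b = R^{-1} \circ L_w \circ R
\]
then yields $M_b$ as a composition of two synchronous (hence also asynchronous) regular relations with one asynchronous regular relation, so that $M_b\delta$ is again asynchronously regular. This is where the main obstacle would lie: asynchronous regular relations are not in general closed under arbitrary composition, so one has to be careful that the outer relation $R$ is in fact a length-preserving (synchronous) rational transduction, which makes the composition safe. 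Since $R$ pairs each letter of $A$ with a single letter of $B$, this is immediate, and the argument for $(B,M)$ being an asynchronous automatic structure for $T$ is concluded exactly as before.
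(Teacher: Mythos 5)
Your proposal is correct and follows exactly the route the paper takes: the authors' own proof of Theorem~\ref{thm:fgiautdown1} simply states that the argument for Theorem~\ref{thm:fgiautdown} carries over verbatim once the citation of \cite[Proposition~3.2]{campbell_autsg} is replaced by \cite[Proposition~2.1(3)]{hoffmann_relatives}. Your additional observation about the composition $R^{-1}\circ L_w\circ R$ being safe because $R$ is length-preserving is a helpful clarification (and in fact rational relations are closed under composition in general, so the step is sound even without that restriction), but it is expansion rather than a deviation from the paper's method.
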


The converses of Theorems~\ref{thm:fgiautdown} and \ref{thm:fgiautdown1} do not hold.
We demonstrate this by using the following example, which was introduced
in \cite[Example~5.1]{campbell_autcompsimple} for a different
purpose, viz., to show that a Clifford semigroup whose group
maximal subgroups are all automatic need not itself be automatic:

\begin{example}
Let $F$ be the free group on two generators $a,b$, and let $G$ be the free product
of two cyclic groups of order 2, i.e. $G=\langle c,d \:|\: c^2=d^2=1\rangle$.
Let $\phi:F\rightarrow G$ be the epimorphism defined by $a\mapsto c$, $b\mapsto d$.
Form the strong semilattice $S=\mathcal{S}(F,G,\phi)$.

Now, $F$, being a finitely
generated free group, is automatic. 
Furthermore, $F$ has finite Green index in $S$, with $G$ a unique $\gh$-class in $S\setminus F$. The Sch\"{u}tzenberger group of this $\gh$-class is $G$, and so is automatic.
But in \cite[Example~5.1]{campbell_autcompsimple} it is proved that $S$ is
not automatic. 
We will actually go further and show $S$ is not even
asynchronously automatic. 

Suppose for \textit{reductio ad absurdum} that $(A,L)$ is an
asynchronous automatic structure for $S$. Let $A_F = \{a \in A :
\elt{a} \in F\}$. Let $L_F = L \cap (A_F)^+$ and $L_G = L \setminus
L_F$. 
Then $\elt{L_G} = G$. Choose a representative $w \in L_G$ of
the identity $1_G$ of $G$. Construct the rational relation
$L_w$. Let $K = \{u : (u,w) \in L_w\}$; then $K$ is regular and
represents all those elements $s$ of $S$ with $s1_G = 1_G$. Therefore,
by the definition of $S$ we have
$\elt{K} = \{1_G\} \cup ( 1_G\phi^{-1})$.
Let $J = \{u : (u,w) \in L_\emptyword\}$. Then $J$ is regular and
consists of all elements of $L$ that represent $1_G$. Thus $K \setminus J$ is
regular and represents the kernel (in the group-theoretical sense) of
$\phi$. Thus this kernel, $\elt{K - J}$, is a rational subset of
the free group $F$. But it is thus a non-trivial normal rational
subgroup of infinite index in $F$, which is a contradiction by
\cite[Corollary~4]{frougny89} and \cite[Theorem~1]{karass57}.
\end{example}

\begin{remark}
There are other possible definitions of automaticity depending on which side generators act, and on which side the padding symbols are placed;
see \cite{hoffmann03}.
Straightforward modification of the above argument yields the corresponding results for each of them.
\end{remark}

\section*{Acknowledgements}

We thank Simon Craik and Victor Maltcev for useful discussions during the preparation of this paper. 

\bibliographystyle{abbrv}

\begin{thebibliography}{99}

\bibitem{Jean-Claude}
J.-C. Anscombre.
\newblock Sur une extension du lemme de {G}reen.
\newblock {\em Atti Accad. Naz. Lincei Rend. Cl. Sci. Fis. Mat. Natur. (8)},
  55:650--656 (1974), 1973.

\bibitem{Araujo2001}
I.~M. Ara{\'u}jo, M.~J.~J.~Branco, V. H. Fernandes, G. M. Gomes, N. Ru{\v{s}}kuc.
\newblock On generators and relations for unions of semigroups
\newblock {\em Semigroup Forum}, 63(1):49--62, 2001.

\bibitem{inverserees}
I.~M. Ara\'{u}jo, N. Ru\v{s}kuc and P.~V. Silva, Presentations for inverse subsemigroups with
finite complement, submitted.

\bibitem{Behrstock1}
J. Behrstock, and D. Margalit.
\newblock Curve complexes and finite index subgroups of mapping class
              groups.
\newblock {\em Geom. Dedicata}, 118:71--85, 2006.

\bibitem{Bergman}
G.~M. Bergman.
\newblock The index of a group in a semigroup.
\newblock {\em Pacific J. Math.}, 36:55--62, 1971.

\bibitem{Brown1982}
K.~S. Brown.
\newblock {\em Cohomology of groups}, volume~87 of {\em Graduate Texts in
  Mathematics}.
\newblock Springer-Verlag, New York, 1982.

\bibitem{Burillo1}
J. Burillo, S. Cleary, and C. E. R{\"o}ver.
\newblock Commensurations and subgroups of finite index of {T}hompson's
              group {$F$}.
\newblock {\em Geom. Topol.}, 12(3):1701--1709, 2008.

\bibitem{c_phdthesis}
A.~J. Cain.
\newblock {\em Presentations for Subsemigroups of Groups}.
\newblock {Ph.D.\ Thesis}, University of St~Andrews, 2005.

\bibitem{c_survey}
A.~J. Cain.
\newblock Malcev presentations for subsemigroups of groups---a survey.
\newblock In C. M Campbell, M. R. Quick, E. F. Robertson, and G. C. Smith, editors, {\em Groups St. Andrews 2005. Vol. 1}, pages 256--268. London Math. Soc. Lecture Note Ser., 339, Cambridge Univ. Press, Cambridge, 2007. 

\bibitem{crr_finind}
A.~J. Cain, E.~F. Robertson, and N. Ru{\v{s}}kuc.
\newblock Cancellative and {M}alcev presentations for finite {R}ees index subsemigroups and extensions.
\newblock {\em J. Aust. Math. Soc.}, 84(1):39--61, 2008.

\bibitem{Campbell1996}
C.~M. Campbell, E.~F. Robertson, N. Ru{\v{s}}kuc, and R.~M. Thomas.
\newblock On subsemigroups of finitely presented semigroups.
\newblock {\em J. Algebra}, 180(1):1--21, 1996.

\bibitem{campbell_autsg}
C.~M. Campbell, E.~F. Robertson, N. Ru{\v{s}}kuc, and R.~M. Thomas.
\newblock Automatic semigroups.
\newblock {\em Theoret. Comput. Sci.}, 250(1--2):365--391, 2001.

\bibitem{campbell_autcompsimple}
C.~M. Campbell, E.~F. Robertson, N. Ru{\v{s}}kuc, and R.~M. Thomas.
\newblock Automatic completely simple semigroups.
\newblock {\em Acta Math. Hungar.}, 95(3):201--215, 2002.

\bibitem{Clark1}
C.~E. Clark and J.~H. Carruth.
\newblock Generalized {G}reen's theories.
\newblock {\em Semigroup Forum}, 20(2):95--127, 1980.

\bibitem{clifford_semigroups2}
A.~H. Clifford and G.~B. Preston.
\newblock {\em {The Algebraic Theory of Semigroups {\rm (Vol.~II)}}}.
\newblock Number~7 in Mathematical Surveys. American Mathematical Society, Providence, R.I., 1967.

\bibitem{delaHarpe2000}
P.~de~la Harpe.
\newblock {\em Topics in geometric group theory}.
\newblock Chicago Lectures in Mathematics. University of Chicago Press,
  Chicago, IL, 2000.

\bibitem{epstein_wordproc}
D.~B. A. Epstein, J.~W. Cannon, D.~F. Holt, S.~V. F. Levy, M.~S. Paterson, and W.~P. Thurston.
\newblock {\em {Word Processing in Groups}}.
\newblock Jones \& Bartlett, Boston, Mass., 1992.

\bibitem{frougny89}
C. Frougny, J. Sakarovitch, and P.E. Schupp,
Finiteness conditions on subgroups and formal language theory, 
{\em Proc. London Math. Soc.} 58: 74--88, 1989.

\bibitem{gray_green1}
R. Gray and N. Ru\v{s}kuc.
\newblock Green index and finiteness conditions for semigroups.
\newblock {\em J. Algebra}, 320(8):3145--3164, 2008.

\bibitem{GrayRuskucResFin}
R. Gray and N. Ru\v{s}kuc.
\newblock On Residual Finiteness of Monoids, their Schutzenberger Groups and Associated Actions.
\newblock {\em (submitted)} 2009.

\bibitem{Green1951}
J.~A. Green.
\newblock On the structure of semigroups.
\newblock {\em Ann. of Math. (2)}, 54:163--172, 1951.

\bibitem{Grigorchuk1}
R. I. Grigorchuk.
\newblock Semigroups with cancellations of degree growth.
\newblock {\em Mat. Zametki}, 43:305--319, 428, 1988.

\bibitem{Haglund1}
F. Haglund.
\newblock Finite index subgroups of graph products.
\newblock {\em Geom. Dedicata}, 135:167--209, 2008.

\bibitem{hoffmann_relatives}
M. Hoffmann, D. Kuske, F. Otto, and R.~M. Thomas.
\newblock Some relatives of automatic and hyperbolic groups.
\newblock In G.~M. S. Gomes, J.~{\'{E}}. Pin, and P.~V. Silva, editors, {\em Semigroups, Algorithms, Automata and Languages                   ({C}oimbra, 2001)}, pages 379--406. World Scientific Publishing, River Edge, N.J., 2002.

\bibitem{hoffmann03}
M. Hoffmann and R.M. Thomas.
Notions of automaticity in semigroups.
\emph{Semigroup Forum} 66 (2003), 337--367. 

\bibitem{hoffmann_autofinrees}
M. Hoffmann, R.~M. Thomas, and N. Ru{\v{s}}kuc.
\newblock Automatic semigroups with subsemigroups of finite {R}ees index.
\newblock {\em Internat. J. Algebra Comput.}, 12(3):463--476, 2002.

\bibitem{howie_fundamentals}
J.~M. Howie.
\newblock {\em {Fundamentals of Semigroup Theory}}, volume~12 of {\em London Mathematical Society Monographs {\rm(New Series)}}.
\newblock Clarendon Press, Oxford University Press, New York, 1995.

\bibitem{Jura1}
A.~Jura.
\newblock Coset enumeration in a finitely presented semigroup.
\newblock {\em Canad. Math. Bull.}, 21(1):37--46, 1978.

\bibitem{Jura2}
A.~Jura.
\newblock Determining ideals of a given finite index in a finitely presented
  semigroup.
\newblock {\em Demonstratio Math.}, 11(3):813--827, 1978.

\bibitem{Jura3}
A.~Jura.
\newblock Some remarks on nonexistence of an algorithm for finding all ideals
  of a given finite index in a finitely presented semigroup.
\newblock {\em Demonstratio Math.}, 13(2):573--578, 1980.

\bibitem{karass57}
A. Karass and D. Solitar,
Note on a theorem of Schreier, {\em Proc. Amer. Math. Soc.} 8: 696--697, 1957.

\bibitem{Katsuya1}
E. Katsuya, and V. Matijevi{\'c}.
\newblock Finite index supergroups and subgroups of torsionfree abelian
              groups of rank two.
\newblock {\em J. Algebra}, 319(9):3567--3587, 2008.

\bibitem{Lallement1}
G.~Lallement.
\newblock {\em Semigroups and combinatorial applications}.
\newblock John Wiley \& Sons, New York-Chichester-Brisbane, 1979.
\newblock Pure and Applied Mathematics, A Wiley-Interscience Publication.

\bibitem{Linton1}
S.~A. Linton, G.~Pfeiffer, E.~F. Robertson, and N.~Ru{\v{s}}kuc.
\newblock Groups and actions in transformation semigroups.
\newblock {\em Math. Z.}, 228(3):435--450, 1998.

\bibitem{Linton2}
S.~A. Linton, G.~Pfeiffer, E.~F. Robertson, and N.~Ru{\v{s}}kuc.
\newblock Computing transformation semigroups.
\newblock {\em J. Symbolic Comput.}, 33(2):145--162, 2002.

\bibitem{lyndon_cgt}
R.~C. Lyndon and P.~E. Schupp.
\newblock {\em Combinatorial Group Theory}, volume~89 of {\em Ergebnisse der Mathematik und ihrer Grenzgebiete}.
\newblock Springer-Verlag, Berlin, 1977.

\bibitem{Magnus1}
Wilhelm Magnus, Abraham Karrass, and Donald Solitar.
\newblock {\em Combinatorial group theory}.
\newblock Dover Publications Inc., New York, revised edition, 1976.
\newblock Presentations of groups in terms of generators and relations.


\bibitem{Steinfeld}
L.~M{\'a}rki and O.~Steinfeld.
\newblock A generalization of {G}reen's relations in semigroups.
\newblock {\em Semigroup Forum}, 7(1-4):74--85, 1974.
\newblock Collection of articles dedicated to Alfred Hoblitzelle Clifford on
  the occasion of his 65th birthday and to Alexander Doniphan Wallace on the
  occasion of his 68th birthday.

\bibitem{Nikolaev1}
A. Nikolaev, and D. Serbin.
\newblock Finite index subgroups of fully residually free groups.
\newblock arXiv:0809.0938v1. 

\bibitem{Nikolov2003}
N. Nikolov, and D. Segal.
\newblock Finite index subgroups in profinite groups.
\newblock {\em C. R. Math. Acad. Sci. Paris}, 337(5):303--308, 2003.

\bibitem{Nikolov1}
N.~Nikolov.
\newblock On subgroups of finite index in positively finitely generated groups.
\newblock {\em Bull. London Math. Soc.}, 37(6):873--877, 2005.

\bibitem{Nikolov2007(1)}
N. Nikolov, and D. Segal.
\newblock On finitely generated profinite groups. I. Strong completeness and uniform bounds.
\newblock {\em Ann. of Math. (2)}, 165(1):171--238, 2007.

\bibitem{Nikolov2007(2)}
N. Nikolov, and D. Segal.
\newblock On finitely generated profinite groups. II. Products in quasisimple groups.
\newblock {\em Ann. of Math. (2)}, 165(1):239--273, 2007.

\bibitem{Pastijn}
F.~Pastijn.
\newblock A representation of a semigroup by a semigroup of matrices over a
  group with zero.
\newblock {\em Semigroup Forum}, 10(3):238--249, 1975.

\bibitem{Pride2000}
S. J. Pride, and J. Wang.
\newblock Subgroups of finite index in groups with finite complete rewriting systems.
\newblock {\em Proc. Edinburgh Math. Soc. (2)}, 43(1):177--183, 2000.

\bibitem{Wang2000}
X.~Wang, S. J.~Pride.
\newblock Second order Dehn functions of groups and monoids.
\newblock {\em Internat. J. Algebra Comput. }, 10(4):425--456, 2000.

\bibitem{Ruskuc1}
N.~Ru{\v{s}}kuc.
\newblock On large subsemigroups and finiteness conditions of semigroups.
\newblock {\em Proc. London Math. Soc. (3)}, 76(2):383--405, 1998.

\bibitem{Ruskuc2}
N.~Ru{\v{s}}kuc.
\newblock On finite presentability of monoids and their {S}ch\"utzenberger
  groups.
\newblock {\em Pacific J. Math.}, 195(2):487--509, 2000.

\bibitem{Ruskuc&Thomas}
N.~Ru{\v{s}}kuc and R.~M. Thomas.
\newblock Syntactic and {R}ees indices of subsemigroups.
\newblock {\em J. Algebra}, 205(2):435--450, 1998.

\bibitem{Schutzenberger1957}
M.~P. Sch{\"u}tzenberger.
\newblock {$\mathcal{D}$}-repr\'esentation des demi-groupes.
\newblock {\em C. R. Acad. Sci. Paris}, 244:1994--1996, 1957.

\bibitem{Schutzenberger1958}
M.~P. Sch{\"u}tzenberger.
\newblock Sur la repr\'esentation monomiale des demi-groupes.
\newblock {\em C. R. Acad. Sci. Paris}, 246:865--867, 1958.

\bibitem{Wallace}
A.~D. Wallace.
\newblock Relative ideals in semigroups. {II}. {T}he relations of {G}reen.
\newblock {\em Acta Math. Acad. Sci. Hungar}, 14:137--148, 1963.

\end{thebibliography}

\end{document}